\newcommand{\C} {\mathbb{C}}
\newcommand{\Q} {\mathbb{Q}}
\newcommand{\N}  {\mathbb{N}}
\newcommand{\Z}{\mathbb{Z}}
\newcommand{\OO}{\mathcal{O}}
\newcommand{\PP}{\mathbb{P}}
\newcommand{\NS}{\mathop{\rm NS}}
\newcommand{\Num}{\mathop{\rm Num}}
\newcommand{\MW}{\mathop{\rm MW}}
\newcommand{\MWL}{\mathop{\rm MWL}}
\newcommand{\Km}{\mathop{\rm Km}}
\newcommand{\disc}{\mathop{\rm disc}}
\newcommand{\Br}{\mathop{\rm Br}\nolimits}
\newcommand{\Aut}{\mathop{\rm Aut}}
\newcommand{\Triv}{\mathop{\rm Triv}}
\newcommand{\Ess}{\mathop{\rm Ess}}
\newcommand{\Inv}{\mathop{\rm Inv}}
\newcommand{\II}{\mathop{\rm II}}
\newcommand{\jj}{\jmath}
\newtheorem{Theorem}{Theorem}[section]
\newtheorem{Proposition}[Theorem]{Proposition}
\newtheorem{Lemma}[Theorem]{Lemma}
\newtheorem{Corollary}[Theorem]{Corollary}
\theoremstyle{remark}
\newtheorem{Remark}[Theorem]{Remark}
\newtheorem{Example}[Theorem]{Example}
\theoremstyle{definition}
\newtheorem{Definition}[Theorem]{Definition}
\begin{document}

\title{Enriques Surfaces and jacobian elliptic K3 surfaces}

\author{Klaus Hulek}
\address{Institut f\"ur Algebraische Geometrie, Leibniz Universit\"at
  Hannover, Welfengarten 1, 30167 Hannover, Germany} 
\email{hulek@math.uni-hannover.de}

\author{Matthias Sch\"utt}
\email{schuett@math.uni-hannover.de}

\subjclass[2000]{14J28; 14J27, 14J50, 14F22}

\keywords{Enriques surface, K3 surface, elliptic fibration, Mordell-Weil group, automorphism, Brauer group}

\thanks{Partial funding from DFG grant Hu 337/6-1 is gratefully acknowledged}

\dedicatory{Dedicated to Tetsuji Shioda on the occasion of his 70th birthday}

\date{March 18, 2010}

 \begin{abstract}
This paper proposes a new geometric construction of Enriques surfaces. Its starting point are K3 surfaces with jacobian elliptic fibration 
which arise from rational elliptic surfaces by a quadratic base change. 
The Enriques surfaces obtained in this way are characterised by elliptic fibrations with a rational curve as bisection which splits into two sections
on the covering K3 surface.
The construction has applications to the study of Enriques surfaces with specific automorphisms.
It also allows us to answer a question of Beauville about Enriques surfaces whose Brauer groups show an exceptional behaviour.
In a forthcoming paper, we will study arithmetic consequences of our construction.
 \end{abstract}
 
 \maketitle

\section{Introduction}

Enriques surfaces form an important class in the Enriques-Kodaira classification of complex algebraic surfaces.
They arise as quotients of K3 surfaces by a fixed point free involution.
They have been studied both by classical algebro-geometric methods, as well as lattice-theoretic techniques.
It is the main purpose of our paper to combine both approaches. In particular, we want to exhibit a geometric construction which allows us to 
exhibit Enriques involutions concretely and link this approach to lattice-theoretic results.

The main objective of this paper is to propose a new geometric construction of Enriques surfaces starting from K3 surfaces with 
a jacobian elliptic fibration. 
The K3 surfaces arise from rational elliptic surfaces by a quadratic base change and
admit Enriques involutions that can be described explicitly from the geometry.
The Enriques surfaces obtained are characterised as follows:
they possess elliptic fibrations with a rational curve as bisection which splits into two sections
on the covering K3 surface.
To the best of our knowledge, such a situation has so far only been investigated in the case
where the bisection is smooth, for so-called special Enriques surfaces.
For this special case our methods allow us to give an explicit rational description of a $136$-fold cover the moduli space of
special Enriques surfaces (Cor.~\ref{Cor:M=1}).

We point out a few advantages of our approach.
Most importantly, we found the construction very accessible,
even to explicit computations.
We will illustrate this by a number of examples which we hope to be of independent interest.
We will also relate the construction to specific families of Enriques surfaces that have been studied before in the context of special automorphisms.
Finally, we will use our technique to answer a question by Beauville in \cite{Beau} 
by constructing an infinite series of Enriques surfaces whose Brauer groups become trivial under the pull-back to the covering K3 surfaces.

Our original motivation to introduce this new construction was arithmetic in nature.
In a forthcoming paper \cite{HS}, we will employ the techniques developed in this paper
to derive many interesting arithmetic properties of Enriques surfaces, 
in particular about fields of definition and Galois actions on divisors.

A further feature of our construction is that it also works in positive characteristic,
although we will not exploit this property in detail here.

The paper is organised as follows:
Section \ref{s:prel} reviews basics about K3 surfaces and Enriques surfaces
as well as elliptic fibrations and Mordell-Weil lattices. 
The geometric heart of the paper is Section \ref{s:ell}.
It introduces the base change technique to produce interesting K3 surfaces with Enriques involutions
and illustrates the method with several examples.
The connection to automorphisms of Enriques surfaces is investigated in Section \ref{s:cti}.
Specifically we consider Enriques surfaces with cohomologically trivial involution 
after Mukai-Namikawa \cite{MN}
and with finite automorphism group after Kond\=o \cite{K-E}.
The paper concludes with a study of Enriques surfaces whose Brauer groups pull-back identically to zero on the covering K3 surfaces.
We exhibit an infinite number of examples abstractly and one example over $\Q$ explicitly,
thus answering a question by Beauville \cite{Beau}. We would like to point out that a different example of this 
kind was also found independently by Garbagnati and van Geemen \cite{GvG}.






\smallskip

{\fontsize{9pt}{11pt}\selectfont

 \noindent {\bfseries Acknowledgements}: We would like to thank our colleagues
Igor Dolgachev, Bert van Geemen, Shigeyuki Kond\=o, and Masato Kuwata for many discussions.
Special thanks to Valery Gritsenko for useful conversations on lattice embeddings and, in particular, for providing
lattice theoretic arguments used in the proof of Proposition \ref{Prop:M}.
We are grateful to the referee for many helpful comments.
This project was started when the second author held a position at University of Copenhagen.\par}

\section{K3 surfaces and Enriques surfaces}
\label{s:prel}

In this section, we recall the basics about K3 surfaces and Enriques surfaces.
As general references the reader might consult \cite{BHPV} or \cite{CD}.
Throughout this paper we consider smooth projective surfaces,
mainly over $\C$,
but most ideas work over arbitrary fields 
of characteristic other than two.
(In characteristic two the construction breaks down, and one even has to modify the following definition of Enriques surfaces).

\begin{Definition}  

\begin{enumerate}
\item
A smooth projective surface $X$ is called \emph{K3 surface} 
if $X$ is (algebraically) simply connected
with trivial canonical bundle $\omega_X\cong\OO_X$.
\item
An \emph{Enriques surface} $Y$ is a quotient of a K3 surface $X$ by a fixed point free involution $\tau$.
Such an involution is also called \emph{Enriques involution}.
\end{enumerate}
\end{Definition}

Before collecting some invariants of K3 surfaces and Enriques surfaces, let us give a few examples of relevance to this paper.

\begin{Example}
\label{Ex:4,4}
\hspace{3cm}
\begin{enumerate}
\item
\label{it:3}
Let $C$ denote a curve in $\PP^1\times \PP^1$ of bidegree $(4,4)$
with at worst isolated ordinary  rational double points as singularities (ADE).
Then the double cover of $\PP^1\times \PP^1$ branched along $C$ gives a K3 surface $X$
after resolving singularities if necessary.
\item
The \emph{Kummer surface} $\Km(A)$ of an abelian surface $A$ is a K3 surface.
\item
\label{it:5}
Assume that the curve $C$ in \eqref{it:3} is fixed under the involution $\sigma=(-1, -1)$ on $\PP^1\times \PP^1$.
If $C$ does not contain any fixed point of $\sigma$,
then $\sigma$ composed with the deck transformation of the double covering induces 
an Enriques involution $\tau$ on the K3 surface $X$.
Hence the quotient $Y=X/\tau$ is an Enriques surface.
\item
\label{it:6}
Every complex algebraic Kummer surface admits an Enriques involution by \cite{Keum}.
\end{enumerate}
\end{Example}

\subsection{K3 surfaces}
For a K3 surface $X$, we have $b_1(X)=q(X)=0$ and $h^{2,0}(X)=p_g(X)=1$.
Hence $\chi(\OO_X)=2$ and by Noether's formula the topological Euler-Poincar\'e characteristic $e(X)=24$.
Over $\C$, we deduce the following Hodge diamond by Poincar\'e duality:
$$
\begin{array}{ccccc}
&&1&&\\
& 0&& 0&\\
1&&20&&1\\
& 0 && 0 &\\
&&1&&
\end{array}
$$
The curves on a K3 surface $X$ generate 
the \emph{N\'eron-Severi group} $\NS(X)$ consisting of divisors up to algebraic equivalence 
(which agrees with numerical equivalence on K3 surfaces).
Its rank $\rho(X)$ is called the \emph{Picard number}.
The intersection pairing (which coincides in the complex case with the cup-product) equips $\NS(X)$ with the structure of an even integral lattice which,
by the Hodge index theorem, has signature $(1,\rho(X)-1)$.

%

On a complex K3 surface the
cup-product endows the cohomology group $H^2(X,\Z)$ with the structure of a lattice which is even, integral, non-degenerate and unimodular of signature $(3,19)$.
By lattice theory
\[
H^2(X,\Z) \cong 3U + 2E_8(-1) =  \Lambda 
\]
where $U$ denotes the hyperbolic plane and 
$E_8$ is the unique positive-definite unimodular even lattice of rank $8$.
The notation $E_8(-1)$ indicates that the form is multiplied by $-1$, i.e. $E_8(-1)$ is negative
definite.
The lattice $\Lambda$ is called the \emph{K3 lattice}.
The  N\'eron-Severi group $\NS(X)$ embeds primitively into $H^2(X,\Z)$ (as a lattice).
Conversely, let $L$ denote a lattice of rank $r\leq 20$ and signature $(1,r-1)$ 
admitting a primitive embedding into the K3 lattice $\Lambda$.
Moduli theory asserts that K3 surfaces with a primitive embedding $L\hookrightarrow H^2(X,\Z)$ containing an ample class
form a moduli space of dimension $20-r$.
This moduli space is globally irreducible 
if and only if the primitive embedding $L\hookrightarrow\Lambda$ is unique up to isometries.

\subsection{Enriques surfaces}
On a complex Enriques surface $Y$ we also have
$b_1(Y)=q(Y)=0$, but the 
fundamental group is not trivial:
\[
\pi_1(Y) = \Z/2\Z.
\]
The covering surface of the  universal covering
\[
\pi: X \to Y
\]
is a K3 surface.
Clearly $e(Y)=\frac 12 e(X)=12$.
By the holomorphic Lefschetz formula (over $\C$), one obtains
that any fixed point free involution $\tau$ on a K3 surface $X$ has to act as $-1$ on the $2$-form.
Thus  the quotient kills the $2$-form: 
\[
h^{2,0}(Y)=p_g(Y)=0.
\]
Hence all of $H^2(Y)$ is algebraic: $\rho(Y)=b_2(Y)=10$.
Algebraic and numerical equivalence of divisors do not coincide on an Enriques surface $Y$.
Indeed, there is two-torsion in $\NS(Y)$ represented by the canonical divisor $K_Y$.
On the other hand,
the quotient $\NS(Y)_f$ of $\NS(Y)$ by its torsion subgroup 
is again an even unimodular lattice, which
is isomorphic to the so-called  \emph{Enriques lattice}:
\[
\Num(Y) = \NS(Y)_f \cong U+E_8(-1).
\]
Via pull-back under the universal covering, this lattice embeds primitively into $\NS(X)$.
Here the intersection form is multiplied by two, as indicated by the brackets:
\begin{eqnarray}\label{eq:U(2)}
U(2) + E_8(-2) \hookrightarrow \NS(X).
\end{eqnarray}
Conversely, we can pursue the following approach over $\C$:
study lattice-polarised K3 surfaces with a primitive embedding \eqref{eq:U(2)}
(containing an ample class).
As explained before, such complex K3 surfaces form a ten-dimensional moduli space
that is in fact irreducible (the embedding of $U(2)+E_8(-2)$ into $\Lambda$ is unique up to isometries because the embedded Enriques lattice is $2$-elementary).
Thanks to the Torelli theorem,
one can show the following for a complex K3 surface $X$ admitting a primitive embedding \eqref{eq:U(2)}:
$X$ admits an Enriques involution
if and only if there are no $(-2)$-curves in the (negative-definite) complement of the Enriques lattice inside $\NS(X)$:
\[
D\in (U(2)+E_8(-2))^\bot\subset\NS(X) \Rightarrow D^2<-2.
\]
In consequence, one can study moduli of Enriques surfaces through the moduli space 
of lattice-polarised K3 surfaces
after removing the discriminant locus corresponding to those K3 surfaces 
where the above condition is not met.

So far,
Enriques surfaces have mainly been  studied  from the lattice theoretic viewpoint
sketched above.
This paper proposes a new geometric construction 
which allows us to make many properties explicit and to work with precise models.
As a  further advantage, the construction also works in odd positive characteristic.
One of the key ingredients are elliptic fibrations.

\subsection{Elliptic fibrations}
\label{ss:ell}

It is a special feature of K3 surfaces and Enriques surfaces that a single surface may admit several distinct elliptic fibrations.
Here an elliptic fibration is a surjective morphism onto $\PP^1$ such that the general fibre is an elliptic curve.
Necessarily the singular fibres are finite in number.
Their Euler-Poincar\'e characterictics add up to the Euler-Poincar\'e characteristic of the surface.
The possible fibre types have been classified in the complex case by Kodaira  \cite{K} according to the local monodromy.
The classification of singular fibres in the general case is due to Tate \cite{Tate}.

To exhibit an elliptic fibration on a K3 surface,
it suffices to find a divisor $D\neq 0$ of self-intersection $D^2=0$.
Then $D$ or $-D$ is effective by Riemann-Roch.
After subtracting the base locus, the linear system of the resulting effective divisor
gives an elliptic fibration (see \cite{PSS}).
Moreover if the divisor $D$ has the Kodaira type of a singular fibre,
then it necessarily appears as a singular fibre of the given elliptic fibration.

Specifically, one often asks for elliptic fibrations $f: S \to C$ with section, 
i.e.~ with a morphism $O$ from the base curve $C$ to the surface $S$
such that
\[
O: C \to S,\;\;\; f\circ O = \mbox{id}_C.
\]
Such fibrations are also called \emph{jacobian}.
They are particularly nice to work with for several reasons.
For instance one has a Weierstrass form 
and there is a canonical way to compute the Picard number
because the sections themselves form a group, the so-called Mordell-Weil group $\MW(S)$.
This group can be analysed by endowing it (up to torsion) with a lattice structure.
This leads to the notion of Mordell-Weil lattices that we will briefly review in \ref{ss:MWL}.


We now investigate the cases of elliptic K3 surfaces and Enriques surfaces.
Let $X$ be a K3 surface with an elliptic fibration specified by an effective divisor $D$ as above.
Then a section is the same as an irreducible curve $C\subset X$ with $C\cdot D=1$.
Since $C^2$ is even for any curve on a K3 surface by the adjunction formula,
the existence of a section implies that the hyperbolic plane $U$ embeds into $\NS(X)$.
In particular $\rho(X)\geq 2$,
and often one starts by looking for a summand $U$ of $\NS(X)$
in order to exhibit an elliptic fibration with section
(see \ref{ss:MWL} for encoded information about singular fibres and sections).
Note that a general algebraic K3 surface does not have an elliptic fibration (since $\rho= 1$).

In contrast, the Enriques lattice does always contain the hyperbolic plane;
in particular, there is a divisor $D\in\NS(Y)$ with $D^2=0$.
It follows that either $\pm D$ or $\pm2D$ induces an elliptic fibration on $Y$.
Here the factor of two comes into play since every elliptic fibration on an Enriques surface 
has exactly two fibres of multiplicity two.
The canonical divisor can be represented as the difference of the supports of the multiple fibres:
\[
F_1=2G_1,\;\; F_2=2G_2 \; \Longrightarrow \; K_Y=G_1-G_2.
\]
In consequence, any curve on $Y$ meets the multiple fibres with even intersection number and, in particular,
there cannot be a section.


The universal cover of an Enriques surface inherits the elliptic fibration structure.
A general such K3 surface $X$ has $\NS(X)=U(2)+E_8(-2)$;
such a K3 surface does not admit an elliptic fibration with section since
all intersection numbers in $\NS(X)$ are even.
In consequence, elliptic K3 surfaces with section and Enriques involution lie in $9$-dimensional subspaces of the moduli space of K3 surfaces with Enriques involutions.
Nonetheless these surfaces have many interesting applications as we shall see in this paper.

For the sake of convenience, we will allow ourselves some ambiguity of notation.
Since K3 surfaces and Enriques surfaces may admit distinct elliptic fibrations,
it is strictly speaking necessary to specify the fibration 
when we want to discuss its properties such as the Mordell-Weil group.
Whenever the fixed fibration is clear,
we will suppress this subtlety in our notation and simply refer to the elliptic surface $X$,  $\MW(X)$ etc.

\subsection{Mordell-Weil lattices}
\label{ss:MWL}

In \cite{ShMW} Shioda introduced a technique how to 
equip the Mordell-Weil group of an elliptic surface (up to torsion)
with the structure of a lattice (not necessarily integral).
Here only those elliptic surfaces are considered which are not of product type.
For later use, we review the construction in this section.
Details can be found in \cite{ShMW}.

On an elliptic surface $S$ with section algebraic and numerical equivalence coincide.
We fix the zero section $O$ and a general fibre $F$.
These divisor classes span a unimodular lattice. 
Depending on the parity of $O^2=-\chi(\OO_S)<0$,
this lattice is odd or even as follows:
\[ 
\langle O,F\rangle =
\begin{cases}
\langle 1\rangle+\langle-1\rangle
&
\text{ if $\chi(\OO_S)$ is odd},\\ 
U 
&
\text{ if $\chi(\OO_S)$ is even}.\\ 
\end{cases}
\]

Independent of the parity, the above lattice defines an orthogonal projection $\psi$ inside $\NS(S)$.
The image of $\NS(S)$ under $\psi$ is called the essential lattice $\Ess(S)$ of the elliptic surface $S$:
\[
\Ess(S):=\psi(\NS(S)) = \langle O,F\rangle^\bot \subset\NS(S).
\]
The essential lattice is negative definite and even.
It contains all root lattices $R_v$ of fibre components avoiding the zero section as orthogonal summands.
These fibre components are referred to as non-identity components,
as opposed to the distinguished identity component of each fibre that intersects the zero section $O$.
The trivial lattice $\Triv(S)$ is generated by all fibre components and the zero section.
It admits a decomposition
\[
\Triv(S) = \langle O,F\rangle + \sum_v R_v.
\]
A section $P\in\MW(S)$ will be identified with the corresponding divisor class in $\NS(S)$. 
The orthogonal projection $\psi$ acts on the section $P$ as
\begin{eqnarray}
\label{eq:varphi}
\psi(P) = P - O - (\chi(\OO_S)+(P\cdot O)) F.
\end{eqnarray}
Here $\psi(P)^2=-2(\chi(\OO_S)+(P\cdot O))$ is sometimes called the naive height (up to sign).
As soon as $\chi(\OO_S)>1$ (for instance for K3 surfaces),
we have $\psi(P)^2\leq -4$ for any section $P\neq O$.
Hence the root lattices $R_v$ can be recovered from the roots of the essential lattice:
\[
\sum_v R_v = \Ess(S)_\text{root}.
\]
In this way, the essential lattice of an elliptic K3 surface encodes all information about reducible singular fibres.
For the Mordell-Weil group there is an isomorphism
\begin{eqnarray}
\label{eq:MW-NS}
\MW(S) \cong \NS(S)/\Triv(S).
\end{eqnarray}
This can be broken down into two statements. The torsion inside $\MW(S)$ can be computed through the primitive closure $\Triv(S)'$ of the trivial lattice:
\[
\MW(S)_\text{tor} \cong \Triv(S)'/\Triv(S).
\]
For the non-torsion, one considers another orthogonal projection $\varphi$,
this time with respect to $\Triv(S)$.
Since the primitive closure of the trivial lattice need not be unimodular,
we have to tensor with $\Q$ in order to set up $\varphi$ properly:
\[
\varphi: \NS(S)_\Q \to (\Triv(S)_\Q)^\bot\subset\NS(S)_\Q.
\]
Then the isomorphism \eqref{eq:MW-NS} is established by
verifying two facts based on the natural embedding $\MW(S)\hookrightarrow \NS(S)$:
\begin{enumerate}[1.]
\item
The restriction of $\varphi$ to $\MW(S)$ defines a group homomorphism with respect to the group structure of the generic fibre of the elliptic surface $S$; i.e.~
for sections $P,Q\in\MW(S)$ we have $\varphi(P\boxplus Q) = \varphi(P)+\varphi(Q)$ 
where $\boxplus, \boxminus$ denote the group operations in $\MW(S)$.
\item
$\varphi$ induces an isomorphism
$\MW(S)/\MW(S)_\text{tor} \cong \varphi(\NS(S))$.
\end{enumerate}
The quotient 
$\MW(S)/\MW(S)_\text{tor}$
is endowed with the structure of a positive-definite lattice by the height pairing
\[
\langle P,Q\rangle := -\varphi(P)\cdot\varphi(Q), \;\;\; P,Q\in\MW(S).
\]
The resulting lattice is called \emph{Mordell-Weil lattice} and denoted by $\MWL(S)$.
Note that due to tensoring with $\Q$ in the orthogonal projection,
the Mordell-Weil lattice need not be integral.
In fact, the following equality often requires $\MWL(S)$ to have rational (non-integral) discriminant:
\[
|\disc(\NS(S))| = |\disc(\Triv(S))\cdot \disc(\MWL(S))| / (\#\MW(S)_\text{tor})^2.
\]
However, the Mordell-Weil lattice is not very far from being integral since it is known that
\[
\MWL(S) \hookrightarrow (\Triv(S)_{\NS(S)}^\bot)^\vee.
\]
The height pairing can be expressed purely in terms of local contributions.
First the intersection number with the zero section is taken into account (as in the naive height).
Moreover there are correction terms depending on the non-identity fibre components that are met
(cf.~\cite[(8.16)]{ShMW}).
These correction terms account for the possible failure of $\MWL(S)$ to be integral.
As height of a section $P\in\MW(S)$ one defines
\[
h(P) :=\langle P,P\rangle= 2\chi(\OO_S) + 2 (P\cdot O) - \sum_v \mbox{corr}_v(P).
\]
The height of a section is non-negative and zero exactly on torsion sections.
Obviously, if there are no correction terms then the height is an even positive integer.

\section{Elliptic K3 surfaces with Enriques involution}
\label{s:ell}

This section features a new technique to construct K3 surfaces with a specific Enriques involution.
The approach is very geometric and will later be applied to other settings
such as Enriques surfaces with specific automorphisms (Section \ref{s:cti}) and Brauer groups (Section \ref{s:Beau}).
Consequences on the arithmetic side will be investigated in a forthcoming paper \cite{HS}.

The overall idea is to construct Enriques surfaces with an elliptic fibration with a rational bisection 
that splits into two sections on the K3 cover.
As a subcase of this setting, the so-called \emph{special} Enriques surfaces have been studied extensively before, see \ref{ss:special} .
We start by considering the most general setting.

\subsection{Rational elliptic surfaces}

Let $S$ denote a rational elliptic surface with section.
Over algebraically closed fields, it is known that such elliptic surfaces
can be expressed as cubic pencils.
This yields a model as $\PP^2$ blown up in the base points of the pencil.
In general, these base points yield independent sections.
Selecting one of them as the zero section (denoted by $O$),
the remaining eight generate the Mordell-Weil lattice $\MWL(S)$ of rank $8$ up to index three.
By the Shioda-Tate formula, the $\MW$-rank drops only if there are reducible fibres
(which could correspond to infinitely near base points).
In any case, the orthogonal projection $\psi$ with respect to the lattice generated by $O$ and $F$ gives an isometry
\[
\NS(S) = \langle O,F\rangle + E_8(-1) = \langle 1\rangle + \langle-1\rangle + E_8(-1).
\]
By \ref{ss:MWL}, the summand $E_8(-1)$ is identified with the essential lattice $\Ess(S)$.

\subsection{Base change}

We let now $f: \PP^1\to\PP^1$ be a morphism of degree two.
Denote the ramification points by $t_0, t_\infty$.
Then the pull-back $X$ of $S$ via $f$ defines a K3 surface 
under the following general

\textbf{Assumption:}
The fibres of $S$ at the ramification points are reduced.

In other words, we allow smooth fibres and singular fibres of Kodaira types $I_n (n>0), II, III, IV$.
Only Kodaira types $II^*, III^*, IV^*, I_n^* (n\geq 0)$ are disallowed.

In total, we thus obtain a ten-dimensional family of elliptic K3 surfaces with sections (8 dimensions from the rational elliptic surface and two dimensions from the base change).
Pulling back $\NS(S)$ via $f^*$ we see that $\langle 2\rangle + \langle-2\rangle+E_8(-2)$ embeds into $\NS(X)$.
Thanks to the elliptic surface structure on $X$, this can be extended to the primitive embedding
\[
U+E_8(-2) \hookrightarrow \NS(X).
\]
In particular, we see from the moduli dimensions that a general K3 surface in this family has $\NS=U+E_8(-2)$.
In this case, we have an identification of Mordell-Weil lattices
\begin{eqnarray}
\label{eq:MW(2)}
\MWL(X) = \MWL(S)(2).
\end{eqnarray}
Since twice the Enriques lattice does not embed primitively into $U+E_8(-2)$
a general K3 surface from the ten-dimensional family cannot admit an Enriques involution.
We shall now impose a geometric condition that allows us to construct nine-dimensional families of K3 surfaces with Enriques involution.

\subsection{Quadratic twist}
\label{ss:quad}

In order to exhibit K3 surfaces with Enriques involution within our family,
we investigate the quadratic twist of $S$ with respect to $f$.
This is the geometric heart of our construction.
Geometrically the quadratic twist arises as follows.

Let $\imath$ denote the deck transformation for $f$,
i.e.~$\imath\in\Aut(\PP^1)$ such that $f=f\circ \imath$.
Then $\imath$ induces an automorphism of $X$ that we shall also denote by $\imath$.  
The quotient $X/\imath$ returns exactly the rational elliptic surface $S$.
On $X$ we have another natural involution, 
namely the hyperelliptic involution which acts as $(-1)$ on the fibres.
Clearly $\imath$ and $(-1)$ commute,
and their composition gives a further involution
\[
\jj = \imath\circ(-1)\in\Aut(X).
\]
Here $\jj$ has exactly eight fixed points -- 
the two-torsion points in the fixed fibres of $\imath$ at $t_0, t_\infty$
(interpreted in a suitable sense if a fixed fibre is not smooth).
Moreover $\jj$ leaves the $2$-form invariant 
since both $\imath$ and $(-1)$ invert the sign.
Hence $\jj$ is a Nikulin involution,
and the quotient $X/\jj$
has a resolution $X'$ that is K3.
Here $X$ and $X'$  fit into the following commutative diagram:
$$
\begin{array}{ccccc}
&& X &&\\
	&  \stackrel{\scriptstyle f_\imath}\swarrow &  \downarrow & \stackrel{\scriptstyle \;\; f_{\jj}}\searrow &\\
S && \PP^1 && X'\\
 \downarrow & \stackrel{{\scriptstyle f}}\swarrow &&  \stackrel{{\scriptstyle \;\; f}}\searrow &  \downarrow\\
 \PP^1 && = && \PP^1
\end{array}
$$
By construction, $X'$ comes equipped with an elliptic fibration.
In fact, this fibration is almost the same as that of $S$ 
which is why $X'$ is sometimes called the quadratic twist of $S$ with respect to the base change $f$:
only the singular fibres at the fixed points $t_0, t_\infty$ differ.
For instance, if  $S$ has smooth fibres at these points,
then so has $X$;
the quotient $X/\jj$ attains a double line with four isolated rational double points.
In the resolution $X'$, the singularities are replaced by simple rational components.
This results in singular fibres of type $I_0^*$ (and similar types if the fixed fibres fail to be smooth).

\begin{figure}[ht!]
\setlength{\unitlength}{.35in}
\begin{picture}(5,3)(0,0.2)

\thicklines

\put(0,.5){\line(1,0){5}}

\thinlines

\multiput(1,0)(1,0){4}{\line(0,1){3}}

\end{picture}
\caption{Singular fibre of type $I_0^*$}
\label{Fig:0*}
\end{figure}
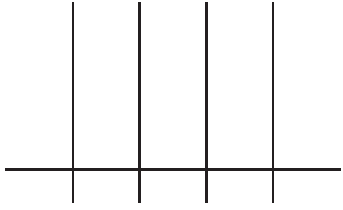

Among the four simple fibre components, the identity component is distinguished 
by intersecting the zero section $O'$ of $X'$.
The non-identity components correspond to the Dynkin diagram $D_4(-1)$.
In general, we thus see that $\NS(X') = U + 2D_4(-1)$.

By definition, $\imath$ and $\jj$ operate exactly oppositely on $H^2(X,\Q)$.
Thus this cohomology group splits into parts coming from $S$ and from $X'$
(in agreement with the fact that $X$ and $X'$ share the same Picard number);
the same applies to the Mordell-Weil group of $X$ up to a $2$-power index, or after tensoring with $\Q$:
\[
\MWL(X)_\Q = f_\imath^*\MWL(S)_\Q + f_{\jj}^*\MWL(X')_\Q.
\]
Henceforth we shall abuse notation and  identify the ramification points $t_0, t_\infty$ and their pre-images among all these elliptic fibrations.
Then we have the following induced action of the above involutions $\imath$ and $\jj$:

\begin{table}[ht!]
\begin{tabular}{ccc}
\hline
sublattice & $\imath^*$ & $\jj^*$\\
\hline
$f_\imath^*\MWL(S)$ & invariant & anti-invariant\\
$f_{\jj}^*\MWL(X')$ & anti-invariant & invariant\\
\hline
\end{tabular}
\end{table}

By translation,
the sections of $S$ and $X'$ give automorphisms of $X$.
Usually they will have infinite order,
but the sections can also be used to define further involutions.
For instance, let $Q\in\MW(S)$ and 
denote the corresponding translation on $X$ by $(\boxplus f_\imath^*Q)\in\Aut(X)$.
Then
\[
\jj\circ(\boxplus f_\imath^*Q)\in\Aut(X)
\]
defines a Nikulin involution.
Two such involutions for $Q, R\in\MW(S)$ are conjugate in $\Aut(X)$
if $Q\boxminus R$ is two-divisible in $\MW(S)$.
In that case, they result in isomorphic quotient surfaces.

\subsection{Enriques involution}
\label{ss:inv}

We now let $P'\in\MW(X')$.
Denote the induced section on $X$ by $P=f_{\jj}^*P'$ and translation by $P$ by $(\boxplus P)$.
Since $P$ is $\imath^*$-anti-invariant,
we derive the following involution on $X$:
\[
\tau = \imath\circ (\boxminus P)\in\Aut(X).
\]
Note that $\tau^*$ acts as $(-1)$ on the $2$-form,
so the quotient $X/\tau$ cannot be K3.
Of course, we would like this quotient to be an Enriques surface.
For this we have to check whether $\tau$ has any fixed points on $X$.
Since $\imath$ exchanges fibres while translation by $P$ fixes them,
we clearly have
\[
\mbox{Fix}(\tau)\subset\mbox{Fix}(\imath) = \{X_{t_0}, X_{t_\infty}\}.
\]
On the fixed fibres, say at $t_0$, the involution $\tau$ acts as
\[
X_{t_0}\ni x \mapsto \tau(x)= x + P_0,\;\;\; P_0 = P(t_0) = P \cap X_{t_0}.
\]
Hence deciding whether $\tau$ acts freely amounts to checking
which points on the fixed fibres $P$ specialises to.
If the fixed fibres are smooth,
there are fixed points (even fixed fibres)
if and only if 
\begin{eqnarray}\label{eq:P-O}
P\cap O\cap  \{X_{t_0}, X_{t_\infty}\} \neq \emptyset.
\end{eqnarray}
The case of non-smooth fibres requires some additional attention,
but the involution can  only  be fixed point free 
if the fixed fibres are semi-stable  (cf.~also \ref{ss:2-t}).
If $\tau$ has fixed points,
then the quotient $X/\tau$ is a rational elliptic surface with one or two multiple fibres (without section).
The condition \eqref{eq:P-O} can be checked on the Nikulin quotient $X'$.
Namely $P'$ intersects one of the four simple components of each ramified fibre on $X'$ (type $I_0^*$),
and $P$ specialises to the two-torsion point on the respective fixed fibre of $X$ that corresponds to this component.
In consequence, $\tau$ has no fixed points 
if and only if $P'$ intersects both fibres of $X'$ at $t_0$ and $t_\infty$
at non-identity components.
An example of this intersection behaviour is sketched in the following figure:

\begin{figure}[ht!]
\setlength{\unitlength}{.45in}
\begin{picture}(5.6,2.8)(0,-0.3)

\thinlines

\qbezier(.9,1.7)(2.4,3)(3.9,1.7)
\put(2.3,1.95){$O'$}

\put(2.3,-.2){$P'$}

\qbezier(.9,.3)(1.3,-.5)(2.4,.2)
\qbezier(2.4,.2)(3.4,1)(3.8,.3)


\thinlines

\put(0.4,1.7){\line(4,1){1}}
\put(0.4,1.4){\line(4,1){1}}
\put(0.4,0.3){\line(4,-1){1}}
\put(0.4,0.6){\line(4,-1){1}}

\put(0,-.2){$X_{t_0}'$}

\thicklines
\put(0.6,2){\line(0,-1){2}}

\thinlines

\put(4.4,1.7){\line(-4,1){1}}
\put(4.4,1.4){\line(-4,1){1}}
\put(4.4,0.3){\line(-4,-1){1}}
\put(4.4,0.6){\line(-4,-1){1}}

\put(4.3,-.2){$X_{t_\infty}'$}

\thicklines
\put(4.2,2){\line(0,-1){2}}

\thinlines

\end{picture}
\caption{Set-up for an Enriques involution $\tau$}
\label{Fig:P-O}
\end{figure}
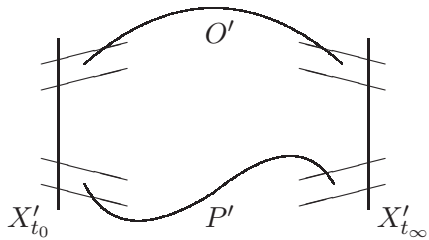

In the sequel, we will refer to $\tau$ as Enriques involution of base change type.

\subsection{Enriques quotient}
\label{ss:special}

In the above set-up, let $Y=X/\tau$ denote the associated Enriques surface.
The given elliptic fibration on $X$ induces an elliptic fibration on $Y$.
Here the smooth fibre of $Y$ at $t$ is isomorphic to the fibres of $X$ at $f^{-1}(t)$ 
-- or to the fibre of the rational elliptic surface $S$ at $t$.
We have seen in \ref{ss:ell}
that no elliptic fibration on $Y$ can have a section.
However, the sections $O$ and $-P$ of the specified elliptic fibration on $X$ are identified under $\tau$ and thus give a bisection for the induced fibration on $Y$.
Our construction is universal in the following sense:

\begin{Proposition}
\label{Prop:split}
Let $Y$ be an Enriques surface with an elliptic fibration with a rational bisection
which splits into sections for the induced fibration on the universal K3 cover $X$.
Then $X$ and $Y$ arise through the base change construction from \ref{ss:inv}.
\end{Proposition}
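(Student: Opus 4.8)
The plan is to reverse-engineer the base change construction from the geometric data on $Y$. Let $\pi\colon X\to Y$ be the K3 cover with Enriques involution $\tau$, let $g\colon Y\to\PP^1$ be the given elliptic fibration, and let $B\subset Y$ be the rational bisection. By hypothesis $\pi^{-1}(B)=O\cup P$ splits into two sections of the induced jacobian fibration $h\colon X\to\PP^1$; choose $O$ as the zero section. The first step is to see that the fibration $h$ on $X$ factors through the base change. Since $g$ has exactly two multiple fibres, say over $t_0,t_\infty\in\PP^1$, and $\tau$ interchanges the two halves of $\pi^{-1}$ of a general fibre, the composite $X\to Y\to\PP^1$ is generically $2$-to-$1$; write it as $X\to\PP^1\xrightarrow{f}\PP^1$ where $f$ is the degree-two map. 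I must check $f$ is ramified exactly over $t_0,t_\infty$: away from the multiple fibres $\tau$ acts freely on fibres of $h$ permuting them in pairs, so $f$ is unramified there, while over $t_0,t_\infty$ the two multiple fibres of $g$ force the single fibre of $h$ (which maps $2:1$ to its image), giving ramification. Thus $f$ is the quadratic base change of the setup, and $\imath\in\Aut(X)$, the deck transformation of $X\to S$, is realised as a covering involution over $\PP^1$.

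The second step is to recover the rational elliptic surface $S$ and the K3 surface $X'$. Set $\imath$ to be the involution of $X$ over $f$ whose quotient is $X/\imath=:S$; I must argue $S$ is a \emph{rational} elliptic surface, which follows from $e(X)=24$, the Hurwitz/Euler relation $e(X)=2e(S)-\sum(\text{contributions over }t_0,t_\infty)$ together with the constraint that the fibres over the ramification points are reduced — and the latter reducedness is exactly what the fixed-point-freeness of $\tau$ buys us, as explained in \ref{ss:inv} (the involution can only be free if the fixed fibres are semi-stable, hence reduced). Once $S$ is known to be rational elliptic, define $\jj=\imath\circ(-1)$; by the discussion in \ref{ss:quad} this is a Nikulin involution and $X/\jj$ resolves to a K3 surface $X'$, the quadratic twist of $S$. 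This places $X$ inside the ten-dimensional family; what remains is to identify the Enriques involution $\tau$ with one of the form $\imath\circ(\boxminus P)$ for a section $P$ pulled back from $X'$.

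The third and central step is to show that the section $P$ (the second component of $\pi^{-1}(B)$) is $\imath^*$-anti-invariant and descends to a section $P'\in\MW(X')$. Here I would use that $\tau$ and $\imath$ induce the same map on the base $\PP^1$, so $\sigma:=\imath^{-1}\circ\tau=\imath\circ\tau\in\Aut(X)$ acts fibrewise; since $\tau^*=-\imath^*$ on the $2$-form (both $\tau$ and $\imath$ act by $-1$, so actually $\tau\imath$ fixes the $2$-form), $\sigma$ preserves the fibration and the $2$-form, hence acts on each fibre as a translation, i.e. $\sigma=\boxplus R$ for a section $R\in\MW(X)$. Then $\tau=\imath\circ(\boxplus R)$; comparing with the involution condition $\tau^2=\mathrm{id}$ forces $R\boxplus\imath^*R=O$, i.e. $R$ is $\imath^*$-anti-invariant. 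By the decomposition $\MWL(X)_\Q=f_\imath^*\MWL(S)_\Q\oplus f_{\jj}^*\MWL(X')_\Q$ with the $\imath^*$-action recorded in the table of \ref{ss:quad}, the anti-invariant part is exactly $f_{\jj}^*\MWL(X')_\Q$, so up to a $2$-power $R$ comes from a section $P'$ of $X'$; replacing $R$ by this (which does not change the conjugacy class of the involution by the remark at the end of \ref{ss:quad}) we get $\tau=\imath\circ(\boxminus P)$ with $P=f_{\jj}^*P'$, precisely the Enriques involution of base change type. Finally, the fixed-point-free condition on $\tau$ translates, via \ref{ss:inv}, into $P'$ meeting both $I_0^*$-fibres of $X'$ at non-identity components; since $\tau$ is by assumption an Enriques involution this holds automatically, and the bisection $B$ on $Y$ is the image of $O\cup P$ as required.

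The main obstacle I anticipate is the bookkeeping over the non-smooth fibres over $t_0,t_\infty$: the clean fibrewise-translation argument is transparent when the fixed fibres are smooth, but when they are of type $I_n$, $II$, $III$ or $IV$ one must interpret "translation by a two-torsion point" carefully (as flagged in \ref{ss:inv} and \ref{ss:2-t}), and one must verify that the passage $X\rightsquigarrow X/\jj\rightsquigarrow X'$ genuinely produces the right fibre type so that $P'$ is well defined and the identification $\MWL(X)_\Q=f_\imath^*\MWL(S)_\Q\oplus f_{\jj}^*\MWL(X')_\Q$ still holds. I would handle this by first treating the generic (smooth fixed-fibre) case in full and then noting that the degeneration is controlled entirely by the reducedness assumption, which the hypothesis that $\tau$ is an Enriques involution guarantees.
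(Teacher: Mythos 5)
Your overall strategy is the right one, but it contains a genuine gap at the crucial point: the existence of the involution $\imath$. In your second step you \emph{set} $\imath$ to be ``the involution of $X$ over $f$ whose quotient is $S$'', and everything downstream (the fibrewise automorphism $\sigma=\imath\circ\tau$, the translation section $R$, the anti-invariance from $\tau^2=\mathrm{id}$) presupposes that such an involution exists. But this is exactly what has to be proved. The deck transformation of $f$ on the base certainly lifts to $X$ --- $\tau$ itself is a lift --- but an arbitrary lift differs from $\tau$ by a fibre-preserving automorphism, and neither the existence of a lift of order two fixing a section nor the rationality of its quotient is automatic. Your Euler-characteristic argument computes $e(S)$ \emph{given} such an $\imath$; it does not produce one. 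Note also that if you try to repair this by defining $\imath:=(\boxminus P)\circ\tau$ (normalised to fix the chosen zero section), then $\imath^2$ is translation by $P\boxplus\tau^*P$, so $\imath^2=\mathrm{id}$ is \emph{equivalent} to the anti-invariance of $P$ --- you cannot then derive anti-invariance from $\tau^2=\mathrm{id}$ without circularity.

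The paper closes precisely this gap. It defines $\iota=(\boxminus P)\circ\tau$ directly and proves $\iota^2=\mathrm{id}$ in two steps: first, $\iota$ fixes the classes of a general fibre $F$ and of the zero section, which span a hyperbolic plane in $\NS(X)$, so $\iota$ has finite order; second, $\iota^2$ acts trivially on the holomorphic $2$-form and fixes the zero section as a curve (pointwise), so by Nikulin's classification of finite symplectic automorphisms of K3 surfaces \cite{N0} (which have only isolated fixed points) one gets $\iota^2=\mathrm{id}$. Only then does one know that $X/\iota$ is a rational elliptic surface with section and that one is in the situation of \ref{ss:inv}. If you insert this argument in place of your postulated $\imath$, the rest of your outline (identifying $\imath\circ\tau$ as translation by an anti-invariant section, relating it to $\MWL(X')$, and checking the fixed-point-free condition on the ramified fibres) goes through. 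One smaller caveat: an $\imath^*$-anti-invariant section need only lie in $f_{\jj}^*\MWL(X')_\Q$ up to a $2$-power index, and the remark you invoke from \ref{ss:quad} concerns conjugacy of Nikulin involutions $\jj\circ(\boxplus Q)$ for $Q\boxminus R$ two-divisible in $\MW(S)$, not the replacement you propose; this point needs a separate justification.
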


\begin{proof}
The proof is essentially the same as \cite[Lemma (2.6)(i)]{K-E}.
In detail, let $Y=X/\tau$ and assume that the bisection splits into sections $P,Q$.
Choosing $Q$ as the zero section on $X$, 
we define a new involution $\iota=(\boxminus P)\circ\tau$ on $X$.
By definition, $\iota$ fixes the general fibre $F$ and the zero section $Q$ of the induced elliptic fibration.
Since these fixed divisors generate a hyperbolic plane inside $\NS(X)$, $\iota$ has finite order.
Moreover $\iota^2$ acts trivially on the holomorphic $2$-form.
By the classification of \cite{N0},
the fixed curve $Q$ then implies that $\iota^2=1$.
By construction, the quotient $X/\iota$ is a rational elliptic surface with section.
\end{proof}

There is a subcase of this setting that has been investigated extensively before,
namely \emph{nodal} or \emph{special} Enriques surfaces.
In general, an Enriques surface is called nodal if it contains a nodal curve,
i.e.~a curve of self-intersection $-2$, thus necessarily rational and smooth.
On the K3 cover, such a curve splits into two disjoint smooth rational curves, again with self-intersection $-2$.

It was proved by Cossec in \cite{Cossec}
that the property of being nodal always translates to elliptic fibrations.
Namely for an Enriques surface $Y$, it is equivalent to contain a nodal curve
and to admit a special elliptic fibration, i.e.~an elliptic fibration with a nodal curve as bisection.
For the special case $P\cap O=\emptyset$,
the construction from \ref{ss:inv} thus leads exactly to special Enriques surfaces.
We will investigate two instances of this special case in more detail in \ref{ss:2-t} and \ref{ss:Ex-M=1}.

\subsection{Example: Two-torsion section}
\label{ss:2-t}

The simplest example for an involution of the above kind comes from a two-torsion section.
Namely, if a two-torsion point $P$ is induced from $S$ or equivalently from $X'$, 
then it indeed comes from both surfaces, 
since quadratic twisting preserves exactly the two-torsion in $\MW$.
Note that torsion sections of order relatively prime to the characteristic do not intersect the zero section.
Under our assumptions (characteristic zero or odd), it is thus impossible for $P$ to specialise to $O$ on the fixed fibres.
If both fixed fibres are smooth, we thereby obtain an Enriques involution $\tau$.

If a fixed fibre is not smooth,
$\tau$ can only be fixed point free if the fibre has type $I_n$ for some even $n$ and 
if $P$ meets the opposite component (of the identity component).
Otherwise $\tau$ would have a fixed component because either $P$ would meet the fibre at the identity component, or the fibre type would differ from $I_n, III$ and thus not admit free actions of order two.
As any component of a singular fibre is isomorphic to $\PP^1$, 
there would be at least two fixed points.
(The only remaining fibre type $III$ cannot occur at ramification points unless we are in characteristic two where there is wild ramification.)

The assumption of a two-torsion section cuts the number of moduli to four:
the general such elliptic K3 surface $X$ has eight fibres of type $I_2$ and $I_1$ each;
it arises from a rational elliptic surface $S$ with four singular fibres of each type by a quadratic base change.
Such rational elliptic surfaces have Mordell-Weil rank 4 and four moduli.
Together with the two moduli of the base change, we obtain six moduli for the K3 surface $X$.
In general
$\NS(X)$ comprises a hyperbolic plane, singular fibres and the Mordell-Weil group induced from $S$:
\[
\rho(X) \geq 2 + 8 + 4=14.
\]

\subsection{Lattice theoretic description}
\label{ss:latt}

Consider the sublattice $U\subset \NS(X')$ specified by the zero section $O'$ and a general fibre $F'$ of the elliptic K3 surface $X'$.
Recall the orthogonal projection $\psi$ in $\NS(X')$ with respect to $U$ from \ref{ss:MWL}.
Here 
\[
\psi(P') = P' - O' - (P'\cdot O' + 2) F'
\]
gives the naive height through $\psi(P')^2 = - (4 + 2(P'\cdot O'))$.
The height of $P'$ is obtained 
by subtracting correction terms corresponding to
the reducible singular fibres that $P'$ intersects at non-identity components:
\[
h(P') = -\psi(P')^2 - \sum_v \mbox{contr}_v(P').
\]
As explained, the correction terms depend on the precise fibre component met,
but for fibres of type $I_0^*$, the correction term is always $1$ (or $0$ if the section meets the identity component).

Recall that for $P'$ to induce a fixed point free involution $\tau$ on $X$,
we need that $P'$ intersects the non-identity components of the ramified fibres on $X'$.
This can be expressed lattice theoretically by requiring 
that a certain sublattice $U+L$ embeds primitively into $\NS(X')$.
Here $U+L$ is the trivial lattice $\Triv(X')$ enriched by the section $P'$ meeting the ramified fibres appropriately.
In general, $\Triv(X')=U+2D_4(-1)$,
and the lattice $L$ results from the orthogonal projection $\psi$ with respect to $U$.
Thus 
\[
L=\langle 2D_4(-1), \psi(P')\rangle.
\] 
The condition that $P'$ meets both ramified fibres at non-identity components is encoded in the following diagram.
The diagram incorporates the non-identity components of the two fixed fibres (represented by dots forming two disjoint root lattices of type $D_4$) 
and the divisor $\psi(P')$ (represented by a circle);
as usual, dots or circles are connected if and only if they intersect.

\begin{figure}[ht!]
\setlength{\unitlength}{.45in}
\begin{picture}(4,2.6)(0,-0.2)

\multiput(0,2)(0,-1){3}{\circle*{.1}}
\multiput(4,2)(0,-1){3}{\circle*{.1}}
\multiput(1,1)(2,0){2}{\circle*{.1}}
\put(2,1){\circle{.15}}
\multiput(0,0)(4,0){2}{\line(0,1){2}}
\put(0,1){\line(1,0){4}}

\end{picture}
\caption{Lattice $L$ embedding primitively into $\NS(X')$}
\label{Fig:P'}
\end{figure}
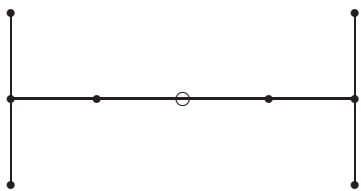

For simplicity, let us write $M=(P'\cdot O')+1\geq 1$.
Due to the correction terms for both fixed fibres in the height of $P'$,
we obtain 
$h(P')=2M$ (unless there are other reducible singular fibres and $P$ meets them at non-identity components).
In any case, the lattice $L$ has discriminant
\[
\disc(L) = - 32 M.
\]
For each $M\geq 1$,
the lattice $U+L$ admits a unique primitive embedding into the K3 lattice by \cite[Thm.~1.14.4]{N}.
The $(U+L)$-polarised K3 surfaces $X'$ thus form an irreducible moduli space of dimension nine.
Quadratic base change ramified at the fixed fibres yields nine-dimensional families
of elliptic K3 surfaces $X$ with an Enriques involution $\tau$ induced by the section $P=f_{\jj}^*P'$.
By \cite{ShMW}, the heights are related by 
\[
h(P)=2h(P').
\]
In general, there are no other reducible fibres, so 
\[
h(P')=2M, \;\;\psi(P')^2=-2M-2.
\]
Hence the general surface $X$ has
\begin{eqnarray}
\label{eq:Ohashi}
\NS(X) = U + E_8(-2) + \langle-4M\rangle.
\end{eqnarray}
Recently Ohashi has classified  K3 surfaces with Picard number $\rho= 11$ and an Enriques involution
\cite{Ohashi}.
The above families form one out of two possible series of K3 surfaces.
Ohashi used these series to prove the following fact: 
though the number of distinct Enriques involution on a K3 surface is always finite,
there can be arbitrarily many. (Here the number of distinct Enriques involutions increases with the number of prime divisors of $M$).
Ohashi's argument is purely lattice-theoretic,
so our geometric construction may be a useful supplement.

\subsection{Example: Special Enriques surfaces} 
\label{ss:Ex-M=1}

We conclude this section by working out explicitly the family for $M=1$, i.e.~special Enriques surfaces.
Rational elliptic surfaces with section have $8$ moduli.
Over fields of characteristic $\neq 2,3$,
a general rational elliptic surface $S$ with section is given by a Weierstrass form
\[
S:\;\; y^2 = x^3 + A(t) x + B(t)
\]
where $A$ and $B$ are polynomials of degree at most $4$ resp.~$6$.
To prevent that the Weierstrass form actually describes a product $E\times\PP^1$, 
we require that $AB$ has at least two different zeroes (possibly including $\infty$);
in particular $A, B$ are not both constant.
The singular fibres of the elliptic surface $S$ are located at the zeroes of the discriminant polynomial
\[
\Delta(S) = 16 (4A^3+27B^2).
\]
By \cite{Tate}, the fibre type can be read off from $\Delta(S)$ and the j-invariant $j(S)=64^3 A^3/\Delta(S)$.
By assumption, there are at least two singular fibres,
and in general there are twelve of them, each irreducible nodal (Kodaira type $I_1$).

Here we can still apply M\"obius transformations to the base curve $\PP^1$ and rescale $x,y$ by squares resp.~cubes to cut down the degrees of freedom from the twelve coefficients of $A$ and $B$ to eight.
In the present situation
it will be convenient to use the M\"obius transformations to normalise the base change as  $f(t)=t^2$
such that it ramifies at $0$ and $\infty$.
Then the quadratic twist $X'$ of $S$ with respect to the base change $f$ is given by
\begin{eqnarray}\label{eq:X'}
X':\;\; y^2 = x^3 + t^2 A(t) x + t^3 B(t).
\end{eqnarray}
This defines a K3 surface if and only if both fibres of $S$ at $0$ and $\infty$ are reduced.
In terms of the polynomials $A, B$, this holds true unless $t^2|A, t^3|B$ or $\deg(A)\leq 2, \deg(B)\leq 3$
(i.e.~outside codimension four-subfamilies).
One has $j(X')=j(S)$, but $\Delta(X')=t^6\Delta(S)$, indicating that only the (singular) fibres at $0$ and $\infty$ differ.

The common quadratic base change of $S$ and $X'$ is the elliptic K3 surface
\[
X:\;\; y^2 = x^3 + A(t^2) x + B(t^2).
\]
Here the deck transformation $\imath$ is given by $t\mapsto -t$, and the hyperelliptic involution $(-1)$ sends $y\mapsto -y$.
Thus the Nikulin involution $\jj$ is defined as
\[
\jj: (x,y,t) \mapsto (x,-y,-t).
\]
In order to induce an Enriques involution on $X$,
we ask for the existence of a section $P'$ on $X'$ as in \ref{ss:inv}.
Here we study the case $M=1$ in the notation of \ref{ss:latt}.
Hence $P'\cdot O'=0$.
This means that $P'$ is given by polynomials
\[
P' = (U'(t), V'(t))\;\;\; \deg(U')\leq 4,\; \deg(V')\leq 6.
\]
We ask that $P'$ meets the non-identity components of the $I_0^*$ fibres at $t=0$ and $\infty$.
By inspection of the Weierstrass form \eqref{eq:X'} this translates into vanishing orders of $U'$ and $V'$ as follows:
\[
t|U', t^2|V' \;\; (\text{at } t=0),\;\;\; \deg(U')\leq 3, \deg(V')\leq 4 \;\; (\text{at } t=\infty).
\]
Thus we can write $P'=(tU, t^2V)$ with polynomials $U,V$ of degree at most two. 
Inserting in \eqref{eq:X'} gives the equation
\begin{eqnarray}
\label{eq:U,V}
tV^2 = U^3 + A U + B.
\end{eqnarray}
This is a polynomial equation of degree six in $t$.
In can be solved generally by choosing $A, U, V$ freely and selecting the resulting $B=tV^2 - (U^3 + A U)$.
One easily checks that this gives a nine-dimensional family of K3 surfaces 
since the polynomials $A, U, V$ have $5+3+3=11$ coefficients in total
compared against the two normalisations left (scale $t$ and independently $x,y$).

Explicitly, let us write $A=a_4t^4+\hdots+a_0, U=u_2t^2+u_1t+u_0$.

\begin{Lemma}
Let $B=tV^2 - (U^3 + A U)$.
Then $X$ admits the section $P=(U(t^2), tV(t^2))$
which induces the Enriques involution $\tau$
unless we are in one of the following two cases:
\begin{enumerate}[(i)]
\item
$a_0=-3u_0^2$ or $a_0=-3u_0^2/4$,
\item 
$a_4=-3u_2^2$ or $a_4=-3u_2^2/4$.
\end{enumerate}
\end{Lemma}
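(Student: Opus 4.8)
The plan is to reduce everything to the behaviour of the two fibres $X_0$, $X_\infty$ over the ramification points $t=0,\infty$ and then to a short discriminant computation. Recall from \ref{ss:inv} that $\tau=\imath\circ(\boxminus P)$ is an involution, that $\mathrm{Fix}(\tau)\subseteq X_0\cup X_\infty$, and that on each of these fibres $\tau$ acts — up to the action of $\imath$ on the exceptional components of the minimal resolution — as translation by $P_0$, where $P_0$ denotes the point in which $P$ specialises; indeed $\imath$ is $(x,y,t)\mapsto(x,y,-t)$, so it acts trivially on the Weierstrass model of $X_0$ and of $X_\infty$. Hence $Y=X/\tau$ is an Enriques surface precisely when $\tau$ has no fixed point on $X_0$ and none on $X_\infty$. (That $P=(U(t^2),tV(t^2))$ lies on $X$ is immediate from \eqref{eq:U,V} with $t$ replaced by $t^2$.)

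The first point is that a \emph{smooth} fixed fibre causes no trouble. Evaluating $P$ at $t=0$ gives $P_0=(u_0,0)$, a nontrivial $2$-torsion point of $X_0\colon y^2=x^3+a_0x+B(0)$ with $B(0)=-u_0(u_0^2+a_0)$; in particular $P_0\neq O$. So if $X_0$ is smooth, then translation by $P_0$ is fixed point free and $\tau$ has no fixed point there. The fibre $X_\infty$ is handled by the same argument: under $t\mapsto 1/t$ the whole construction is symmetric, with $a_i\leftrightarrow a_{4-i}$, $u_i\leftrightarrow u_{2-i}$ and $v_i\leftrightarrow v_{2-i}$, so $X_\infty$ plays the role of $X_0$ for the leading coefficients, the relevant $2$-torsion point being $(u_2,0)$.

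Next I would determine when a fixed fibre is singular. The fibre $X_0$ is singular exactly when the cubic $x^3+a_0x+B(0)$ has a multiple root, i.e.\ when $4a_0^3+27B(0)^2=0$; inserting $B(0)=-u_0(u_0^2+a_0)$ and carrying out the (routine) factorisation
\[
4a_0^3+27u_0^2(u_0^2+a_0)^2=4(a_0+3u_0^2)^2(a_0+3u_0^2/4)
\]
shows that this occurs precisely in case (i), and the $t\mapsto 1/t$ symmetry gives case (ii) for $X_\infty$. Together with the previous paragraph this settles the ``if'' direction: outside (i) and (ii) both fixed fibres are smooth, $\tau$ is fixed point free, and $Y=X/\tau$ is the Enriques surface induced by $P$.

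What remains — and where I expect most of the work to lie — is the converse: in case (i), and symmetrically (ii), $\tau$ must acquire a fixed point. Here one has to control the Kodaira type of the degenerate fibre and which of its components $P$ meets. For a generic member of case (i) one checks that $\Delta$ vanishes to order exactly $2$ at $t=0$, so $X_0$ is semistable of type $I_2$, the $A_1$-point of the Weierstrass model lying over the double root of $x^3+a_0x+B(0)$. If $a_0=-3u_0^2/4$ this double root is $x=-u_0/2$, whereas $P_0$ lies over the simple root $x=u_0$; thus $P_0$ is a smooth point on the identity component, translation by the $2$-torsion point $P_0$ fixes the two nodes of $X_0$, and since $\imath$ fixes these nodes as well, $\tau$ fixes them — so $\tau$ is not fixed point free. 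If $a_0=-3u_0^2$ the double root equals $x=u_0$, i.e.\ $P$ passes through the $A_1$-point, and here I would resolve $X_0$ explicitly and follow both the strict transform of $P$ and the action of $\imath$ on the exceptional curve in order to locate the fixed point. Finally the additive sub-loci inside (i) and (ii) — where, for instance, $u_0=0$ forces a cuspidal fibre or $v_0=0$ a worse singularity — are subsumed under the general principle recorded in \ref{ss:inv} that $\tau$ can only be fixed point free over semistable fixed fibres. The careful bookkeeping in this last step, in particular in the node sub-case, is the main obstacle.
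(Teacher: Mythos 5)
For the direction that carries the real content of the lemma --- that outside cases \emph{(i)} and \emph{(ii)} the involution $\tau$ is fixed point free --- your argument is correct and is essentially the paper's own proof: restrict attention to the two fixed fibres, observe that $P$ specialises there to the non-trivial two-torsion point $(u_0,0)$ resp.\ $(u_2,0)$ while $\imath$ acts trivially on the Weierstrass model of these fibres, so on a \emph{smooth} fixed fibre $\tau$ is a fixed point free translation, and then determine when $X_0$, $X_\infty$ degenerate. You do this last step by factoring the discriminant $4a_0^3+27B(0)^2=(a_0+3u_0^2)^2(4a_0+3u_0^2)$ (which is a correct identity), the paper by factoring the cubic as $(x-u_0)(x^2+u_0x+a_0+u_0^2)$ and asking for a multiple root; these are equivalent, and your $t\mapsto 1/t$ symmetry for case \emph{(ii)} matches the paper's ``analogous argument at $t=\infty$''. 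Where you diverge is the converse. The paper dispatches it in a single sentence, asserting that since $P'$ was arranged to meet non-identity components, fixed point freeness is \emph{equivalent} to smoothness of the fixed fibres; it never examines the degenerate fibres. You instead start a case analysis: your treatment of $a_0=-3u_0^2/4$ (where $P_0$ lies on the identity component of an $I_2$ fibre, so translation by $P_0$ and $\imath$ both fix the two nodes) is correct, but you leave open the sub-case $a_0=-3u_0^2$, where $P$ passes through the $A_1$-point of the Weierstrass model. Your instinct that this is the delicate spot is well founded: there the strict transform of $P$ meets the \emph{non-identity} component of the $I_2$ fibre, translation by $P_0$ therefore interchanges the two components and the two nodes, while $\imath$ fixes both nodes (they lie on the pointwise fixed strict transform of the Weierstrass fibre), so for the generic member of this sub-locus $\tau$ acts freely on the fibre after all. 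In other words, the ``only if'' half of the lemma is genuinely questionable precisely in the sub-case you could not close, and the paper's one-line assertion does not settle it either. Since the lemma is only used through its ``if'' direction (to produce the nine-dimensional family of special Enriques surfaces), this does not affect anything downstream, and your proof of that direction is complete and follows the paper's route.
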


\begin{proof}
It remains to check that $\tau$ has no fixed points on the fixed fibres.
Since we set up $P'$ in such a way that it meets non-identity fibre components,
this is equivalent to the property that the fixed fibres of $X$ at $t=0, \infty$ are smooth.
At $t=0$, the equation for $X$ specialises as
\[
X_0:\;\;\; y^2 = (x-u_0)(x^2+u_0x+a_0+u_0^2).
\]
The fibre is singular if and only if the polynomial on the right hand side has a multiple root.
This happens exactly in the two cases of {\it (i)}.
The analogous argument at $t=\infty$ gives the two cases of {\it (ii)}.
\end{proof}

In the above construction, we have $P\cdot O=P'\cdot O'=0$.
Thus the quotient $Y=X/\tau$ is a special Enriques surface with bisection corresponding to the invariant divisor $O+P$.
With Proposition \ref{Prop:split} it follows that the above construction gives all special Enriques surfaces.
The moduli space involves a subtlety: by  \cite[Prop.~3.6]{Ohashi}, a K3 surface $X$ as above with
$\NS(X) = U + E_8(-2) + \langle -4\rangle$ 
admits a unique Enriques quotient $Y$.
However, the Enriques surface $Y$ generically admits 136 inequivalent special elliptic fibrations by \cite[Thm.~2]{CD-2}.
Hence the moduli space of the above elliptic K3 surfaces gives a 136-fold cover of the moduli space of special Enriques surfaces.
In particular we obtain 

\begin{Corollary}
\label{Cor:M=1}
The moduli space of special Enriques surfaces is unirational.
\end{Corollary}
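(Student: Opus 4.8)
The plan is to read off unirationality directly from the explicit parametrisation built up in Section \ref{ss:Ex-M=1}. Recall that the family of elliptic K3 surfaces $X$ with the section $P=(U(t^2),tV(t^2))$ inducing the Enriques involution $\tau$ was produced by choosing the polynomials $A$ (degree $\le 4$), $U$ (degree $\le 2$), $V$ (degree $\le 2$) freely and then \emph{solving} for $B$ via $B=tV^2-(U^3+AU)$. Thus the total space of the family is rational: it is an open subset of the affine space $\mathbb{A}^{11}$ with coordinates the $5+3+3=11$ coefficients of $A,U,V$ (after the harmless deletions coming from the Lemma and from the reducedness conditions on the fibres at $0,\infty$, all of which are Zariski-open). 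So the first step is simply to record that this $11$-parameter space, modulo the two-dimensional group of normalisations (rescaling $t$, and independently rescaling $x,y$ by a square/cube), is a rational variety of dimension nine, and that it maps dominantly to the moduli space of the relevant $(U+E_8(-2)+\langle-4\rangle)$-polarised K3 surfaces.

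The second step is to pass from K3 surfaces to Enriques surfaces. By the discussion preceding the Corollary: such a K3 surface $X$ admits a \emph{unique} Enriques quotient $Y$ by \cite[Prop.~3.6]{Ohashi}, so the assignment $X\mapsto Y$ is well-defined on the moduli level; and by Proposition \ref{Prop:split} every special Enriques surface arises this way, so the induced map on moduli spaces is dominant. The only wrinkle — and the one genuinely substantive point — is that this map is generically $136$-to-$1$ rather than birational, because a generic special Enriques surface $Y$ carries $136$ inequivalent special elliptic fibrations by \cite[Thm.~2]{CD-2}, each of which is realised by one of our elliptic K3 surfaces. But finite (even just dominant, generically finite) maps suffice for transporting unirationality: if a rational variety dominates $\mathcal{M}$, then $\mathcal{M}$ is unirational. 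Hence the $136$-to-$1$ phenomenon is no obstruction at all, merely a remark on the degree of the cover.

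Concretely I would argue: let $\mathcal{N}$ denote the moduli space of the elliptic K3 surfaces $X$ constructed in \ref{ss:Ex-M=1}, and $\mathcal{M}_{\mathrm{sp}}$ the moduli space of special Enriques surfaces. The explicit construction exhibits a dominant rational map $\mathbb{A}^{11}\dashrightarrow\mathcal{N}$ (coefficients of $A,U,V$), so $\mathcal{N}$ is unirational. The Enriques-quotient construction together with Proposition \ref{Prop:split} gives a dominant rational map $\mathcal{N}\dashrightarrow\mathcal{M}_{\mathrm{sp}}$ (dominant because every special Enriques surface is a quotient of some such $X$). Composing, $\mathbb{A}^{11}\dashrightarrow\mathcal{M}_{\mathrm{sp}}$ is dominant, and since $\dim\mathcal{M}_{\mathrm{sp}}=10$, this exhibits $\mathcal{M}_{\mathrm{sp}}$ as unirational. (If one wants a genuinely rational total space of the correct dimension mapping onto $\mathcal{M}_{\mathrm{sp}}$, note $\dim\mathcal{N}=9$ but the fibration data is extra; in any case a dominant map from \emph{any} rational variety is enough.)

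The main obstacle — really the only point requiring care — is verifying that the map $\mathcal{N}\dashrightarrow\mathcal{M}_{\mathrm{sp}}$ is \emph{dominant}, i.e. that our parametrised family is not contained in a proper subvariety of special-Enriques moduli. This is handled by the combination of Proposition \ref{Prop:split} (every special Enriques surface, together with a choice of special elliptic fibration, comes from the base-change construction in \ref{ss:inv} with $P\cap O=\emptyset$, hence lies in the $M=1$ family) and the dimension count already carried out in \ref{ss:Ex-M=1} (the family has the expected number of moduli, namely nine for the K3, covering the ten-dimensional Enriques moduli $136$-to-$1$ once the fibration is forgotten). Everything else — rationality of the $(A,U,V)$-parameter space, openness of the non-degeneracy conditions of the Lemma — is immediate and needs no computation.
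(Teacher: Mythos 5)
Your argument is essentially the paper's own: the explicit $(A,U,V)$-parametrisation exhibits the nine-dimensional family of elliptic K3 surfaces as rational, Proposition \ref{Prop:split} together with \cite[Prop.~3.6]{Ohashi} gives dominance onto the moduli space of special Enriques surfaces, and a dominant, generically finite ($136$-to-$1$ by \cite[Thm.~2]{CD-2}) map from a rational variety suffices for unirationality. One small correction: the moduli space of special Enriques surfaces is nine-dimensional (it is a divisor in the ten-dimensional moduli space of all Enriques surfaces, finitely covered by the nine-dimensional K3 family), not ten-dimensional as you assert -- but this is immaterial, since dominance alone is what your argument uses.
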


\begin{Remark}
There are other methods to prove unirationality of the moduli space of special Enriques surfaces. As Igor Dolgachev pointed out
this can also be deduced from the fact that any special Enriques surface is isomorphic to a Reye congruence (see \cite{DR}).
\end{Remark}

\begin{Remark}
If $M>1$, then $P'$ and $O'$ intersect (and likewise $P$ and $O$).
This means that $U', V'$ are no longer polynomials of the given degrees, but involve denominators.
These denominators complicate \eqref{eq:U,V}, so that there is no obvious solution to \eqref{eq:U,V} with polynomial $A, B$ anymore.
\end{Remark}

\section{Geometric relations to other specific Enriques surfaces}
\label{s:cti}

In this section, we shall relate the base change construction from Section \ref{s:ell}
to specific Enriques surfaces.
We will concentrate on geometric questions.
Applications to arithmetic problems will be considered in \cite{HS}.
We will be particularly concerned with automorphisms of Enriques surfaces with a special view towards cohomologically trivial involutions
or finite automorphism groups.

\subsection{Automorphisms}

Often it is possible to compute automorphisms of Enriques surfaces $Y$ on the universal K3 covers $X$.
For this, one looks for automorphisms of $X$ that commute with the Enriques involution.
In this context, the  base change construction from \ref{ss:inv} is particularly convenient.
We continue to employ the notation from Section \ref{s:ell}, confer in particular \ref{ss:quad}, \ref{ss:inv}.
We start by pointing out  one particular feature of the base change construction.
For this we denote by $\Inv(Z)$ the set of involutions of a variety $Z$. 

\begin{Lemma}
\label{Lem:4.1}
In the above setting, there are injections 
\begin{itemize}
\item
$\MW(S)\hookrightarrow\Aut(Y)$, induced by the map $Q\mapsto (\boxplus f_\imath^*Q)\in\Aut(X)$ and 
\item
$\MW(S)\hookrightarrow\Inv(Y)$ induced by the map $Q\mapsto \jj\circ(\boxplus f_\imath^*Q)\in\Inv(X)$.
\end{itemize}
\end{Lemma}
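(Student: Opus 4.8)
The plan is to produce, for each $Q \in \MW(S)$, two automorphisms of the covering K3 surface $X$ that descend to $Y = X/\tau$, and then to verify injectivity of the resulting assignments. The key point is that both candidate maps on $X$ — translation $(\boxplus f_\imath^* Q)$ and its composition with the Nikulin involution $\jj$ — commute with the Enriques involution $\tau = \imath\circ(\boxminus P)$, so they descend to $\Aut(Y)$. For the descent to yield a well-defined automorphism of $Y$ one only needs commutation with $\tau$ up to the deck group, i.e.\ that the automorphism normalises $\langle\tau\rangle$; since $\tau$ has order $2$ and the relevant automorphisms are themselves being tracked, it suffices to check $g\tau = \tau g$ (or at worst $g\tau = \tau g$ composed with $\tau$, which still descends).

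**Key steps.**
First I would record the commutation relations among $\imath$, $\jj$, the hyperelliptic involution $(-1)$, and the translations. From \ref{ss:quad} we have $\jj = \imath\circ(-1)$, and $\imath$ acts on $\MW(X)$ so that $f_\imath^*\MW(S)$ is $\imath^*$-invariant while $f_{\jj}^*\MW(X')$ is $\imath^*$-anti-invariant; dually $f_\imath^*\MW(S)$ is $\jj^*$-anti-invariant and $f_{\jj}^*\MW(X')$ is $\jj^*$-invariant (the table in \ref{ss:quad}). Write $T_Q := (\boxplus f_\imath^* Q)$. The standard fact that an automorphism of an elliptic surface conjugates a translation $T_R$ to $T_{\phi(R)}$ for the induced action $\phi$ gives $\imath\circ T_Q\circ \imath^{-1} = T_{\imath^* Q} = T_Q$ and $\jj\circ T_Q\circ\jj^{-1} = T_{\jj^* Q} = T_{\boxminus Q}$, while $\imath$ commutes with $\jj$. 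Similarly, writing $P = f_{\jj}^* P'$, translation $T_P$ is $\imath^*$-anti-invariant, so $\imath\circ T_P = T_{\boxminus P}\circ\imath$. From these one computes $\tau\circ T_Q = \imath\circ T_{\boxminus P}\circ T_Q = \imath\circ T_Q\circ T_{\boxminus P} = T_Q\circ \imath\circ T_{\boxminus P} = T_Q\circ\tau$, since translations commute with one another and $\imath$ commutes with $T_Q$. Hence $T_Q$ descends to $\bar T_Q\in\Aut(Y)$. For the second map, set $\sigma_Q := \jj\circ T_Q$; using $\jj T_Q = T_{\boxminus Q}\jj$ and $\jj^2 = 1$ one gets $\sigma_Q^2 = \jj T_Q\jj T_Q = T_{\boxminus Q}T_Q = \mathrm{id}$, so $\sigma_Q\in\Inv(X)$ (one should note $\sigma_Q\ne\mathrm{id}$ since $\jj\ne T_{\boxminus Q}$, as $\jj$ has isolated fixed points while a nontrivial translation has none and $\jj\ne\mathrm{id}$). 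Then a parallel computation — now using that $\jj$ and $\imath$ commute and $T_Q$, $T_P$ commute with $\jj$ up to sign inversion — shows $\tau\circ\sigma_Q = \sigma_Q\circ\tau$, so $\sigma_Q$ descends to an involution $\bar\sigma_Q\in\Inv(Y)$.

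**Injectivity.**
For the first map, if $\bar T_Q = \bar T_R$ in $\Aut(Y)$ then $T_Q = T_R$ or $T_Q = \tau\circ T_R$ on $X$. In the first case $Q = R$ in $\MW(S)$ (distinct sections give distinct translations, as $X$ is not of product type). The second case is impossible: $T_Q\circ T_R^{-1} = T_{Q\boxminus R}$ is a translation, hence acts trivially on the holomorphic $2$-form and on the base $\PP^1$, whereas $\tau$ acts as $-1$ on the $2$-form — so $\tau$ cannot equal a translation. For the second map, $\bar\sigma_Q = \bar\sigma_R$ forces $\sigma_Q = \sigma_R$ or $\sigma_Q = \tau\circ\sigma_R$; the first gives $\jj T_Q = \jj T_R$, hence $Q = R$, and the second gives $\jj T_{Q\boxminus R} = \tau\circ\jj = \imath\circ T_{\boxminus P}\circ\jj$, i.e.\ $T_{Q\boxminus R} = \imath\circ T_{\boxminus P}$ after cancelling $\jj$ — again impossible by the $2$-form argument, since the left side acts trivially on the $2$-form and the right side as $-1$. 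This completes injectivity of both maps.

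**Main obstacle.**
The routine manipulations of translations and involutions are mechanical; the one point requiring genuine care is the descent condition — specifically, checking that $T_Q$ and $\sigma_Q$ commute with $\tau$ \emph{on the nose} (or, if only up to composition with $\tau$, that this still yields a well-defined element of $\Aut(Y)$ rather than merely a birational self-map). The sign bookkeeping for how $\imath^*$ and $\jj^*$ act on the various Mordell--Weil summands, together with the identity $\jj = \imath\circ(-1)$, is exactly what makes these commutations work, and getting those signs right is the crux. A secondary subtlety is confirming $\sigma_Q\ne\mathrm{id}$ and, more importantly, that the descended $\bar\sigma_Q$ is a genuine involution of $Y$ (not the identity), which follows because $\sigma_Q$ is not itself equal to $\tau$ — again by the $2$-form argument, since $\sigma_Q = \jj\circ T_Q$ preserves the $2$-form while $\tau$ negates it.
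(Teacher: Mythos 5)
Your proof is correct and takes essentially the same route as the paper: show that the translations and their compositions with $\jj$ commute with $\tau$ (via the $\imath$-invariance of $f_\imath^*\MW(S)$ and the relation $\jj=\imath\circ(-1)$), then rule out coincidences up to composition with $\tau$ --- the paper does this last step by comparing actions on the fibres of $X$ (one map fixes fibres, the other permutes them) rather than on the holomorphic $2$-form, but both arguments work equally well. One minor algebraic slip: from $\jj\circ T_{Q\boxminus R}=\tau\circ\jj$ you cannot ``cancel $\jj$'' from opposite sides (that step is really a conjugation), but this is harmless since the $2$-form comparison already applies directly to the uncancelled identity, whose left side acts as $+1$ and right side as $-1$ on the $2$-form.
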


\begin{proof}
We first show that the above maps are well-defined, i.e.~that the given automorphisms on $X$ commute with the Enriques involution $\tau$.
For the translations by $f_\imath^*Q$, this is immediate from $\tau=\imath\circ(\boxminus P)$ since $\MW(X)$ is abelian and $f_\imath^*Q$ is $\imath$-invariant.
Similarly, we find for the Nikulin involution $\jj$:
\begin{eqnarray*}
\jj\circ\tau & = & \imath\circ(-1)\circ\imath\circ(\boxminus P) =  \imath\circ(-1)\circ(\boxplus P)\circ\imath\\
& = & 
  \imath\circ(\boxminus P)\circ(-1)\circ\imath =
\imath\circ(\boxminus P)\circ \imath\circ(-1) = \tau\circ\jj.
\end{eqnarray*}
Trivially the well-definedness now also follows for the Nikulin involutions $\jj\circ(\boxplus f_\imath^*Q)$.
For the injectivity of the maps from Lemma \ref{Lem:4.1}, we first note that clearly $\MW(S)\hookrightarrow\Aut(X)$.
For the induced automorphisms on $Y$ one compares the above automorphisms and their compositions with $\tau$.
Here the action on the fibres of $X$ is totally different:
one automorphism fixes fibres and the other does not.
Hence $\tau\circ(\boxplus f_\iota^*Q)\neq (\boxplus f_\iota^* R)$ for all $Q\neq R\in\MW(S)$
and likewise for the compositions with $\jj$.
\end{proof}

As a corollary, one obtains that for the automorphism group of $Y$ to be finite,
the same has to hold for the  Mordell-Weil group of $S$.
The same argument applies to all elliptic fibrations with section on $X$ that descend to $Y$.
We will take up this reasoning in \ref{ss:finite}.

The action of involutions on $H^2(Y)$ of Enriques surfaces $Y$ has been studied extensively by Mukai and Namikawa.
We are particularly concerned with their results on cohomologically trivial involutions 
that we shall review below (Theorem \ref{Thm:MN}) 
as they relate neatly to our construction.
To illustrate this, we start by considering a series of one-dimensional families of K3 surfaces
which is of great relevance to other issues as well.

\subsection{Specific families}

In this section we consider one-dimensional families of K3 surfaces $X$ with a primitive embedding
\begin{eqnarray}
\label{eq:fam}
U+2E_8(-1)+\langle -2M\rangle \hookrightarrow \NS(X) \;\;\; (M\geq 1).
\end{eqnarray}
For each $M$, these K3 surfaces form an irreducible moduli space,
since the given lattice has a unique embedding into the K3 lattice 
up to isometries by \cite[Thm.~1.14.4]{N}.
These families play a special role in several instances.
In this section we will investigate them for Enriques involutions
and look for the existence of an interesting further involution.
Later two of these families (for $M=2,4$) will turn up 
in the context of Enriques surfaces with finite automorphism groups (\ref{ss:finite}).
In Section \ref{s:Beau}, we will use the families to construct Enriques surfaces
whose Brauer groups show a special behaviour with respect to the pull-back to the universal K3 cover.
Finally, in \cite{HS} these families will turn up in an arithmetic context within the framework of Shioda-Inose structures.

Our next result states when general members of the above families admit Enriques involutions.

\begin{Proposition}
\label{Prop:M}
Let $M\in\N$ and $X$ be a K3 surface with $\NS(X)= U+2E_8(-1)+\langle -2M\rangle$.
\begin{enumerate}[(i)]
\item
If $M$ is odd, then $X$ does not admit an Enriques involution.
\item 
If $M$ is even, then $X$ admits an Enriques involution of base change type as in \ref{ss:inv}.
\end{enumerate}
\end{Proposition}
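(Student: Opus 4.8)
The plan is to run the two cases through the lattice-theoretic criterion for Enriques involutions recalled in Section~\ref{s:prel}: a complex K3 surface $X$ with $\NS(X)$ admitting a primitive embedding $U(2)+E_8(-2)\hookrightarrow\NS(X)$ carries an Enriques involution if and only if the orthogonal complement of the Enriques lattice contains no $(-2)$-classes. For part~(i), the strategy is to show that no primitive embedding of $U(2)+E_8(-2)$ into $L:=U+2E_8(-1)+\langle-2M\rangle$ can exist at all (or that any such embedding has a $(-2)$-vector in its complement) when $M$ is odd. The key numerical input is a discriminant/length comparison: $U(2)+E_8(-2)$ is $2$-elementary of length $10$ and discriminant $2^{10}$, while $L$ has discriminant $2M$ and discriminant group $\Z/2M\Z$, which for odd $M$ is cyclic of odd order times $\Z/2\Z$ — so it has $2$-length only $1$. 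A primitive sublattice $N\subset L$ of rank $10$ has $\disc(N)\cdot\disc(N^\perp)=\disc(L)\cdot[\,\ldots\,]^2$ up to the glue, and matching the $2$-adic part of $q_L$ against the $2$-adic part of $q_{U(2)+E_8(-2)}\oplus q_{N^\perp}$ forces the $2$-length of $q_L$ to be at least $10-\ell(N^\perp)$; since $\ell_2(q_L)=1$ this is impossible unless $N^\perp$ absorbs almost all of the $2$-torsion, and then a short argument (using that $N^\perp$ is negative definite of small rank $=1$) produces the obstruction. This is exactly the kind of argument Gritsenko supplied, per the acknowledgements, so I would cite Nikulin~\cite{N} (Thm.~1.14.4 and the existence/uniqueness results for embeddings) and carry out the $p$-adic discriminant-form bookkeeping carefully.

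For part~(ii), when $M=2m$ is even, the plan is to exhibit the Enriques involution concretely via the base change construction of \ref{ss:inv}, rather than argue abstractly. By \ref{ss:latt}, the base-change families produce K3 surfaces with $\NS(X)=U+E_8(-2)+\langle-4M'\rangle$ for each $M'\geq1$, coming from a rational elliptic surface $S$ together with a section $P'$ on the quadratic twist $X'$ meeting both ramified $I_0^*$ fibres at non-identity components. The idea is to recognise $U+2E_8(-1)+\langle-2M\rangle$ as $\NS(X)$ for a suitable member of such a family: note $U+2E_8(-1)+\langle-2M\rangle \supset U+E_8(-2)+\langle-2M\rangle$ after folding one copy of $E_8(-1)$ into $E_8(-2)$ via the diagonal-type sublattice, and for $M=2m$ the one-dimensional lattice $\langle-2M\rangle=\langle-4m\rangle$ matches the shape $\langle-4M'\rangle$ with $M'=m$; the overlattice $U+2E_8(-1)$ over $U+E_8(-2)+E_8(-2)$-type data is precisely the extra gluing that distinguishes $X$ from the generic base-change K3. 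Concretely I would say: the family of \ref{ss:latt} with $M'=m$ has $\NS=U+E_8(-2)+\langle-4m\rangle$ generically, but over a one-dimensional sublocus one of the copies of $E_8(-2)$ is refined to $E_8(-1)$ (equivalently, the Mordell-Weil lattice $\MWL(S)=E_8(-1)$ of $S$ gets its full unimodular structure while the twist direction stays at level $2$), yielding exactly $\NS(X)=U+2E_8(-1)+\langle-2M\rangle$; for such $X$ the section $P'$ still meets both fixed fibres at non-identity components, so $\tau$ is fixed-point free and $Y=X/\tau$ is an Enriques surface.

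The main obstacle I expect is part~(ii): identifying the families \eqref{eq:fam} with a refinement of the base-change families requires a precise lattice-embedding computation showing that $U+2E_8(-1)+\langle-2M\rangle$ (for $M$ even) actually occurs as $\NS$ of some base-change K3 of \ref{ss:latt}, \emph{and} that on that locus the distinguished section $P'$ continues to intersect the ramified fibres as required — the latter is a condition on the specialisation that must be checked, not assumed, and is where the parity of $M$ genuinely enters (for odd $M$ the refined lattice simply cannot be glued compatibly with the $I_0^*$ data, matching part~(i)). A cleaner alternative for (ii), which I would pursue in parallel, is purely lattice-theoretic: for $M$ even, directly write down a primitive embedding $U(2)+E_8(-2)\hookrightarrow U+2E_8(-1)+\langle-2M\rangle$ whose orthogonal complement is a rank-$1$ lattice $\langle-2k\rangle$ with $k\geq2$ (so no $(-2)$-vectors), e.g. embedding $E_8(-2)$ into one of the $E_8(-1)$ summands via multiplication-by-$2$-type maps and $U(2)$ diagonally into $U+\langle-2M\rangle$ using $M=2m$, and then invoke the Torelli-based criterion from Section~\ref{s:prel} together with the genericity of $\NS(X)$ to conclude existence of the involution; checking the complement has no roots is then the only remaining computation, and it reduces to the inequality $k\geq2$, which the evenness of $M$ supplies.
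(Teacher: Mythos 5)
Your overall strategy (discriminant-form analysis for (i), base-change geometry or an explicit primitive embedding for (ii)) is in the right spirit, but the decisive steps are missing in both parts. For (i), the length comparison you propose is not by itself a contradiction: the complement $K$ of $U(2)+E_8(-2)$ in $\NS(X)$ has rank $19-10=9$ (not $1$, as you write), and the general bound reads $\ell_2(D(U(2)+E_8(-2)))\leq \mathrm{rank}(K)+\ell_2(D(\NS(X)))$, i.e.\ $10\leq 9+1$ --- satisfied with equality, so no obstruction appears at the level of lengths. The parity of $M$ enters only through the \emph{values} of the discriminant quadratic form: the unique $2$-torsion element of $D(\langle-2M\rangle)\cong\Z/2M\Z$ is $h'/2$ with $q(h'/2)\equiv -M/2\bmod 2\Z$, which is non-integral for odd $M$, whereas every element of $D(U(2)+E_8(-2))\cong(\Z/2\Z)^{10}$ has $q$-value in $\Z\bmod 2\Z$; hence no isotropic glue between the two discriminant groups can involve that $2$-torsion element, the gluing subgroup must avoid the $\langle-2M\rangle$ summand entirely, and only then does the length count become $11>9$ and yield the contradiction. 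The paper packages exactly this by first embedding $\NS(X)$ into $\II_{2,26}$ with complement $E_8(-1)+\langle 2M\rangle$ and showing the composite embedding of $U(2)+E_8(-2)+N$ is primitive (Lemma \ref{Lem:bar-pi}); your "short argument" placeholder is precisely where this computation must go, and without it the proof of (i) is incomplete.

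For (ii), your "cleaner alternative" does produce a valid primitive embedding (embed $E_8(-2)$ diagonally into $2E_8(-1)$ with complement the antidiagonal $E_8(-2)$, which has no roots, and embed $U(2)$ into $U+\langle-2M\rangle$ via $f\mapsto f'$, $g\mapsto mf'+2g'+h'$ for $M=2m$, with rank-one complement $\langle-4m\rangle$), and the Torelli criterion then gives \emph{an} Enriques involution. But the proposition asserts the involution is \emph{of base change type as in \ref{ss:inv}}, and this is what the paper actually proves and uses later (e.g.\ in Lemma \ref{Lem:cti}): the lattice $U+2E_8(-1)+\langle-2M\rangle$ is realised by the Inose-type fibration \eqref{eq:Inose} with two $II^*$ fibres, which is a quadratic base change of the extremal rational elliptic surface with a $II^*$ fibre (so $\MW(S)=0$ and the height-$2M$ section $P$ descends from a section $P'$ of height $M$ on the Nikulin quotient $X'$); one must then \emph{verify} that $P'$ meets both $I_0^*$ fibres of $X'$ at non-identity components, which the paper does by excluding the orthogonal splitting of $\NS(X')$ via its discriminant group and then invoking the height formula $h(P')=4+2(P'\cdot O')-(1\text{ or }2)=M$, where evenness of $M$ forces two correction terms. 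Your first sketch for (ii) gestures at this specialisation but misdescribes it (it is not a one-dimensional refinement of the nine-dimensional family of \ref{ss:latt} in which "$E_8(-2)$ is refined to $E_8(-1)$"; it is the choice of an extremal $S$), and you explicitly defer the intersection check that is the whole content of the parity condition. As it stands, neither version of your part (ii) establishes the base-change form of the involution.
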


\begin{Remark}
For special members of the families \eqref{eq:fam} with greater Picard number, 
one has to be more careful.
For instance, if $M$ is even, one has to exclude the possibility of $(-2)$-vectors in the anti-invariant part of $\NS(X)$ for the involution.
One the other hand, in case $M$ is odd, special members of the family \eqref{eq:fam} 
may admit Enriques involutions
(cf.~\cite{HS}, \cite{Sert}).
\end{Remark}

The overall idea behind Proposition \ref{Prop:M} is simple:
embed $E_8(-2)$ canonically into $2E_8(-1)$.
Then $U(2)$ admits a primitive embedding into $U+\langle -2M\rangle$
if and only if $M$ is even.
Below we give a rigorous proof of Proposition \ref{Prop:M} and exhibit the required Enriques involution of base change type in case of even $M$ (\ref{sss:even}).

\subsubsection{Proof of $(i)$}

We will show that the Enriques lattice $U(2)+E_8(-2)$ does not admit a primitive embedding into $\NS(X)$.
To decide about primitivity, we set up some preliminaries.

Let $\pi: S\hookrightarrow L$ be an embedding of integral non-degenerate lattices,
and let $K=S^\bot\subset L$. 
Denote by $S^\vee$ the dual lattice of $S$ and likewise for $L, K$.
Consider the inclusions
\[
S+K \subset L \subset L^\vee\subset S^\vee + K^\vee.
\]
We shall argue with the discriminant groups $D(S)=S^\vee/S$ etc. 
There is an isotropic embedding
\[
H:=L/(S+K) \hookrightarrow D(S) \times D(K)
\]
with projections
\[
\pi_S: H \to D(S),\;\;\; \pi_K: H \to D(K).
\]
We characterise the primitivity of $\pi$ by the following lemma the proof of which is clear.
\begin{Lemma}
\label{Lem:prim}
The embedding $\pi$ is primitive if and only if both $\pi_S$ and $\pi_K$ are injective.
\end{Lemma}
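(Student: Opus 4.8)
The plan is to reduce primitivity of $\pi$ to a transparent statement about the rational orthogonal decomposition $L\otimes\Q = (S\otimes\Q)\oplus(K\otimes\Q)$, which is legitimate because $S$ and $L$ are non-degenerate (so $S\cap K=0$, the dimensions add up, and the form is non-degenerate on each summand, in particular $D(K)$ makes sense). Concretely, I would first record that every $\ell\in L$ has a unique decomposition $\ell=\ell_S+\ell_K$ with $\ell_S\in S\otimes\Q$ and $\ell_K\in K\otimes\Q$; pairing against $S$ resp.\ $K$ and using integrality of the form on $L$ shows $\ell_S\in S^\vee$ and $\ell_K\in K^\vee$, so that $\pi_S(\bar\ell)$ and $\pi_K(\bar\ell)$ are by construction the classes of $\ell_S$ in $D(S)$ and of $\ell_K$ in $D(K)$. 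I would also isolate at the outset the observation that $K=S^\bot$ is \emph{automatically} primitive in $L$: if $x\in L$ and $nx\in K$ for some $n\geq 1$, then $n\langle x,s\rangle=0$ for all $s\in S$, hence $\langle x,s\rangle=0$ and $x\in K$. Recall finally that $\pi$ being primitive means exactly $L\cap(S\otimes\Q)=S$.

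For the implication ``$\pi_S,\pi_K$ injective $\Rightarrow$ $\pi$ primitive'' I would use only the injectivity of $\pi_K$: given $x\in L\cap(S\otimes\Q)$ one has $x_K=0$, hence $\pi_K(\bar x)=0$, hence $\bar x=0$ in $H=L/(S+K)$, so $x=s+k$ with $s\in S$ and $k\in K$; comparing $K$-components forces $k=0$ and thus $x=s\in S$. For the converse, assume $\pi$ primitive, so $S$ is primitive (and $K$ is primitive as well, always). If $\bar\ell\in\ker\pi_S$ then $\ell_S\in S$, whence $\ell_K=\ell-\ell_S\in L\cap(K\otimes\Q)=K$ by primitivity of $K$, so $\bar\ell=0$; interchanging the roles of $S$ and $K$ and invoking primitivity of $S$ instead gives $\ker\pi_K=0$ in the same way. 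Hence both projections are injective.

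I do not expect a genuine obstacle: the statement is bookkeeping around the decomposition $\ell=\ell_S+\ell_K$. The only points that deserve a moment's care are that the $S$- and $K$-components of an integral vector really lie in the dual lattices $S^\vee$ and $K^\vee$ (so that $\pi_S$ and $\pi_K$ are well defined on $H$), and that $K=S^\bot$ is always primitive; the latter shows $\pi_S$ is injective irrespective of whether $S$ is primitive, so that the actual content of the equivalence is carried by $\pi_K$.
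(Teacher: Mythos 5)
Your proof is correct; the paper states the lemma without proof (``the proof of which is clear''), and your argument is exactly the standard bookkeeping it has in mind, via the orthogonal rational decomposition $\ell=\ell_S+\ell_K$ with $\ell_S\in S^\vee$, $\ell_K\in K^\vee$. Your side observation that $\pi_S$ is automatically injective --- because $K=S^\bot$ is always primitive in $L$ --- is also correct and accurately identifies $\pi_K$ as carrying the real content of the equivalence.
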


Now we consider an embedding
\begin{eqnarray}
\label{eq:pi-no}
\pi: U(2) + E_8(-2) \hookrightarrow \NS(X)= U+2E_8(-1)+\langle -2M\rangle.
\end{eqnarray}
with $M$ odd. 
We assume that $\pi$ is primitive. 
In order to establish a contradiction, we introduce an auxiliary primitive embedding
\[
i: \NS(X) = U + 2E_8(-1) +\langle -2M\rangle \to 2U+3E_8(-1)  = {\II}_{2,26}
\]
by specifying a primitive element $h$ with $h^2=-2M$ in $U+E_8(-1)$. For instance, we can choose $h=e-Mf\in U$ in the standard basis $e,f$ of $U$.
Note that the orthogonal complement of the image of $i$ is
\[
{\II}_{2,26}\supset N:=\mbox{im}(i)^\bot \cong E_8(-1) + \langle 2M\rangle.
\]
Composing $\pi$ and $i$, we derive an embedding
\[
\bar\pi: U(2)+E_8(-2)+N \hookrightarrow {\II}_{2,26}.
\]
\begin{Lemma}
\label{Lem:bar-pi}
$U(2)+E_8(-2)+N$ does not have a non-trivial overlattice of the same rank in ${\II}_{2,26}$, i.e.~$\bar\pi$ is primitive.
\end{Lemma}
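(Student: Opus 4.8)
The plan is to identify the saturation of $\mathrm{im}(\bar\pi)$ in $\II_{2,26}$ and to show that it is trivial, exploiting the orthogonal decomposition $\mathrm{im}(\bar\pi)=A\oplus B$ with $A:=(i\circ\pi)(U(2)+E_8(-2))$ and $B:=N$, together with the hypothesis that $M$ is odd. First I would observe that both summands are primitive in $\II_{2,26}$: $\pi$ is primitive in $\NS(X)$ by assumption and $i$ is primitive, so $A$ is primitive; and $B=N$ is primitive, being by construction the orthogonal complement of the primitive sublattice $i(\NS(X))$. Now let $T'$ be any lattice with $A\oplus B\subseteq T'\subseteq\II_{2,26}$ and $\mathrm{rk}\,T'=\mathrm{rk}(A\oplus B)$. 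Then $T'/(A\oplus B)$ is an isotropic subgroup of $D(A)\oplus D(B)$, and as in Lemma \ref{Lem:prim} its projections to $D(A)$ and to $D(B)$ are both injective: a class whose $A$-component already lies in $A$ is represented by some $v=\alpha+\beta$ with $\alpha\in A$ and $\beta\in B_\Q\cap\II_{2,26}=B$ (using primitivity of $B$), hence is zero, and symmetrically for $D(B)$.

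Next I would use the arithmetic of the two discriminant groups. Since $B\cong E_8(-1)+\langle 2M\rangle$ we have $D(B)\cong\Z/2M$, which is cyclic, while $D(A)\cong D(U(2))\oplus D(E_8(-2))$ is $2$-elementary. A group that embeds simultaneously into a cyclic group and into a $2$-elementary group is cyclic of order $1$ or $2$, so $|T'/(A\oplus B)|\le 2$, and it remains to rule out order $2$. Assuming it, choose $v\in T'$ with $2v\in A\oplus B$ but $v\notin A\oplus B$. Writing $v=\alpha+\beta$ with $\alpha\in A^\vee$, $\beta\in B^\vee$ (legitimate since $v\in\II_{2,26}\subseteq(A\oplus B)^\vee$), the classes $\bar\alpha\in D(A)$ and $\bar\beta\in D(B)$ have order $2$; in particular $\bar\beta$ is the unique element of order $2$ in $\Z/2M$, supported on the $\langle 2M\rangle$-summand.

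The decisive point, and the only place where the parity of $M$ is used, is a parity obstruction on discriminant forms. As $v$ lies in the even lattice $\II_{2,26}$ and $\alpha\perp\beta$, reduction mod $2\Z$ gives $q_A(\bar\alpha)+q_B(\bar\beta)\equiv v^2\equiv 0\pmod{2\Z}$. On the $B$-side, the order-$2$ element of $D(\langle 2M\rangle)$ has square $M^2/(2M)=M/2\pmod{2\Z}$, which is not an integer because $M$ is odd, so $q_B(\bar\beta)\notin\Z/2\Z$. On the $A$-side, however, both $U(2)$ and $E_8(-2)$ are of the form $L(2)$ with $L$ even unimodular; for such a lattice $D(L(2))\cong L/2L$ with $q_{L(2)}(\tfrac12 x)=\tfrac12(x,x)_L\in\Z$, so $q_A$ takes values in $\Z/2\Z$ only. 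This contradicts $q_A(\bar\alpha)\equiv -q_B(\bar\beta)$, so the order-$2$ case is impossible; hence $T'=A\oplus B=\mathrm{im}(\bar\pi)$, and $\bar\pi$ is primitive. The part deserving genuine attention is this last parity computation, together with the check that the oddness of $M$ is invoked exactly there and nowhere else; the rest is routine bookkeeping with primitive sublattices and discriminant groups.
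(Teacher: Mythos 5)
Your argument is correct and follows essentially the same route as the paper's proof: both reduce to the glue group of $A\oplus B$ in its saturation, use injectivity of both projections to force this group to be a subgroup of the $2$-elementary group $D(U(2)+E_8(-2))$ and of the cyclic group $D(N)\cong\Z/2M\Z$, hence trivial or $\Z/2\Z$, and then kill the order-$2$ case by the same parity obstruction ($q$ integral on $D(U(2)+E_8(-2))$ versus $q\equiv M/2\bmod 2\Z$ on the order-$2$ element of $D(N)$, incompatible with isotropy when $M$ is odd). The only difference is cosmetic: you derive the integrality of $q_A$ from the general fact about $L(2)$ for $L$ even unimodular, where the paper quotes $D(U(2))^5$ directly.
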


\begin{proof}
Assume there is such an overlattice $\tilde N$.
Then we have an embedding
\[
\bar N:=\tilde N/(U(2)+E_8(-2)+N) \hookrightarrow \underbrace{D(U(2)+E_8(-2))}_{=D(U(2))^5} + D(N).
\]
Since $\pi$ is assumed to be primitive,
$U(2)+E_8(-2)$ is primitive in $\tilde N$.
Hence Lemma \ref{Lem:prim} implies that $\bar N$ embeds into $D(U(2))^5 \cong (\Z/2\Z)^{10}$.
in particular, $\bar N$ must be a $2$-group.
But the only $2$-group in $D(N)=\Z/2M\Z$ is generated by $\frac 12 \bar{h'}$ 
where $h'$ generates $\langle 2N\rangle$.
Hence $\bar N=\Z/2\Z$.
Since $M$ is odd,  we have
\[
\left(\frac 12\bar{h'}\right)^2 \equiv \frac 14 2M \not\equiv 0,1\mod 2\Z.
\]
However, every element $\bar r$ in $D(U(2))\cong(\Z/2\Z)^2$ has
\[
\bar r^2 \equiv 0, 1\mod 2\Z.
\]
This is a contradiction since the embedding $\bar M\hookrightarrow D(U(2)+E_8(-2)) + D(N)$ is isotropic.
\end{proof}

We continue with the primitive embedding $\bar\pi$.
Let $K:=\mbox{im}(\bar\pi)^\bot\subset {\II}_{2,26}$.
Up to sign, it follows that
\[
D(K) = D(U(2)+E_8(-2)) + D(N) \cong (\Z/2\Z)^{10} + \Z/2M\Z.
\]
The minimal number of generators is 11. 
Since $K$ has rank 9, this gives a contradiction to the first assumption that $\pi$ is primitive.
This completes the proof of Proposition \ref{Prop:M} $(i)$. \qed

\begin{Remark}
By \eqref{eq:Ohashi} the Enriques involution of base change type in \ref{ss:inv}
generally leads to K3 surfaces with $\NS=U+E_8(-2)+\langle-4M\rangle$.
It follows from Proposition \ref{Prop:M} $(i)$
that a K3 surface with $\NS=U+E_8(-2)+\langle-2M\rangle$ for odd $M\in\N$
cannot admit an Enriques involution,
since the latter lattice has an obvious primitive embedding into $U+2E_8(-1)+\langle -2M\rangle$.
This was stated without proof in \cite{Ohashi}.
\end{Remark}

\subsubsection{Proof of $(ii)$}
\label{sss:even}

The given decomposition of $\NS(X)$ corresponds to an elliptic fibration on $X$ with section by \ref{ss:ell}.
By \ref{ss:MWL}
this fibration has two singular fibres of type $II^*$ as sole reducible fibres.
Moreover there is a section $P$ of height $2M$ accounting for the 19th generator of $\NS(X)$.

It follows from Tate's algorithm \cite{Tate} 
that elliptic K3 surfaces with the above singular fibres admit the following Weierstrass form after normalisation:
\begin{eqnarray}
\label{eq:Inose}
X:\;\;\; y^2 = x^3 + At^4x+t^5(t^2+Bt+1).
\end{eqnarray}
These elliptic fibrations are related to Shioda-Inose structures (cf.~\cite{Mo}, \cite{Sandwich}).
This connection has great relevance for the arithmetic of Enriques surface as we will exploit in \cite{HS}.
Here we only need that $X$ admits an involution $\imath$ whose quotient is a rational elliptic surface $S$.
Explicitly $\imath$ can be given by
\[
\imath: (x,y,t) \mapsto (x/t^4, y/t^6, 1/t).
\]
In particular, we are in the situation of Section \ref{s:ell}.
Here $S$ has a fibre of type $II^*$ and thus $\MW(S)=\{0\}$.
By \ref{ss:quad}, we deduce $\MW(X)\cong\MW(X')$ for the Nikulin  quotient $X'=X/\jj, \jj=\imath\circ(-1)$.
Thus $P$ is induced by a section $P'\in\MW(X')$.
By \cite[Prop.~8.12]{ShMW}, $P'$ has height $h(P')=h(P)/2=M$.

\textbf{Claim:} $P\cap O\cap\mbox{Fix}(\imath)=\emptyset$.

We shall argue with the Nikulin quotient $X'$.
Recall that $X'$ has $I_0^*$ fibres underneath the (smooth) fixed fibres of $\imath$.
Hence the claim is equivalent to the fact that $P'$ meets both these fibres at non-identity components.
In general, we have 
\[
\NS(X') = U + E_8(-1) + \langle 2D_4(-1), \psi(P')\rangle
\]
where $\psi$ denotes the orthogonal projection in $\NS(X')$ with respect to $U$ as in \ref{ss:MWL}.
From the discriminant group we infer that $P'$ meets at least one $I_0^*$ fibre at a non-identity component.
Otherwise $\NS(X')$ would split into orthogonal summands
\[
\NS(X') = U + E_8(-1) +2D_4(-1) +  \langle  \psi(P')\rangle.
\]
Hence the discriminant group of $\NS(X')$ would give
\[
D(\NS(X')) \supset D(D_4)^2 =  (\Z/2\Z)^4.
\]
Since the lengths of the discriminant groups of $\NS(X')$ and $T(X')$ coincide,
we derive a contradiction from the fact that $T(X')$ has rank three.

Now we only have to distinguish whether $P'$ meets one or both $I_0^*$ fibres at non-identity components.
By \ref{ss:MWL} the height depends on the corresponding correction terms as follows:
\[
h(P') = 4 + 2 (P'\cdot O') - \begin{cases} 1, & \text{one non-id comp},\\
2, & \text{two non-id comp's.}\end{cases}
\]
In the present situation $h(P')=M$ is assumed to be even, 
so $P'$ has to meet both $I_0^*$ fibres at non-identity components.
Hence $P$ meets the fixed fibres of $\imath$ at non-zero two-torsion points, and the claim follows.
By \ref{ss:inv}, we obtain an Enriques involution $\tau=\imath\circ(\boxplus P)$ on $X$.
\qed

\begin{Remark}
For odd $M>1$, the above proof goes through until the last paragraph.
But then the parity of $M$ implies that $P'$ meets a $I_0^*$ fibre at an identity component.
Thus $P$ and $O$ intersect on the corresponding fibre of $X$ and $\tau$ fixes that fibre.
As predicted by Proposition \ref{Prop:M} $(i)$, $\tau$ does not define an Enriques involution on $X$.
\end{Remark}


%

\subsection{Enriques surfaces with cohomologically trivial involution}
\label{ss:6.1}
\label{ss:M}

We start by investigating the families of elliptic K3 surfaces from Proposition \ref{Prop:M} $(ii)$:

\begin{Lemma}
\label{Lem:cti}
Let $X$ be a K3 surface with $\NS(X)=U+2E_8(-1)+\langle -4M\rangle (M\geq 1)$.
In \ref{sss:even}, we constructed an Enriques involution $\tau$ on $X$ of base change type as in \ref{ss:inv}.
The Nikulin involution $\jj$ on $X$ induces an involution $\sigma$ on the Enriques surface $Y=X/\tau$ that acts trivially on $H^2(Y,\Z)$.
\end{Lemma}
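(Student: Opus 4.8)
The strategy is to verify that $\jj$ descends to a well-defined involution $\sigma$ on $Y = X/\tau$, and then to check that $\sigma$ acts trivially on $H^2(Y,\Z)$ by exhibiting enough invariant divisor classes. First I would recall from the proof of Lemma~\ref{Lem:4.1} that $\jj$ commutes with $\tau$ (indeed this is exactly the identity $\jj\circ\tau = \tau\circ\jj$ verified there), so $\jj$ descends to an involution $\sigma\in\Aut(Y)$. Since $\jj$ preserves the holomorphic $2$-form on $X$, its image $\sigma$ likewise acts trivially on $H^0(Y,\omega_Y^{\otimes 2})$; but the cohomological triviality we want is on $H^2(Y,\Z) = \Num(Y) \oplus \text{(torsion)}$, and since any automorphism fixes $K_Y$, the content is that $\sigma^*$ is the identity on $\Num(Y) \cong U + E_8(-1)$.

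\textbf{Key steps.} The plan is to use the explicit Weierstrass model \eqref{eq:Inose} and the action of $\jj$ given there together with the relation $\NS(Y)_f \cong \Num(Y) = U + E_8(-1)$. I would identify generators of $\Num(Y)$ geometrically: the class $F$ of a general fibre of the induced elliptic fibration on $Y$, the bisection class $B$ coming from $O+P$ (which generate the hyperbolic plane $U$ after suitable combination), and the components of the reducible fibres that furnish the $E_8(-1)$ summand. On $X$, the fibration has two $II^*$ fibres; one of them lies over a fixed point of $\imath$ (and hence of $\jj$ in the relevant sense), and $\jj$ acts on the $X$-side. The point is that $\jj$ fixes the general fibre $F$ and acts on the zero section and $P$ in a controlled way. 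Since $\jj = \imath\circ(-1)$ and $\imath^*$ sends $f_\imath^*\MWL(S)$ to itself and $f_{\jj}^*\MWL(X')$ to its negative (Table in \ref{ss:quad}), on $X$ we have $\MWL(S) = \{0\}$ (as $S$ carries a $II^*$ fibre), so the entire "section part" of $\NS(X)$ comes from $X'$ and is $\jj^*$-invariant up to the hyperelliptic sign. Pushed down to $Y$, the hyperelliptic sign is absorbed because $O$ and $-P$ are already identified under $\tau$. I would then check, class by class on the generators $F$, $B$, and the fibre components, that $\sigma^*$ fixes each of them in $\Num(Y)$; since $\Num(Y)$ is generated by these and the intersection form is nondegenerate and unimodular, $\sigma^*$ is forced to be the identity on all of $H^2(Y,\Z)$ modulo torsion, hence on all of $H^2(Y,\Z)$.

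\textbf{Main obstacle.} The delicate point is the bookkeeping of signs: $\jj$ itself is \emph{not} cohomologically trivial on $X$ — a Nikulin involution acts as $-1$ on an $E_8(-2)$-sublattice of $H^2(X,\Z)$ — so the triviality of $\sigma$ on $Y$ is a genuinely "quotient" phenomenon, coming from the fact that $\tau$ glues together exactly the $\imath$-invariant and $\imath$-anti-invariant pieces in a way that neutralizes the $\jj$-action after descent. Making this precise requires identifying $\pi^*\Num(Y)$ with the $\tau^*$-invariant part of $\NS(X)$ and showing that $\jj^*$ restricts to the identity there. I expect this identification — tracking how $\jj^*$ acts on $\pi^*(U + E_8(-1)) \subset U(2)+E_8(-2) \subset \NS(X)$ and confirming invariance — to be the crux; the rest reduces to the standard fact that an automorphism of an Enriques surface trivial on $\Num(Y)$ and fixing $K_Y$ is cohomologically trivial. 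An alternative, cleaner route for this crux is to use the diagram in \ref{ss:quad}: the induced fibration on $Y$ has the same generic fibre as that of the rational elliptic surface $S$, and $\sigma$ acts fibrewise as translation by a two-torsion section composed with $[-1]$; since such an action preserves every fibre and permutes a set of disjoint sections/multisections whose classes span $\Num(Y)$, and since on the base it is the identity, one reads off directly that $\sigma^*=\mathrm{id}$ on $\Num(Y)$.
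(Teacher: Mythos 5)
Your overall strategy --- descend $\jj$ to $\sigma$ via the commutation $\jj\circ\tau=\tau\circ\jj$, then exhibit $\sigma$-invariant classes generating the cohomology --- is the same as the paper's, but there is a genuine gap at the decisive step. You reduce the claim to showing that $\sigma^*$ is the identity on $\Num(Y)$, arguing that "any automorphism fixes $K_Y$" and concluding "hence on all of $H^2(Y,\Z)$". This implication is false: $H^2(Y,\Z)\cong\Z^{10}\oplus\Z/2\Z$, and an automorphism can act as the identity on the free quotient $\Num(Y)$ and on the torsion subgroup while still sending $x\mapsto x+\epsilon(x)K_Y$ for a nonzero homomorphism $\epsilon$. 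This is exactly the distinction between \emph{numerically} trivial and \emph{cohomologically} trivial involutions that the paper emphasizes immediately after the lemma (and which is the whole point of the Mukai--Namikawa classification). Your argument, as written, would only establish numerical triviality.

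The paper closes this gap by working with actual curves rather than with $\Num(Y)$: it uses the bisection $R$, the nine components of the $II^*$ fibre of $Y$, \emph{and the two reduced multiple fibres} $G_1,G_2$ (with $F_i=2G_i$ and $K_Y=G_1-G_2$). Each of these curves is fixed by $\sigma$, and a discriminant computation shows that $R$ plus the $II^*$ components span a sublattice $L$ of discriminant $-4$, so that adjoining $G_1$ gives a unimodular (hence full) rank-$10$ lattice, while $G_1-G_2$ supplies the torsion; together these classes generate $H^2(Y,\Z)$ integrally, forcing $\sigma^*=\mathrm{id}$. Your proposed generators ($F$, the bisection, and the fibre components) omit the half-fibres entirely; they only span the index-two sublattice $L$, and even granting invariance on $L$ one cannot control the torsion ambiguity without knowing that $G_1$ and $G_2$ are each fixed. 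Two smaller inaccuracies: the two $II^*$ fibres of \eqref{eq:Inose} sit at $t=0,\infty$ and are \emph{swapped} by $\imath$ (the fixed fibres at $t=\pm1$ are smooth), so neither lies over a fixed point of $\imath$; and the fibrewise action of $\sigma$ is $x\mapsto P-x$ for the section $P$ of height $4M$, not translation by a two-torsion section composed with $[-1]$ (there is no two-torsion here since $\MW(S)=\{0\}$) --- it is an involution on each fibre for a different reason.
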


\begin{proof}
Consider
the elliptic fibration on $Y$ with a fibre of type $II^*$  induced from the elliptic fibration \eqref{eq:Inose} on $X$.
Recall that fibres of type $II^*$ do not admit any automorphisms and sections always meet the identity component (the only simple component).
Hence $\sigma$ fixes the reducible fibre of $Y$ componentwise,
and it also fixes the bisection $R$ on $Y$ that splits on $X$ into the sections $O$ and $-P$.
Finally let us write the two multiple fibres of the elliptic fibration by $F_1=2G_1$ and $F_2=2G_2$.
Since $\imath$ and $\jj$ induce the same opration on the base curve $\PP^1$,
$\sigma$ fixes the multiple fibres. 

We claim that 
the curves $R, G_1, G_2$ and the components of the $II^*$ fibre generate $H^2(Y,\Z)$.
Since $\sigma$ fixes each of these curves, the claim implies Lemma \ref{Lem:cti}.

To see the claim,
consider the sublattice $L$ of $H^2(Y,\Z)$ generated by $R$ and the components of the $II^*$ fibre.
Since $R$ meets the simple fibre component (twice), $L$ decomposes essentially as in the case of a jacobian elliptic fibration:
\[
L\cong  E_8(-1) + \langle F, R\rangle
=
E_8(-1) + \begin{pmatrix}
0 & 2\\
2 & 2(M-2)
\end{pmatrix}.
\]
In particular, $L$ has discriminant $-4$. 
Now clearly neither $G_1$ nor $G_2$ are contained in $L$,
but $2G_1, 2G_1\in L$.
Hence $L$ has index two in the overlattice $L'=\langle L, G_1\rangle$.
Therefore $L'$ is unimodular.
Since $K_Y=G_1-G_2$ in $H^2(Y,\Z)$,
the given curves generate $H^2(Y,\Z)$ as claimed.
\end{proof}

An involution $\sigma$ on an Enriques surface $Y$ 
which acts trivially on $H^2(Y,\Z)$ is called \emph{cohomologically trivial}.
Note that by the Torelli theorem
cohomologically trivial involutions cannot occur on K3 surfaces.
We have seen examples of such Enriques surfaces in Lemma \ref{Lem:cti}.
The first construction of Enriques surfaces with cohomologically trivial involution was given by Barth and Peters in \cite{BP}.
They exhibited a two-dimensional family $\mathfrak X$ of K3 surfaces
with Enriques involution whose quotients have a cohomologically trivial involution.

On the other hand, Enriques surfaces may also admit numerically trivial involutions
which act trivially on $H^2(Y,\Q)$, but not on $H^2(Y,\Z)$.
The classification of Enriques surfaces with cohomologically or numerically trivial
involutions was initiated by Mukai and Namikawa in \cite{MN}.
After some additions (due to Kond\=o  \cite[Thm.~(1.7)]{K-E}, see \cite{Mukai}) and corrections \cite{Mukai-2}, the classification is as follows:
Every Enriques surface with a numerically trivial involution comes from a Kummer surface of product type in relation with either a Liebermann involution or a Cremona involution.
Enriques surfaces with cohomologically trivial involutions are determined as follows:

\begin{Theorem}
\label{Thm:MN}
Let $Y$ be an Enriques surface with a cohomologically trivial involution.
Then the doubly covering K3 surface belongs to the Barth-Peters family $\mathfrak X$.
\end{Theorem}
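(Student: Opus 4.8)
The plan is to reduce the statement to a lattice-theoretic assertion about the fixed locus of a cohomologically trivial involution and then match it to the Néron–Severi lattice characterising the Barth–Peters family $\mathfrak X$. First I would recall from Mukai–Namikawa \cite{MN} (with the Kond\=o addition \cite{K-E} and the correction \cite{Mukai-2}) that a cohomologically trivial involution $\sigma$ on $Y$ is never free: its fixed locus is non-empty and, by the holomorphic Lefschetz formula together with the triviality of the action on $H^2(Y,\Z)$, consists of a prescribed configuration of curves — generically two disjoint smooth rational curves, or the degenerations thereof. The key structural input is that since $\sigma$ acts trivially on $\Num(Y)=U+E_8(-1)$, every elliptic fibration on $Y$ is $\sigma$-invariant, and on a fibration with a fibre of additive type the involution must fix that fibre componentwise and fix any rational bisection. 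This is exactly the mechanism exploited in Lemma \ref{Lem:cti}, run in reverse.

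The main steps I would carry out are as follows. \emph{Step 1:} Lift $\sigma$ to an involution $\tilde\sigma$ on the covering K3 surface $X$ commuting with the Enriques involution $\tau$; then $\tilde\sigma$ (or $\tilde\sigma\circ\tau$) acts as $+1$ on the transcendental lattice $T(X)$, because $\sigma$ is cohomologically trivial downstairs and $T(X)$ injects into $H^2(X)^{\tau}$, so $\tilde\sigma$ is a symplectic (Nikulin) involution on $X$. \emph{Step 2:} Invoke Nikulin's classification \cite{N0}: a symplectic involution on a K3 surface has exactly eight fixed points and forces $E_8(-2)\hookrightarrow\NS(X)$ into the anti-invariant part; combined with the primitive embedding $U(2)+E_8(-2)\hookrightarrow\NS(X)$ of \eqref{eq:U(2)} coming from $\tau$, the presence of a second commuting involution $\tilde\sigma$ pins down a large sublattice of $\NS(X)$. \emph{Step 3:} Analyse the commuting pair $(\tau,\tilde\sigma)$ acting on $H^2(X,\Z)\cong\Lambda$: the group they generate is $(\Z/2\Z)^2$, and decomposing $\Lambda$ into the four eigenlattices, the condition that the $\tau$-quotient is Enriques while $\tilde\sigma$ descends cohomologically trivially forces the invariant lattice to contain the Enriques lattice and the remaining eigenspaces to be highly constrained $2$-elementary lattices. \emph{Step 4:} Identify the resulting minimal possibility for $\NS(X)$ with the Néron–Severi lattice of the Barth–Peters family — concretely, the two-dimensional family $\mathfrak X$ has $\NS(X)\supseteq U+E_8(-2)+E_8(-2)\cdot(\text{a rank-}2\text{ piece})$, i.e.\ Picard number $18$ — and use uniqueness of the primitive embedding into $\Lambda$ (via \cite[Thm.~1.14.4]{N}, as used repeatedly in this paper) together with the Torelli theorem to conclude that $X$, hence $Y$, lies in $\mathfrak X$.

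The hard part will be Step 3–4: showing that the lattice-theoretic constraints coming from the existence of \emph{both} a fixed-point-free involution $\tau$ and a commuting symplectic involution $\tilde\sigma$ with the cohomological-triviality property do not merely bound $\NS(X)$ but actually force it into the single family $\mathfrak X$, rather than allowing a larger web of possibilities. Here I expect to lean on the explicit geometry: by the argument of Lemma \ref{Lem:cti} the quotient $Y$ carries an elliptic fibration with a $II^*$ (or $I_n^*$, $III^*$, $IV^*$) fibre and a rational bisection fixed by $\sigma$, so the curves listed there generate $H^2(Y,\Z)$; tracking this configuration back up to $X$ and comparing discriminant forms should leave exactly the Barth–Peters lattice. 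A secondary subtlety is handling the degenerate shapes of $\mathrm{Fix}(\sigma)$ allowed by the Mukai–Namikawa list, which I would dispatch by a uniform semicontinuity argument: each degeneration is a specialisation within $\mathfrak X$, so membership in the family is a closed condition already verified on the generic stratum.
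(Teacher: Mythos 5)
First, a point of reference: the paper does not prove Theorem \ref{Thm:MN} at all. It is quoted as the outcome of the classification of Mukai--Namikawa \cite{MN}, completed by Kond\=o \cite{K-E} and corrected in \cite{Mukai-2}. So there is no in-paper proof to compare against, and your proposal has to be judged as an attempted proof of a known, and genuinely hard, classification theorem.

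Judged that way, it has real gaps. The most concrete error is in Step 1: you justify that $\tilde\sigma$ (or $\tilde\sigma\circ\tau$) is symplectic by saying that $T(X)$ injects into $H^2(X,\Z)^{\tau}$. This is false: the $\tau^*$-invariant part of $H^2(X,\Z)$ is $\pi^*H^2(Y,\Z)$, which is entirely algebraic (the embedded copy of $U(2)+E_8(-2)$ from \eqref{eq:U(2)}), so $T(X)$ lies in the \emph{anti}-invariant part and $\tau^*=-1$ there. One can still conclude that one of the two lifts, possibly composed with $\tau$, is symplectic by looking at the action on the holomorphic $2$-form, but you must also rule out or handle the possibility that a lift of $\sigma$ has order $4$ rather than $2$ (a lift $\tilde\sigma$ only satisfies $\tilde\sigma^2\in\{\mathrm{id},\tau\}$), which you do not address. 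More seriously, Steps 3--4 --- showing that the existence of the commuting pair $(\tau,\tilde\sigma)$ together with triviality of $\sigma^*$ on all of $H^2(Y,\Z)$ forces $\NS(X)$ to contain the Barth--Peters lattice $U+2E_7(-1)+2A_1(-1)$ of \eqref{eq:triv}, equivalently $T(X)\hookrightarrow U+U(2)$ --- are exactly the content of the theorem, and your outline only asserts that the constraints ``should leave exactly the Barth--Peters lattice.'' Running Lemma \ref{Lem:cti} in reverse does not accomplish this: that lemma verifies cohomological triviality for one specific configuration, whereas the converse requires excluding every other shape of the fixed locus and every other eigenlattice decomposition of $\Lambda$ under $(\Z/2\Z)^2$; that case analysis is the hard core of \cite{MN} and was delicate enough that the original argument needed later corrections. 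Finally, the lattice you name in Step 4 as characterising $\mathfrak X$ is not the N\'eron--Severi lattice the paper computes for the Barth--Peters family, so even the target of your reduction is misidentified.
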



We will review the Barth--Peters family in \ref{ss:BP}.
Then we will relate it to our ideas from Section \ref{s:ell} by exhibiting the Enriques involution and the cohomologically trivial involution in terms of the base change construction from \ref{ss:inv}.
%
%
By Theorem \ref{Thm:MN} the one-dimensional families from Lemma \ref{Lem:cti} for each $M>0$ have to be subfamilies of the Barth-Peters family.
For $M=1,2$, we will work out this relation explicitly.

\subsection{Barth-Peters family}
\label{ss:BP}

In \cite{BP}, Barth and Peters exhibited a two-dimensional family of Enriques surfaces with cohomologically trivial involution as follows.
Here we work with the model from \cite{MN}.

Consider two $\PP^1$'s with affine coordinates $x, y$ 
and the four lines $\{x=\pm 1\}$ and $\{y=\pm 1\}$ on the product $\PP^1\times\PP^1$.
Let $E$ be an elliptic curves in $\PP^1\times \PP^1$,
i.e.~$E$ is given by a polynomial $g$ of bidegree $(2,2)$.
Assume that the curve $E$ and the four lines intersect in ordinary rational double points.
Then the double cover of $\PP^1\times\PP^1$ branched along the curve $E$ and the specified lines defines a K3 surface $X$ as its minimal resolution.
Before resolving singularities, we can write affinely
\begin{eqnarray}
\label{eq:BP}
X:\;\; w^2 = (x^2-1)(y^2-1) g(x,y).
\end{eqnarray}
The lines were already set up in such a way that they are invariant under the following involution 
\[
\Aut(\PP^1\times\PP^1)\ni(-1,-1): (x,y) \mapsto (-x,-y).
\]
Assume that the elliptic curve $E$ is also fixed by $(-1,-1)$, but that it does not contain any fixed points.
Then we derive the following Enriques involution $\tau$ on $X$ as in Example \ref{Ex:4,4}.\ref{it:5}:
\[
\Aut(X)\ni\tau: (x,y,w) \mapsto (-x,-y,-w).
\]
As $\tau$ commutes with the deck transformation and with the involution induced by $(-1,-1)$,
the Enriques surface $Y=X/\tau$ inherits an involution $\sigma$.
We now specialise to the case where the elliptic curve $E$
contains the four nodes $(\pm 1, \pm 1)$ where the lines intersect.
The covering K3 surfaces $X$ form a two-dimensional family
that we shall refer to as the Barth-Peters family.
We shall now analyse how this construction fits into our considerations in Section \ref{s:ell}
and describe an alternative way to see that under the given assumption $\sigma$ is cohomologically trivial.

We first study the N\'eron-Severi lattice of the general K3 surface $X$.
By assumption, the double covering gives rational double points of type $D_4$ at the nodes $(\pm 1, \pm 1,0)$ of \eqref{eq:BP}.
The minimal resolution $X$ replaces them by a $D_4$ configuration of $(-2)$-curves.
Together with the pre-images of the lines,
they form the diagram of $(-2)$-curves depicted in Figure \ref{Fig:BP}.

\begin{figure}[ht!]
\setlength{\unitlength}{.45in}
\begin{picture}(5,5)(-0.5,-0.5)

\thinlines

\multiput(0,0)(1,0){5}{\circle*{.1}}
\multiput(0,4)(1,0){5}{\circle*{.1}}
\multiput(0,1)(0,1){3}{\circle*{.1}}
\multiput(4,1)(0,1){3}{\circle*{.1}}
\multiput(2,1)(0,2){2}{\circle*{.1}}
\multiput(1,2)(2,0){2}{\circle*{.1}}

\put(0,0){\line(1,0){4}}
\put(0,4){\line(1,0){4}}
\put(0,0){\line(0,1){4}}
\put(4,0){\line(0,1){4}}

\multiput(0,2)(3,0){2}{\line(1,0){1}}
\multiput(2,0)(0,3){2}{\line(0,1){1}}

\put(-.55,-.4){$(y=-1)$}
\put(3.45,-.4){$(x=-1)$}

\put(-.55,4.25){$(x=1)$}
\put(3.45,4.25){$(y=1)$}

\end{picture}
\caption{$(-2)$-curves on the Barth-Peters family}
\label{Fig:BP}
\end{figure}
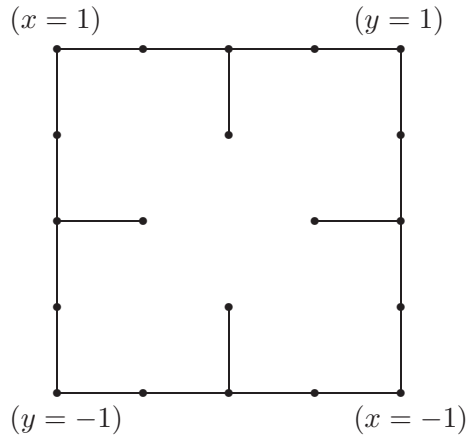

We shall exhibit several jacobian elliptic fibrations on the K3 surface $X$.
For a start, we identify two disjoint divisors of Kodaira type $III^*$ on $X$. 
In the above figure, they can be represented by the $(-2)$-curves above and below the middle horizontal line.
By \ref{ss:ell}, these divisors induce an elliptic fibration on $X$.
The rational curves connecting these two divisors serve as sections, and the remaining two rational curves $C_1, C_2$ represent disjoint fibre components.

Select one of the specified divisors as zero section $O$.
Then the second divisor $P$ gives a two-torsion section in $\MW(X)$.
This can be verified by a case-by-case analysis that compares the possible types of the singular fibres containing the rational curves $C_1, C_2$
 with the resulting height of $P$.

Note that fibres of type $III^*$ do not admit any further torsion sections.
Hence
a general member of the two-dimensional Barth-Peters family satifies $\rho= 18$
with discriminant $-4$:
\begin{eqnarray}
\label{eq:triv}
\Triv(X) = U + 2E_7(-1)+2A_1(-1), \;\;\; \MW(X) = \Z/2\Z.
\end{eqnarray}
Using the discriminant form of $\NS(X)$,
one also verifies that generally $T(X)=U+U(2)$ by the techniques of \cite{N} (cf.~\cite[Prop.~(4.15)]{BP}, mistakenly numbered as (4.5) in \cite{BP}).

In detail, the given elliptic fibration can be obtained from the  rational function
\begin{eqnarray}
\label{eq:t}
\begin{array}{lccc}
t: & X & \dashrightarrow & \PP^1\\
& (x,y,w) & \mapsto & t=\frac{(x+1)(y+1)}{(x-1)(y-1)}.
\end{array}
\end{eqnarray}
Here the involution $(-1,-1)$ on $\PP^1\times\PP^1$ induces $t\mapsto 1/t$.
The latter map can be seen to define an involution $\imath$ on $X$ the quotient of which is a rational elliptic surface $S$ (cf.~\eqref{eq:321} for an explicit equation).
In other words, $X$ is a quadratic base change of $S$:
\[
f: X \stackrel{2:1}{\longrightarrow} S
\]
and we are in the situation of Section \ref{s:ell}.
Here $S$ is uniquely determined by the configuration of singular fibres $[III^*, I_2, I_1]$
with $\MW(S)=\Z/2\Z$, 
since quadratic base changes from the isotrivial rational elliptic surface with configuration $[III^*, III]$
only give a one-dimensional subfamily of the Barth-Peters family.
$S$ has the following model  with affine parameter $s$, reducible fibres of type $III^*$ at $\infty$ and $I_2$ at $s=0$ and two-torsion section $P=(0,0)$:
\begin{eqnarray}\label{eq:321}
S:\;\;\; y^2 = x^3 + x^2 + sx.
\end{eqnarray}
In general, the quadratic base from $S$ 
cannot ramify at a non-smooth fibre since it has to support the two moduli dimensions (or because of the Picard number of $X$).
Hence~the deck transformation $\imath$ has smooth fixed fibres.
Thus $\imath$ composed with translation by the two-torsion section $P$ defines a fixed-point free involution $\tau$ as in \ref{ss:2-t}.
We claim that this involution gives the two-dimensional family of Enriques surfaces $\mathfrak Y=\mathfrak X/\tau$ of Barth-Peters:

\begin{Proposition}
The involution $(-1)$ of the above elliptic fibration induces a cohomologically trivial involution $\sigma$ on the Enriques surface $Y$.
\end{Proposition}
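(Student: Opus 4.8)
The strategy is to mimic the proof of Lemma \ref{Lem:cti}: exhibit a concrete set of curves on $Y$ that generate $H^2(Y,\Z)$ and on which $(-1)$ visibly acts as the identity. Here the relevant elliptic fibration on $Y$ is the one induced from the fibration \eqref{eq:321} on $S$ (equivalently, the $[III^*,I_2,I_1]$-type fibration on $X$ of \eqref{eq:triv}). First I would observe that the involution $\sigma$ induced by $(-1)$ on $X$ fixes the base $\PP^1$ pointwise, hence sends every fibre to itself. On a smooth fibre it acts as inversion, but on the reducible fibre of type $III^*$ it must fix every component: fibres of type $III^*$ admit no nontrivial automorphism compatible with the zero section (cf.\ the identical remark in the proof of Lemma \ref{Lem:cti}), and after descent to $Y$ the $III^*$ fibre survives unchanged. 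Likewise $\sigma$ fixes the bisection $R$ on $Y$ which splits on $X$ into $O$ and the two-torsion section $\boxminus P = P$ (both of which $(-1)$ fixes, $O$ pointwise and $P$ because it is $2$-torsion), and it fixes the two half-fibres $G_1, G_2$ over the ramification points $t_0,t_\infty$, since $(-1)$ acts trivially on the base.

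Next I would check that these curves generate $H^2(Y,\Z)$. Let $L$ be the sublattice of $H^2(Y,\Z)$ spanned by the bisection $R$, a general fibre $F$, and the eight non-identity components of the $III^*$ fibre. Since $R$ meets the unique simple component of the $III^*$ fibre (necessarily twice, as $R$ is a bisection), the same computation as in Lemma \ref{Lem:cti} gives
\[
L \cong E_7(-1) + \langle F,R\rangle,
\]
a lattice of rank $10$ whose discriminant one reads off from the $2\times 2$ Gram matrix of $\langle F,R\rangle$ (with $F^2=0$, $F\cdot R=2$). One computes $|\disc(L)|=4$, matching $|\disc(E_7)|=2$ times $4$, so in fact $\disc L = -4$ up to sign (I would record the exact Gram entry $R^2$ from the genus-one/adjunction bookkeeping on the Enriques surface, but the point is only that $L$ has index $2$ in $H^2(Y,\Z)$). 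As in Lemma \ref{Lem:cti}, neither $G_1$ nor $G_2$ lies in $L$, but $2G_1$ does, so $L'=\langle L, G_1\rangle$ is an overlattice of index two, hence unimodular; since $K_Y = G_1 - G_2$ in $H^2(Y,\Z)$, the curves $R,F,G_1,G_2$ together with the $III^*$-components span all of $H^2(Y,\Z)$. As $\sigma$ fixes each of these generators, $\sigma$ is cohomologically trivial.

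The step I expect to require the most care is the bookkeeping that pins down $L$ and shows it has index exactly two in $H^2(Y,\Z)$ — in particular verifying that $R$ really meets the simple component of the $III^*$ fibre and computing $R^2$ and the discriminant of $\langle F,R\rangle$ correctly on the Enriques surface (where there is no section and the multiple fibres intervene). Once $|\disc(L)| = 4$ is established, the overlattice argument via $G_1$ and the relation $K_Y=G_1-G_2$ is exactly parallel to Lemma \ref{Lem:cti} and presents no new difficulty; the verification that $\sigma$ fixes the $III^*$ components and the bisection is immediate from the structure of type $III^*$ fibres and the fact that $(-1)$ fixes $O$ pointwise and the $2$-torsion section $P$.
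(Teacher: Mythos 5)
Your overall strategy (fix explicit curves generating $H^2(Y,\Z)$ and check that $(-1)$ fixes them) is the right one and is how the paper argues, but the lattice bookkeeping has a genuine gap that a direct imitation of Lemma \ref{Lem:cti} cannot close. First, a fibre of type $III^*$ has eight components and hence only seven non-identity ones, so your lattice $E_7(-1)+\langle F,R\rangle$ has rank $9$, not $10$, and can never be of finite index in $H^2(Y,\Z)_f\cong U+E_8(-1)$. Moreover the fibration on $Y$ induced by \eqref{eq:t} also has a reducible fibre of type $I_2$ (cf.\ \eqref{eq:triv}: on $X$ there are two fibres each of types $III^*$ and $I_2$, interchanged by the deck transformation), and its non-identity component must be included. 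Doing so gives $L\cong E_7(-1)+A_1(-1)+\langle F,R\rangle$ of rank $10$ and discriminant $-16$, i.e.\ of index \emph{four} in $H^2(Y,\Z)_f$ --- not index two as in Lemma \ref{Lem:cti}.

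This changes the endgame. The reduced multiple fibres $G_1,G_2$ of the given fibration satisfy $2G_i=F$ and $G_1-G_2=K_Y$, so adjoining them to $L$ only produces an overlattice of index two together with the torsion class; it cannot bridge an index-four gap. This is exactly why the paper's proof needs an extra ingredient (``in the spirit of \cite[Prop.~(4.8)]{MN}''): one adjoins a reduced multiple fibre of a \emph{second} elliptic fibration on $Y$, namely the one induced by the parameter $t'$ obtained from \eqref{eq:t} by a sign change (the ``vertical'' fibration in Figure \ref{Fig:BP}). One half-fibre from each of the two fibrations yields a unimodular overlattice of $L$, and the remaining half-fibre then supplies $K_Y$, giving all of $H^2(Y,\Z)$; since $(-1)$ fixes the fibres of both fibrations, $\sigma$ still acts trivially on every generator. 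Your identification of the $\sigma$-fixed curves (the $III^*$ components, the bisection $R$, the half-fibres) is fine --- though you should also record that $(-1)$ commutes with $\tau=\imath\circ(\boxplus P)$ because $P$ is two-torsion, so that $\sigma$ is well defined on $Y$ --- but without the $I_2$ component and the second fibration the generation statement fails.
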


\begin{proof}
The proof resembles that of Lemma \ref{Lem:cti} with a little addition in the spirit of \cite[Prop.~(4.8)]{MN}.
First we note that $(-1)$ commutes with $\tau$ since $P$ is two-torsion.
It follows from the types of the reducible fibres ($I_2, III, III^*$) that $\sigma$ fixes all fibres of $Y$ componentwise.
The same holds for the bisection $R$ that splits on $X$ into $O$ and $P$.
Consider the lattice $L$ generated by $R$ and the components of the reducible fibres.
One computes that $L$ has discriminant $-16$,
so $L$ has index four in $H^2(Y,\Z)_f$.

To $L$, we shall add the 
reduced multiple fibres of two elliptic fibrations on $Y$.
These elliptic fibrations are induced from $X$ as follows:
on the one hand by $t$ as in \eqref{eq:t},
on the other hand after exchanging two signs by 
\[
\begin{array}{lccc}
t': & X & \dashrightarrow & \PP^1\\
& (x,y,w) & \mapsto & t=\frac{(x+1)(y-1)}{(x-1)(y+1)}.
\end{array}
\]
In Figure \ref{Fig:BP}, one can visualise these fibrations as horizontal and vertical.
By adding to $L$ one reduced multiple fibre of each induced fibration on $Y$, 
we obtain a unimodular overlattice of $L$.
Then adding the other reduced multiple fibre gives all of $H^2(Y,\Z)$.
Since in each case $(-1)$ fixes fibres,
we find that
$\sigma$ acts trivially on generators of $H^2(Y,\Z)$.
\end{proof}

\begin{Remark}
The involution $\sigma\in\Inv(Y)$ is induced from $\MW(S)=\Z/2\Z$ as in Lemma \ref{Lem:4.1}.
Namely $(-1)\circ\tau=\jj\circ(\boxplus P)$, so both involutions on $X$ descend to $\sigma\in\Inv(Y)$.
In Figure \ref{Fig:BP}, this can be seen as follows:
$(-1)$ acts as identity, $\tau$ as reflection at the middle point.
Translation by $P$ and $\jj$ act as reflection along the vertical resp.~horizontal axis,
so the composition gives exactly 
$\tau$.
\end{Remark}

\subsection{Specific subfamilies}
We now analyse 
how the families  from Lemma \ref{Lem:cti} fit into the Barth-Peters family.
Abstractly, we can argue with the transcendental lattices.
The families from Lemma \ref{Lem:cti} have generally
\[
T(X) = U + \langle 4M\rangle \;\;\; (M>0).
\]
Each of these transcendental lattices admits a primitive embedding into the transcendental lattice $U+U(2)$ of the general member of the Barth-Peters family.
Lattice theoretically, these primitive embeddings identify the families from Lemma \ref{Lem:cti} as subfamilies of the Barth-Peters family $\mathfrak X$.
In detail, we shall now analyse the families for $M=1$ and $M=2$
and locate them inside the Barth-Peters family  $\mathfrak X$.

\subsection{Family for $M=1$}
\label{ss:M=1}

Consider the one-dimensional family $\mathcal X$ of K3 surfaces with 
\[
U+2E_8(-1)+\langle-4\rangle \hookrightarrow \NS(\mathcal X).
\]
The jacobian elliptic fibrations on a general member of this family 
are classified in \cite{ES-19}:
there are exactly nine different possible configurations of singular fibres.



We single out one fibration whose reducible fibres have type $III^*$ twice and $I_4$ once.
This fibration (generally with $\MW(\mathcal X)=\Z/2\Z$) arises from the above model of elliptic fibrations in the Barth-Peters family  $\mathfrak X$
by merging the two singular fibres of type $I_2$.

Explicitly, we express the family  $\mathfrak X$ of K3 covers from \eqref{eq:BP} in terms of  the family of polynomials
\[
g(x,y) = x^2y^2 + (\lambda-1) x^2 + (\mu-1) y^2 + 1-\lambda-\mu.
\]
For the elliptic fibration with two $III^*$ fibres, we express $y$ in terms of the elliptic parameter $t$ from \eqref{eq:t}.
Elementary transformations lead to the Weierstrass form
\begin{eqnarray}
\label{eq:BPF}
\mathfrak X:\;\;\; w^2 = x(x^2-8(\lambda+\mu-2)t^2x+16(\lambda t+\mu)(\mu t+\lambda)t^3).
\end{eqnarray}
This has two-torsion section $Q=(0,0)$ and reducible singular fibres of type $III^*$ at $t=0, \infty$ and generally $I_2$ at $-\lambda/\mu, -\mu/\lambda$.
Up to a scalar factor, the quadratic base change $f: \PP^1\to\PP^1$ is given by
\[
f: t \mapsto s = \frac{(\lambda t+\mu)(\mu t+\lambda)}{t}.
\]
Obviously $\lambda, \mu$ are independent parameters for these base changes.
This shows that we are indeed dealing with a two-dimensional family  $\mathfrak X$ (see also \ref{sss:BP}).
The merging of the $I_2$ fibres occurs exactly when $\lambda=\pm\mu$,
and both sign choices yield isomorphic families $\mathcal X$.

This family will feature prominently in the arithmetic context of \cite{HS}.
In the present paper, it will return in relation with finite automorphism groups of Enriques surfaces 
in \ref{ss:finite} and in the study of Brauer groups in \ref{ss:-24}.



\subsection{Family for $M=2$}
\label{ss:M=2}

In this section, we are concerned with  the one-dimensional family of K3 surfaces $\mathcal X$ with a primitive embedding
\begin{eqnarray}
\label{eq:NS-8}
U+2E_8(-1)+\langle-8\rangle \hookrightarrow \NS(\mathcal X).
\end{eqnarray}
Our first aim is to exhibit this family explicitly.
For this purpose, we work with the two-dimensional Barth-Peters family $\mathfrak X$ with K3 fibrations with two fibres of type $III^*$ and $I_2$ each and $\MW=\Z/2\Z=\{O,P\}$ as studied in \ref{ss:BP}.
There is only one way to specialise in the family $\mathfrak X$ to surfaces with $\rho\geq 19$ such that generally the above N\'eron-Severi lattice results:
with a section $Q$ disjoint from $O$ that meets exactly one fibre of type $III^*$ and $I_2$ each at non-identity components.
Then the claimed shape of $\NS$ can be checked with the discriminant forms by \cite{N}.

In order to find the section $Q$,
we start with the elliptic fibration from \ref{ss:BP} with two $III^*$ fibres.
Recall that  $\mathfrak X$ sits above the rational elliptic surface $S$ from \eqref{eq:321}.
We take the (family of) quadratic base change $f: \PP^1\to\PP^1$ of the simplest  form possible
\begin{eqnarray}
\label{eq:fab}
f: t\mapsto s=(t-a)(t-b)/t\;\;\; (ab\neq 0).
\end{eqnarray}
Then we can normalise the family $\mathfrak X$ to the integral Weierstrass form
\begin{eqnarray}
\label{eq:8}
\mathfrak X:\;\;\; w^2 = x^3 + t^2 x^2 + t^3(t-a)(t-b) x.
\end{eqnarray}
For the subfamily $\mathcal X$,
write the new section in coordinates $Q=(V,W)$.
We require that $Q\cdot O=0$, so $V, W$ are polynomials in $t$ of degree at most $4$ resp.~$6$.
Now assume that the section  meets the singular fibres at $t=a, \infty$ in non-identity components.
This directly gives the conditions
\[
(t-a)|V,W\;\;\text{ and }\;\; \deg(V)\leq 1,\;\; \deg(W)\leq 3.
\]
Hence $V=\alpha(t-a)$, and we can insert  into \eqref{eq:8} to find a solution directly:
\[
a=-(q^2-1)^2/4, \;\;\; b=2 q^2 (q^2-1),\;\;\; V= q^2 (q^2-1)^2 (4 t+(q^2-1)^2)/4,
\]\[
W=- q(q^2-1) (4 t+(q^2-1)^2) (2 t^2-2 q^2 (q^2-1) t-q^2(q^2-1)^3) /8
\]
Note that the first three expressions are invariant under $q\mapsto -q$, 
so in particular the family can be defined with parameter $\sqrt{q}$,
but the remaining coordinate $W$ of $P$ is negated by conjugation (i.e.~$W/q\in\Q[q^2]$).

Just like the family from \ref{ss:M=1}, this subfamily $\mathcal X$ of the Barth-Peters family $\mathfrak X$ has interesting arithmetic applications that will be exploited in \cite{HS}.
It will also be considered in the context of finite autormorphism groups in the next section.

\subsubsection{Relation between models of the Barth-Peters family}
\label{sss:BP}

We have exhibited two explicit models of the Barth-Peters family  $\mathfrak X$:
the first through the polynomial $g$ in parameters $\lambda, \mu$ in \ref{ss:M=1},
the second in this section through the simple base change \eqref{eq:fab},
Here we describe how these models relate by transforming the second Weierstrass form \eqref{eq:8} to the first  \eqref{eq:BPF}.

For now, we fix some  $\gamma\in\Q(\lambda, \mu)$.
Then the change of variables
\[
(x,w,t) \mapsto \left(\frac{ab}{\gamma} x, \left(\frac{ab}{\gamma}\right)^{3/2} w, (ab)^{1/2} t\right)
\]
transforms the elliptic K3 surfaces \eqref{eq:8} to
\[
w^2 = x^3 + \gamma t^2x^2 + \gamma^2 t^3 (\sqrt bt-\sqrt a)(\sqrt a t - \sqrt b) x.
\]
This Weierstrass form attains the shape of \eqref{eq:BPF} by setting
\[
\gamma=-8(\lambda+\mu-2),\;\;\; \sqrt a = i\lambda/(2(\lambda+\mu-2)),\;\;\; \sqrt b = -i\mu/(2(\lambda+\mu-2)).
\]
In oder to exhibit the subfamily from \ref{ss:M=2}, 
the square roots of $a$ and $b$ can be extracted over $\C(r)$
for $q=(r^2+1/r^2)/2$.
In the next section, we will look into other elliptic fibrations on the given families.
For convenience, we will continue to write them in terms of the parameters $a,b$ resp.~$q$.

\subsection{Enriques surfaces with finite automorphism group}
\label{ss:finite}

By work of Nikulin \cite{N1}, \cite{N2}, a general K3 surfaces with a fixed polarisation has either trivial automorphism group or, if its degree is two, of order two.
In contrast, a general Enriques surface has infinite automorphism group.
In \cite{K-E} Kond\=o classified the Enriques surfaces $Y$ with finite automorphism groups.
Necessarily $Y$ contains a $(-2)$-curve.
By \cite{Cossec}, there is a special elliptic fibration.
On the covering K3 surface $X$, this fibration admits two (disjoint) sections,
and the Enriques involution is of base change type as in \ref{ss:inv} (cf.~\ref{ss:special}).
Associated to $X$ we find a rational elliptic surface $S$.
By Lemma \ref{Lem:4.1}, there is an injection $\MW(S)\hookrightarrow \Aut(Y)$.
Hence for the latter group to be finite, $\MW(S)$ has to be finite, too, i.e.~$S$ has to be extremal.
Moreover, the same argument applies to all special elliptic fibrations of $Y$ simultaneously
(i.e. to all jacobian elliptic fibrations on $X$ that descend to $Y$).
Based on work by Vinberg, Kond\=o classified all possible special elliptic fibrations on $Y$.
These always determine the covering K3 surface:

\begin{Theorem}[Kond\=o]
Let $Y$ be an Enriques surface with finite automorphism group.
Denote by $X$ the universal K3 cover.
Then $X$ belongs to one of the following cases:
the families from \ref{ss:M=1} and \ref{ss:M=2} and five isolated K3 surfaces.
\end{Theorem}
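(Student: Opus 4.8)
The plan is to work through Kond\=o's list of Enriques surfaces with finite automorphism group (the seven types $\mathrm{I}$ through $\mathrm{VII}$ in \cite{K-E}) and, for each one, identify the universal K3 cover $X$ by recognising which lattice-polarised family it falls into. By \cite{Cossec} every such $Y$ carries a special elliptic fibration; on $X$ this fibration splits the bisection into two sections, and by Proposition \ref{Prop:split} the pair $(X,Y)$ arises from the base change construction of \ref{ss:inv} with an associated rational elliptic surface $S$. As observed just before the statement, Lemma \ref{Lem:4.1} forces $\MW(S)$ to be finite, i.e.\ $S$ must be one of the extremal rational elliptic surfaces (classified by Miranda--Persson), and the same applies simultaneously to \emph{all} special fibrations on $Y$. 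Kond\=o's tables already record, for each of the seven types, the complete list of elliptic fibrations together with their reducible fibres and torsion; so the first step is simply to read off these data.

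The second step is lattice-theoretic: for each type, take the special fibration, compute $\NS(X)$ from the trivial lattice plus the splitting section (as in \ref{ss:latt} and \ref{sss:even}), and match it against the lattices $U+2E_8(-1)+\langle -4\rangle$ and $U+2E_8(-1)+\langle -8\rangle$ of the families from \ref{ss:M=1} and \ref{ss:M=2}, or else conclude that $\rho(X)=20$ so that $X$ is one of finitely many singular K3 surfaces pinned down by its transcendental lattice. Concretely: the types whose covering K3 has Picard number $19$ should be exactly those whose associated $S$ has a $III^*$ (or $II^*$) fibre — forcing $\MW(S)$ torsion and the base change to contribute a $D_4(-1)$ pair as in \ref{ss:inv} — and among these the discriminant of the transcendental lattice distinguishes $M=1$ from $M=2$. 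The remaining types have $\rho(X)=20$; for each one the transcendental lattice is a rank-$2$ positive definite even lattice read off from $\disc(\NS(X))$, and since a singular K3 surface is determined up to isomorphism by its transcendental lattice (Shioda--Inose), this yields the five isolated surfaces. One should also check the easy direction that there is no overlap, i.e.\ the generic member of the $M=1$ and $M=2$ families genuinely gives an Enriques surface with finite automorphism group — this follows because specialising the special fibration to the configuration recorded by Kond\=o does not create extra $(-2)$-curves in the anti-invariant lattice.

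The main obstacle I expect is the bookkeeping in step two: Kond\=o's seven types have in total a fair number of distinct special elliptic fibrations, and for several of them one must check consistency across \emph{all} of them (e.g.\ that the rational elliptic surfaces $S$ attached to different special fibrations on the same $Y$ are mutually compatible and all extremal), rather than just one. A secondary subtlety is the correct treatment of the non-smooth ramified fibres: when the fixed fibre of $\imath$ on $S$ is of type $I_n$ rather than smooth, the Nikulin quotient $X'$ no longer has an $I_0^*$ there, and the formula $\NS(X)=U+E_8(-2)+\langle-4M\rangle$ from \eqref{eq:Ohashi} needs adjusting — one has to recompute $\Triv(X)$ and the height of the splitting section case by case. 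Once these local computations are organised into a table parallel to Kond\=o's, the identification of $X$ is immediate, and the theorem follows.
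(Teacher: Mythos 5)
The paper does not prove this statement at all: it is quoted as Kond\=o's classification and cited to \cite{K-E}; the only original content surrounding it is the observation (via Proposition \ref{Prop:split} and Lemma \ref{Lem:4.1}) that every special elliptic fibration on $Y$ must come from an extremal rational elliptic surface. Your plan reproduces that framing correctly, but as a proof of the theorem it is circular where it matters. The statement is an exhaustiveness claim (``$X$ belongs to one of the following cases''), and your very first step is to work through ``Kond\=o's list of the seven types'' --- i.e.\ you assume the classification whose consequence you are asked to establish. The genuinely hard part of \cite{K-E} is showing that the configuration of $(-2)$-curves on $Y$ is one of finitely many; this rests on Vinberg's criterion for the reflection subgroup generated by $(-2)$-classes to have finite index in the orthogonal group of $\Num(Y)$, together with a case-by-case elimination of root-lattice configurations, none of which appears in your plan. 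Extremality of the associated rational elliptic surfaces (your first step) is only a necessary condition and nowhere near sufficient to cut the possibilities down to seven.

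A secondary, concrete error: your proposed criterion ``$\rho(X)=19$ exactly for the types whose associated $S$ has a $III^*$ (or $II^*$) fibre'' is false. The $M=2$ family from \ref{ss:M=2} has $\rho=19$, yet by \cite{K-E} (as recalled in \ref{ss:finite}) its Enriques quotients admit only special fibrations with reducible fibres $I_4^*$, or $I_9$, or $I_1^*$ together with $I_4$; the corresponding extremal rational elliptic surfaces (with configurations $[I_4^*,I_1,I_1]$, $[I_9,I_1,I_1,I_1]$, $[I_1^*,I_4,I_1]$) contain no $III^*$ or $II^*$ fibre. So even the bookkeeping step, granted Kond\=o's tables, would need a different organising principle --- in practice one computes $\rho(X)$ directly from the trivial lattice plus the splitting section, type by type, rather than from the fibre types of $S$.
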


Moreover, the Enriques involution on $X$ is always determined 
by one of the special elliptic fibrations of $Y$.
For instance for the family from \ref{ss:M=1},
Kond\=o proves that $Y$ admits a special elliptic fibration with a fibre of type $III^*$ and an $I_2$ fibre of multiplicity two.
On $X$, this induces exactly the fibration for $\lambda=\pm \mu$ from \ref{ss:M=1}.
In particular, this fibration has $\MW=\Z/2\Z$,
so deck transformation and two-torsion section combine to the only possible Enriques involution of base change type as in \ref{ss:inv}.

%

We now consider those Enriques surfaces with finite automorphism group 
that are covered by the family from \ref{ss:M=2}.
By \cite{K-E} these Enriques surfaces do only admit special elliptic fibrations with the following (combinations of) reducible fibres:
\[
I_4^* \;\;\; \text{ or } \;\;\; I_9 \;\;\; \text{ or } \;\;\; I_1^* \text{ and } I_4 \text{ with multiplicity two}.
\]
The Enriques involution on the family $\mathcal X$ is determined by the last mentioned special elliptic fibration on the Enriques quotient:
the K3 surface inherits an elliptic fibration with two singular fibres of type $I_1^*$ and one fibre of type $I_8$.
For later reference, we denote this elliptic fibration by the pair $(\mathcal X, \pi_1)$
where the morphism $\pi_1: \mathcal X\to \PP^1$ exhibits the specified elliptic structure.
In general, the Mordell-Weil group equals $\Z/4\Z$ by the Shioda-Tate formula.
Sections and reducible fibres are sketched in Figure \ref{Fig:connection}.
Elliptic fibrations with an $N$-torsion point ($N>3$) necessarily arise from the universal elliptic curve for $\Gamma_1(N)$.
For $N=4$, this gives a rational elliptic surface with the same singular fibres as the fixed fibration on the Enriques quotient, given for instance by
\[
y^2 + xy + sy = x^3 + sx^2.
\]
Quadratic base changes ramified at the $I_4$ fibre at $s=0$
give the elliptic fibrations $(\mathcal X, \pi_1)$ 
on the one-dimensional family $\mathcal X$ of K3 surfaces from \ref{ss:M=2}.
In general, $\MW(\mathcal X, \pi_1)=\Z/4\Z$ is invariant under the deck transformation $\imath$, but only the pull-back of the two-torsion section $(-s,0)$ is anti-invariant under $\imath$. 
Hence the Enriques involution $\tau_1$ whose quotient is $\mathcal Y$ is given by the composition of $\imath$ 
with translation by the two-torsion section.
We conclude by relating the above elliptic fibration $(\mathcal X,\pi_1)$ on the one-dimensional family $\mathcal X$ and the model from \ref{ss:M=2}.
%

We start with the full Barth-Peters family  from \ref{ss:BP}.
From the model $\mathfrak X$ in \eqref{eq:8} with two singular fibres of type $III^*$,
it is easy to extract two disjoint divisors of Kodaira type $I_4^*$.
The parameter $u=x/(t(t-a))$ extracts one of them from the $III^*$ fibre at $\infty$ extended by zero section and identity components of the fibres at $t=0, a$.
In the diagram of $(-2)$-curves in Figure \ref{Fig:BP}
the two resulting singular fibres are visible above and below either diagonal,
and the two rational curves on the diagonal form zero and two-torsion section.
One easily finds the following Weierstrass form (a plane cubic in $t,w$):
\begin{eqnarray}
\label{eq:2x4*}
w^2 = t(t^2 + u(u^2+u-b) t -au^4).
\end{eqnarray}
Here the parameter $v=t/u^2$ extracts another elliptic fibration with two singular fibres of type $I_4^*$;
it exchanges the role of the torsion sections and the central double components of the singular fibres.
On the Barth-Peters family, this corresponds to exchanging the moduli $a$ and $b$.
Both families arise through a quadratic base change from the unique rational elliptic surface with $I_4^*$ as sole reducible fibre.
For the above model, this fact can be read off from the involution
\begin{eqnarray}
\label{eq:inv-2x4*}
(t,w,u) \mapsto (b^2 t/u^4, b^3 w/u^6, -b/u).
\end{eqnarray}
With base change and two-torsion section,
each fibration admits an Enriques involution as in \ref{ss:2-t}.
For dimension reasons, this involution does generally not result in an Enriques surface with finite automorphism group
due to the classification in \cite{K-E}.

We now specialise to the family $\mathcal X$ from \ref{ss:M=2} inside the Barth-Peters family $\mathfrak X$ with parameter $q$.
Since the parameters $a$ and $b$ are interchanged, 
the two above fibrations behave differently under  specialisation.
The difference is detected by the additional section of height $2$.
The above fibration \eqref{eq:2x4*}
inherits a section from the elliptic fibration \eqref{eq:8} specialised to the subfamily $\mathcal X$.
Here the section $Q=(V,W)$ of $\mathcal X$ transforms with
$t$-coordinate
\[
T'=q^2(q^2-1)^2.
\]
This section meets the two fibres of type $I_4^*$ as follows: identity component at $u=0$, far component at $u=\infty$. 
Hence the height is indeed two.
Due to the asymmetry in the intersection with the singular fibres, the section cannot come directly from the Nikulin quotient $\mathcal X'$ corresponding to the base change (but this holds for twice the section by \ref{ss:quad}).
In contrast, consider
the elliptic fibration on $\mathcal X$ with $a, b$ interchanged (and specialised to $a(q), b(q)$)
\begin{eqnarray}
\label{eq:2nd4*}
\;\;\;\;(\mathcal X,\pi_2):\;\;\; w^2 = u(u^2 + v(v^2+v+(q^2-1)^2/4) u -2 q^2 (q^2-1)v^4).
\end{eqnarray}
This elliptic surface admits a section $R=(U'',W'')$
that meets both $I_4^*$ fibres symmetrically at the near simple components:
\[
U'' = -(2v+1-q^2)^2v/4.
\]
Indeed the involution corresponding to \eqref{eq:inv-2x4*} inverts $R$ with respect to the group law.
Hence $R$ is induced from the Nikulin quotient $(\mathcal X,\pi_2)'$ associated to the quadratic base change,
and we can construct an Enriques involution $\tau_2$ on $(\mathcal X,\pi_2)$ from the quadratic base change and translation by $-R$ as in \ref{ss:inv}.
We claim that this involution gives Enriques surfaces with finite automorphism group:

\begin{Lemma}
Consider the elliptic fibrations $(\mathcal X,\pi_1), (\mathcal X,\pi_2)$ with Enriques involutions $\tau_1, \tau_2$ of base change type as in \ref{ss:inv}.
On the underlying family of  complex surfaces $\mathcal X$, we  have $\tau_1=\tau_2$.
\end{Lemma}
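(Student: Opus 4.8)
The plan is to compare $\tau_1$ and $\tau_2$ cohomologically by means of the Torelli theorem. Since an automorphism of a K3 surface is determined by its action on $H^2(\mathcal X,\Z)$, it suffices to prove $\tau_1^*=\tau_2^*$ on $H^2(\mathcal X,\Z)$, or equivalently that $g:=\tau_1\circ\tau_2$ acts trivially on $H^2(\mathcal X,\Z)$ and hence equals $\mathrm{id}$. Note that $g$ is symplectic, since each $\tau_i$ acts by $-1$ on the holomorphic $2$-form.

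First I would dispose of the transcendental part. For an Enriques involution the invariant part of $H^2$ is the pull-back of $\Num$ of the quotient, namely the sublattice $U(2)+E_8(-2)\subset\NS(\mathcal X)$ of \eqref{eq:U(2)}; hence $\tau_i^*$ acts as $-1$ on its orthogonal complement, which contains $T(\mathcal X)=U+\langle 8\rangle$. Thus $\tau_1^*=\tau_2^*$ on $T(\mathcal X)$. Since $\NS(\mathcal X)\oplus T(\mathcal X)$ has finite index in $H^2(\mathcal X,\Z)$, it remains to show $\tau_1^*=\tau_2^*$ on $\NS(\mathcal X)$; and as an involution of a lattice is determined by its $(+1)$-eigenspace over $\Q$, this comes down to showing that the invariant sublattices $\NS(\mathcal X)^{\tau_1}$ and $\NS(\mathcal X)^{\tau_2}$, each isometric to $U(2)+E_8(-2)$, span the same rank-$10$ subspace of $\NS(\mathcal X)_\Q$.

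To verify this I would exhibit explicit generators of each $\NS(\mathcal X)^{\tau_i}$ from the geometry of \ref{ss:inv}: the class of a general fibre of $\pi_i$, the $\tau_i$-invariant bisection $O+P_i$, and the components of the reducible fibres fixed by $\tau_i$ (for $\pi_1$ these come from the $I_8$ and the two $I_1^*$ fibres, for $\pi_2$ from the two $I_4^*$ fibres). Using the Weierstrass models \eqref{eq:8}, \eqref{eq:2x4*} and \eqref{eq:2nd4*}, the parametrisation by $q$ from \ref{ss:M=2} and the change of coordinates in \ref{sss:BP}, one expresses the curves generating $\NS(\mathcal X)^{\tau_2}$ in terms of the N\'eron--Severi classes attached to $\pi_1$ and checks that $\tau_1$ fixes each of them. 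Since both sublattices have rank $10$, this forces $\NS(\mathcal X)^{\tau_1}_\Q=\NS(\mathcal X)^{\tau_2}_\Q$, and the proof is complete.

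The main obstacle is exactly this last step: $\pi_1$ and $\pi_2$ are genuinely different fibrations, so one has to track how the second base-change involution acts on the divisor classes naturally attached to the first elliptic structure, i.e.\ to locate the fibral curves, sections and bisection of $\pi_2$ inside $\NS(\mathcal X)$ as read off from $\pi_1$. This is a finite but delicate lattice computation which the explicit models of \ref{ss:M=2} and \ref{sss:BP} are designed to make manageable. Since $g=\tau_1\circ\tau_2$ is symplectic, one could alternatively try to lighten the bookkeeping by invoking Nikulin's constraints on finite symplectic automorphisms, which would reduce the task to exhibiting a $g$-invariant sublattice of $\NS(\mathcal X)$ of rank at least $15$ rather than matching the full invariant lattices.
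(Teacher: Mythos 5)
Your proposal is correct and takes essentially the same route as the paper: both invoke the Torelli theorem, note that each $\tau_i^*$ acts as $-1$ on $T(\mathcal X)$, and reduce to comparing the two actions on $\NS(\mathcal X)$ by identifying the fibres, sections and bisection of $\pi_2$ inside the divisor classes attached to $\pi_1$. The paper carries out that identification combinatorially on the diagram of $(-2)$-curves (Figure \ref{Fig:connection}) rather than through Weierstrass coordinates, and checks directly that $\tau_1$ and $\tau_2$ act identically on a generating set of $\NS(\mathcal X)$ — which is equivalent to your comparison of the rank-$10$ invariant sublattices.
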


\begin{proof}
We will argue with the Torelli theorem \cite{PSS}.
Since both involutions act as $(-1)$ on $T(\mathcal X)$, it suffices to study the operation on $\NS(\mathcal X)$.
The fibrations $(\mathcal X,\pi_1), (\mathcal X,\pi_2)$ can be related purely in terms of divisors.
The following figure sketches the configuration of singular fibres and sections on $(\mathcal X,\pi_1)$ and how to extract the reducible fibres and $\MW$-generators of $(\mathcal X,\pi_2)$.

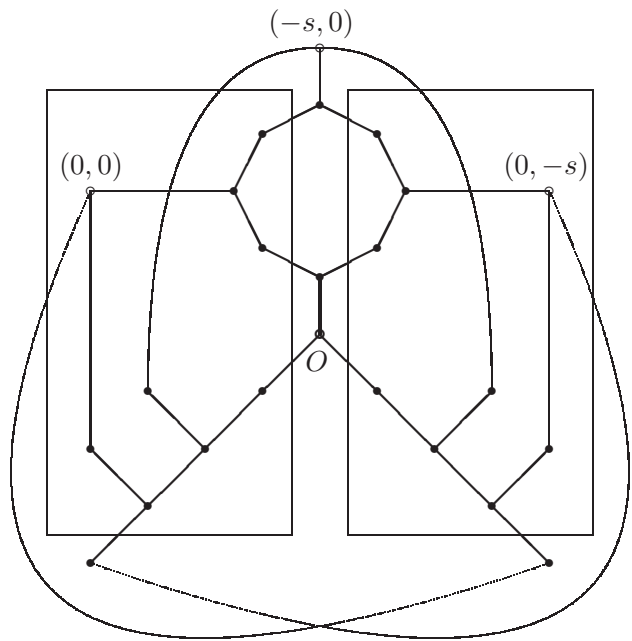
\begin{figure}[ht!]
\setlength{\unitlength}{.30in}
\begin{picture}(6.5,11)(3.8,-5)
\thicklines

%

\put(5.5,3){\circle*{.15}}
\put(6,4){\circle*{.15}}
\put(7,4.5){\circle*{.15}}
\put(8,4){\circle*{.15}}

\put(8.5,3){\circle*{.15}}

\put(5.5,3){\line(1,2){.5}}
\put(6,4){\line(2,1){1}}

\put(8.5,3){\line(-1,2){.5}}
\put(8,4){\line(-2,1){1}}

\put(6,2){\circle*{.15}}
\put(7,1.5){\circle*{.15}}
\put(8,2){\circle*{.15}}

\put(5.5,3){\line(1,-2){.5}}
\put(6,2){\line(2,-1){1}}

\put(8.5,3){\line(-1,-2){.5}}
\put(8,2){\line(-2,-1){1}}

\put(7,1.5){\line(0,-1){1}}
\put(7,.5){\circle{.15}}

\put(6.75,-.15){$O$}

\put(7,.5){\line(1,-1){4}}
\put(7,.5){\line(-1,-1){4}}

\multiput(5,-1.5)(-1,-1){2}{\line(-1,1){1}}
\multiput(4,-.5)(-1,-1){2}{\circle*{.15}}

\multiput(9,-1.5)(1,-1){2}{\line(1,1){1}}
\multiput(10,-.5)(1,-1){2}{\circle*{.15}}

\multiput(8,-.5)(1,-1){4}{\circle*{.15}}
\multiput(6,-.5)(-1,-1){4}{\circle*{.15}}

\thinlines

\put(7,4.5){\line(0,1){1}}
\put(7,5.5){\circle{.15}}

\put(6.1,5.8){$(-s,0)$}

\qbezier(7,5.5)(4,5.5)(4,-.5)
\qbezier(7,5.5)(10,5.5)(10,-.5)

\put(5.5,3){\line(-1,0){2.5}}
\put(3,3){\circle{.15}}

\put(2.45,3.3){$(0,0)$}

\put(3,3){\line(0,-1){4.5}}

\qbezier(3,3)(-2,-8)(11,-3.5)

\put(8.5,3){\line(1,0){2.5}}
\put(11,3){\circle{.15}}

\put(10.2,3.3){$(0,-s)$}

\put(11,3){\line(0,-1){4.5}}

\qbezier(11,3)(16,-8)(3,-3.5)


\put(2.25,-3){\framebox(4.25,7.75){}}
\put(7.5,-3){\framebox(4.25,7.75){}}

\end{picture}
\caption{Two divisors of Kodaira type $I_4^*$ supported on the elliptic fibration $(\mathcal X,\pi_1)$}
\label{Fig:connection}
\end{figure}

Here the old section $O$ gives also a section for the new fibration $(\mathcal X,\pi_2)$ with the two indicated fibres of type $I_4^*$.
Then the opposite fibre component of the original $I_8$ fibre of $(\mathcal X,\pi_1)$ defines a two-torsion section of $(\mathcal X,\pi_2)$,
and the old two-torsion section $(-s,0)$ induces the section $R$ of $(\mathcal X,\pi_2)$ as above.
The identity component of the $I_8$ fibre of $(\mathcal X,\pi_1)$ gives the section $P\boxplus R$.
One easily compares the action of $\tau_1=\imath_1\circ(\boxminus (-s,0))$ and $\tau_2 = \imath_2\circ(\boxminus R)$ on the $(-2)$-curves in Figure \ref{Fig:connection}:
they are identical.
In general, the depicted curves generate $\NS(\mathcal X)$.
Hence $\tau_1=\tau_2$ by Torelli.
\end{proof}

\begin{Remark}
The involutions $\imath\circ(\boxplus R)$ and $\imath\circ(\boxminus R)$ are conjugate by translation by $\pm R$.
Hence they induce isomorphic Enriques quotients, and there is no real ambiguity in the sign of $R$.
\end{Remark}

\section{Brauer groups of Enriques surfaces}
\label{s:Beau}

A complex Enriques surface $Y$ has Brauer group $\Br(Y)=\Z/2\Z$.
It is natural to ask 
what the pull-back of $\Br(Y)$ by the universal covering $\pi: X\to Y$ looks like.
In \cite{Beau},
Beauville showed that 
Enriques surfaces with trivial pull-back
\[
\pi^* \Br(Y) = \{0\}\subset\Br(X).
\]
form an infinite countable union of hypersurfaces in the moduli space of Enriques surfaces.
These hypersurfaces can be characterised in terms of the Enriques involution $\tau$ on $X$ by the following congruence condition:

\begin{Theorem}[Beauville]
\label{Thm:Beau}
In the above notation, 
the following statements are equivalent:
\begin{enumerate}[(i)]
\item
$\pi^* \Br(Y) = \{0\}\subset\Br(X)$;
\item
There is a divisor $D$ on $X$ such that $\tau^*D=-D$ in $\NS(X)$ and $D^2\equiv 2\mod 4$.
\end{enumerate}
\end{Theorem}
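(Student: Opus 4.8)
The plan is to translate statement (i) into lattice theory via the standard cohomological machinery for the \'etale double cover $\pi\colon X\to Y$, and then to isolate the single copy of $\Z/2\Z$ that separates the two cases. Recall $\Br(Y)\cong\Z/2\Z$; for the K3 surface $X$ one has $\Br(X)[2]\cong\mathrm{Hom}(T(X)/2T(X),\Z/2\Z)$, where $T(X)=\NS(X)^\bot\subset H^2(X,\Z)$. The invariant part $H^2(X,\Z)^{\tau^*}$ has rank $10$ and contains the algebraic lattice $\pi^*\Num(Y)\cong U(2)+E_8(-2)$ of the same rank, hence is algebraic and sits inside $\NS(X)$; consequently $T(X)$ lies in the $(-1)$-eigenspace of $\tau^*$, the involution $\tau^*$ acts as $-\mathrm{id}$ on $T(X)$, and hence as $-\mathrm{id}$ on $\Br(X)$. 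Therefore $\Br(X)^{\tau^*}=\Br(X)[2]$, and since $\pi\circ\tau=\pi$ the image of $\pi^*$ lies in $\Br(X)[2]$; as $\Br(Y)\cong\Z/2\Z$, statement (i) is the vanishing of a single class $\pi^*b$ in $\Br(X)[2]$.

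Next I would feed the \'etale $\Z/2\Z$-torsor $\pi$ into the Cartan--Leray spectral sequence $H^p(\Z/2\Z,H^q(X,\mathbb G_m))\Rightarrow H^{p+q}(Y,\mathbb G_m)$. Since $H^0(X,\mathbb G_m)=\C^*$ carries the trivial action one has $H^2(\Z/2\Z,\C^*)=0$ and $H^3(\Z/2\Z,\C^*)\cong\Z/2\Z$, and $H^1(X,\mathbb G_m)=\NS(X)$. The five-term exact sequence yields $\mathrm{Pic}(Y)\twoheadrightarrow\NS(X)^{\tau^*}$ with kernel $\Z/2\Z$, and in degree two it identifies
\[
\ker\bigl(\pi^*\colon\Br(Y)\to\Br(X)\bigr)\;=\;\ker\bigl(d\colon H^1(\Z/2\Z,\NS(X))\to H^3(\Z/2\Z,\C^*)\cong\Z/2\Z\bigr).
\]
Now $H^1(\Z/2\Z,\NS(X))=\{D\in\NS(X):\tau^*D=-D\}/(1-\tau^*)\NS(X)$, whose non-zero classes are represented precisely by anti-invariant divisors. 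Since $\Br(Y)\cong\Z/2\Z$ the left-hand kernel is $0$ or $\Z/2\Z$; hence (i) holds if and only if $d$ fails to be injective, i.e. if and only if some anti-invariant divisor has non-zero class in $H^1(\Z/2\Z,\NS(X))$ killed by $d$.

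It remains to compute the transgression $d$ and to match the result with the congruence $D^2\equiv 2\pmod 4$. I would do this by comparing the $\mathbb G_m$-spectral sequence with the Kummer ($\mu_2$) one: under the reduction $\NS(X)\to H^2(X,\Z/2\Z)$ and the identification $H^*(\Z/2\Z,\mu_2)=\Z/2\Z[h]$ the transgression becomes a cup-product / Steenrod operation, and — using that $H^2(X,\Z)$ is even and unimodular, equivalently $w_2(X)=0$ — one identifies $d$ on the class of an anti-invariant $D$ with the value of the well-defined $\Z/2\Z$-valued quadratic refinement $x\mapsto\tfrac12x^2\bmod 2$ of the mod-$2$ intersection form (note $D^2$ is even and depends only on $D$ modulo $2$). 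Keeping careful track of the normalisations then shows that a non-zero class survives into $\Br(Y)$ exactly when some anti-invariant $D$ has $\tfrac12D^2$ odd, that is $D^2\equiv 2\pmod 4$, which gives (i)$\Leftrightarrow$(ii). The main obstacle is precisely this last identification: explaining \emph{why} the surviving class corresponds to $D^2\equiv 2$ rather than $D^2\equiv 0\pmod 4$. This forces one to track the isomorphism $\Br(X)[2]\cong\mathrm{Hom}(T(X)/2T(X),\Z/2\Z)$, the reduction map $\NS(X)\to H^2(X,\Z/2\Z)$ and its interaction with $\tau^*$, and the transgression $d$ simultaneously, and it ultimately rests on the spin property of $X$. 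One must also check, using that $\Br(Y)\cong\Z/2\Z$ a priori bounds $\dim_{\F_2}H^1(\Z/2\Z,\NS(X))\le 2$, that the degenerate configurations (for instance an anti-invariant $D$ with $D^2\equiv 2$ but zero class in $H^1$) do not invalidate the equivalence.
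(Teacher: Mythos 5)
First, a point of comparison: the paper does not prove this statement at all --- it is quoted from Beauville \cite{Beau} as a known result, so there is no internal argument to measure your proposal against. Judged on its own terms, the first half of your reduction is sound: since $H^2(\Z/2\Z,\C^*)=0$, the Hochschild--Serre filtration gives $\ker\bigl(\pi^*\colon\Br(Y)\to\Br(X)\bigr)\cong E_\infty^{1,1}=\ker\bigl(d_2\colon H^1(\Z/2\Z,\NS(X))\to H^3(\Z/2\Z,\C^*)\cong\Z/2\Z\bigr)$, so (i) is indeed equivalent to $d_2$ having non-trivial kernel, and your verification that $\tfrac12D^2\bmod 2$ descends to $H^1(\Z/2\Z,\NS(X))$ is a reasonable preliminary.

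The gap is in the identification of $d_2$, which is the entire content of the theorem, and the formula you propose is the wrong one --- indeed it contradicts your own concluding sentence. If $d_2[D]=\tfrac12D^2\bmod 2$, then a non-zero class \emph{survives}, i.e.\ lies in $\ker d_2=\ker\pi^*$, precisely when $D^2\equiv 0\bmod 4$, which is the complementary condition to (ii); yet you assert survival happens when $\tfrac12D^2$ is odd. A concrete test: for the generic special Enriques surface of this paper one has $\NS(X)=U+E_8(-2)+\langle-4\rangle$, whose anti-invariant part is $\Z D_0$ with $D_0^2$ a negative perfect square (here $-4$), so (ii) fails and Beauville's theorem forces $d_2[D_0]\neq 0$ even though $\tfrac12D_0^2\equiv 0\bmod 2$. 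The correct transgression must therefore send a non-zero class $[D]$ to $\tfrac12D^2+1\bmod 2$, and one then has to explain both the shift by $1$ and why the result is additive: note that $x\mapsto\tfrac12x^2\bmod 2$ is only a quadratic form whose polarisation is the mod-$2$ intersection pairing, which does not vanish on the anti-invariant lattice $U+U(2)+E_8(-2)$, so it cannot literally equal a group homomorphism $d_2$. You flag exactly this ``$\equiv 2$ versus $\equiv 0$'' issue as the main obstacle, but then resolve it by assertion rather than computation, so the decisive step is missing. For what it is worth, Beauville's own argument avoids computing this transgression: he works with the Kummer sequence, identifies $\Br(X)[2]\cong\mathrm{Hom}(T(X),\Z/2\Z)$, and computes $\pi^*b$ explicitly as a linear form on $T(X)$ using the $\Z[\Z/2\Z]$-module structure of $H^2(X,\Z)$ (invariant part $U(2)+E_8(-2)$, anti-invariant part $U+U(2)+E_8(-2)$); the vector $e+f$ of square $2$ in the $U$-summand of the anti-invariant part is where the congruence $D^2\equiv 2\bmod 4$ ultimately comes from.
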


The hypersurfaces in the moduli space of Enriques surfaces 
which constitute the surfaces from Theorem \ref{Thm:Beau}
correspond to those K3 surfaces $X$ which admit a primitive embedding
\[
U(2)+E_8(-2) + \langle-2N\rangle \hookrightarrow \NS(X)
\]
for some odd $N>1$ (containing an ample class, but without $(-2)$-vectors in the orthogonal complement in $\NS(X)$).
However, as Beauville remarked, there was no explicit Enriques surface known with the above property.
In particular, it was unclear whether such a surface (K3 $X$ or Enriques $Y$) exists over $\Q$.
In this section we shall solve both problems.
First we will exhibit an infinite number of abstract complex K3 surfaces satisfying condition $(ii)$ of Theorem \ref{Thm:Beau}.
We identify the Enriques involution based on the base change technique from Section \ref{s:ell}.
Then we will work out an explicit example over $\Q$.
Independently an example over $\Q$ was found by Garbagnati and van Geemen  \cite{GvG}.

\subsection{The K3 surfaces}
\label{ss:K3's}

Consider a complex K3 surface $X$ with 
\begin{eqnarray}
\label{eq:M-N}
\NS(X) = U + 2E_8(-1) + \langle -4M\rangle + \langle -2N\rangle
\end{eqnarray}
where $M,N\in\N$ and $N>1$ is odd.
This lattice admits an obvious primitive embedding into the K3 lattice, 
so the existence of some K3 surface $X$ with the specified N\'eron-Severi lattice is clear.
Since $\NS(X)$ has rank $20$,
$X$ is a so-called singular K3 surface and thus defined over some number field (cf.~\cite{SI}).
Note that $\NS(X)$ need not determine $T(X)$ and hence $X$ uniquely.
In fact, it is known that the transcendental lattices of $X$ and its Galois conjugates
cover the whole genus that is determined by $\NS(X)$ through the discriminant form (cf.~\cite{N}).
Here one possible transcendental lattice is
\[
T_{M,N} =  \langle 4M\rangle + \langle 2N\rangle.
\]
For the Enriques involution,
one defines a primitive embedding 
\begin{eqnarray}
\label{eq:Pi}
\Pi:
U(2)+E_8(-2)\hookrightarrow\NS(X)
\end{eqnarray}
summandwise as follows.
Write $e_i (i=1,\hdots,8)$ for a basis of $E_8(-2)$ and $e_{i,1}, e_{i,2}$ for the corresponding basis of the two copies of $E_8(-1)$ inside $\NS(X)$.
Then we set
\[
\Pi|_{E_8(-2)}: e_i \mapsto e_{i,1}+e_{i,2} \;\;\; (i=1,\hdots,8).
\]
On the other hand, we denote the usual basis of $U(2)$ by $f, g$ and likewise $f', g'$ for $U\subset\NS(X)$ 
as well as $h'$ for a generator of the orthogonal summand $\langle -4M\rangle$.
Then we can define $\Pi$ on the summand $U(2)$ by
\[
\Pi|_{U(2)} : (f,g) \mapsto (f',f'+2Mg'+h').
\]
Together, this gives the primitive embedding \eqref{eq:Pi}.
The orthogonal complement of the Enriques lattice embedded by $\Pi$ in $\NS(X)$ can be computed summandwise.
One finds that 
\[
(U(2)^\bot\subset U+\langle-4M\rangle) = \langle 2f'+h'\rangle = \langle-4M\rangle.
\]
In total, we deduce
\[
\NS(X)\supset(U(2)+E_8(-2))^\bot = E_8(-2) +  \langle -4M\rangle + \langle -2N\rangle.
\]
By the assumption $N>1$, the orthogonal complement of the Enriques lattice contains no $(-2)$-vectors.
The primitive embedding $\Pi$ thus determines an Enriques involution $\tau$
on the complex K3 surface $X$.
Here the summand $\langle -2N\rangle$ lies inside the orthogonal complement of the image of $\Pi$,
thus in the $\tau^*$-anti-invariant part.
In other words, the generator $D\in\NS(X)$ of this orthogonal summand satisfies
\[
 D^2=-2N \;\; \text{  and } \;\;  \tau^*D=-D.
\]
Thus condition $(ii)$ of Theorem \ref{Thm:Beau} is satisfied,
and we obtain an Enriques surface $Y=X/\tau$ whose Brauer group pulls back trivially to $X$.

\begin{Remark}
In \cite{HS} we will show that $X$ admits a model over the Hilbert class field $H(-8MN)$
such that the Enriques involution is also defined over the same field.
Thus the same holds for the Enriques surface $Y$.
\end{Remark}

\subsection{The Enriques involution}
\label{ss:ell-M-N}

In this subsection, we will show that the involution $\tau$ has exactly the base change type of Section \ref{s:ell}.
Indeed, the singular K3 surface $X$ arises as a member of the one-dimensional family $\mathcal X$ with
\[
\NS(\mathcal X) = U+ 2E_8(-1)+\langle-4M\rangle
\]
for the given $M$ appearing in Proposition \ref{Prop:M}.
Recall that  $\mathcal X$ is connected to the rational elliptic surface $S$ with singular fibres $II^*, I_1, I_1$ (and $\MW(S)=\{0\}$) through a family of quadratic base changes with deck transformation $\imath$.
Here the base changes are set up in such a way that they endow the pull-back family of $S$
with a section $P$ of height $h(P)=4M$ that does not meet the zero section in the fixed fibres of $\imath$
(but they intersect outside the fixed fibres so that $P\cdot O=2M-2$).
This section is induced from a section $P'$ on the family of Nikulin quotients $\mathcal X'=\mathcal X/\jj$ where $\jj=\imath\circ(-1)$.

On a general member of the family $\mathcal X$, Proposition \ref{Prop:M} tells us that
 $P\cap O\cap\mbox{Fix}(\tau)=\emptyset$.
It remains to check this on the special member $X$.
For $M=1$, this follows from the height $h(P)=4$, since due to the absence of correction terms $P\cdot O=0$.
In general, one can argue with the section $P'$ on $X'$.
The explicit argument will be worked out in \cite{HS}, so we only give a rough idea.
The Shioda-Inose structure predicts the transcendental lattice $T(X')=T(X)(2)$.
Then a similar reasoning as in \ref{sss:even} allows us to deduce the claim from the discriminant form.

Hence $\tau=\imath\circ(\boxminus P)$ defines an Enriques involution as in \ref{ss:inv} on the general member of the family $\mathcal X$ and on the special member $X$.
On the special member $X$ of the family $\mathcal X$ 
there is an independent section $Q$ induced from $X'=X/\jj$.
In the present situation, the height $h(Q)$ equals $2N>2$.
By \ref{ss:MWL}, this implies $Q\cdot O=N-2$.
Since the singular fibres of type $II^*$ have unimodular root lattices, they involve no correction terms.
Hence on $\MW(X)$, the orthogonal projections $\varphi$ with respect to $U$ and $\psi$ with respect to $\Triv(X)$ coincide.
We obtain that
\begin{eqnarray}
\label{eq:P,Q}
\psi(P) = P-O-2MF,\;\;\; \psi(Q) = Q-O-NF,
\end{eqnarray}
and these divisors are mutually orthogonal in $\NS(X)$.
By construction, $\psi(Q)^2=-h(Q)=-2N$.

\begin{Lemma}
\label{Lem:psi-tau}
$\psi(Q)$ is anti-invariant for $\tau^*$: $\tau^*\psi(Q)=-\psi(Q)$ in $\NS(X)$.
\end{Lemma}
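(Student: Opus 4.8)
The plan is to compute $\tau^*\psi(Q)$ explicitly in $\NS(X)$ from the description $\tau=\imath\circ(\boxminus P)$, exploiting that the elliptic fibration \eqref{eq:Inose} on $X$ has only the two fibres of type $II^*$ as reducible fibres. Since a fibre of type $II^*$ has trivial component group, every section meets its identity component, and the hyperelliptic involution $(-1)$, the deck transformation $\imath$, and every translation $t_R$ by a section $R$ all fix each fibre component as a divisor class. This is what makes the Mordell--Weil bookkeeping clean: there are no local correction terms, so $\psi$ is the genuine orthogonal projection used in \eqref{eq:P,Q} and restricts to a homomorphism on $\MW(X)$, and one has the \emph{exact} identities $(-1)^*[R]=[\boxminus R]$ and $t_R^*[R']=[R'\boxminus R]$ in $\NS(X)$, together with $t_R^*F=F=\imath^*F$.

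First I would record the numerology. From $h(P)=4M$, $h(Q)=2N$ and the absence of correction terms one gets $P\cdot O=2M-2$, $Q\cdot O=N-2$, hence $\psi(P)=P-O-2MF$ and $\psi(Q)=Q-O-NF$ as in \eqref{eq:P,Q}. Additivity of $\psi$ on $\MW(X)$ together with the stated orthogonality $\psi(P)\cdot\psi(Q)=0$ gives $\psi(P\boxplus Q)^2=-4M-2N$, hence $(P\boxplus Q)\cdot O=2M+N-2$, and therefore $[P\boxplus Q]=P+Q-O$ in $\NS(X)$.

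Next I would compute $\imath^*\psi(Q)$. Writing $\imath=\jj\circ(-1)$ and using that the sections $P,Q$ are induced from $X'=X/\jj$, hence fixed by $\jj$ as divisor classes (they avoid the indeterminacy locus, as discussed in \ref{ss:ell-M-N}), we get $\imath^*[R]=(-1)^*[R]=[\boxminus R]$ for $R\in\{O,P,Q\}$. Since $(\boxminus R)\cdot O=R\cdot O$, this yields $\imath^*O=O$, $\imath^*F=F$ and $\imath^*\psi(Q)=[\boxminus Q]-O-NF=\psi(\boxminus Q)=-\psi(Q)$. Now applying $t_{\boxminus P}^*$ (recall $\tau^*=(\boxminus P)^*\circ\imath^*$) and using $t_{\boxminus P}^*[O]=[P]$, $t_{\boxminus P}^*[Q]=[Q\boxplus P]=P+Q-O$, $t_{\boxminus P}^*F=F$:
\[
\tau^*\psi(Q)=t_{\boxminus P}^*\bigl(\imath^*\psi(Q)\bigr)=t_{\boxminus P}^*\bigl(O+NF-Q\bigr)=[P]+NF-[Q\boxplus P]=O+NF-Q=-\psi(Q),
\]
which is the claim.

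The step needing the most care --- and essentially the only subtlety --- is justifying that all the involutions and translations in sight act without fibre-component corrections, i.e.\ that the identities above hold in $\NS(X)$ and not merely modulo $\Triv(X)$; this is precisely where the hypothesis that the reducible fibres have type $II^*$ (trivial component group) is used. One also needs that $P$ and $Q$ are $\jj$-invariant as divisor classes, which follows from their construction as pull-backs of sections on $X'$ avoiding the singular points of $X/\jj$. With these two points secured, the remainder is linear algebra in $\NS(X)$.
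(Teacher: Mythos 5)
Your proof is correct and follows essentially the same route as the paper's: both arguments reduce to computing the action of $\tau$ on the sections $O,P,Q$ and invoking the homomorphism property of $\psi$ on $\MW(X)$, which holds exactly (not just rationally) because the only reducible fibres are of type $II^*$ and hence contribute no correction terms. The only cosmetic difference is that you factor $\tau^*=t_{\boxminus P}^*\circ\imath^*$ and record the exact identity $[P\boxplus Q]=P+Q-O$ as an intermediate step, whereas the paper evaluates $\tau$ on the sections directly and then rewrites $[P\boxminus Q]$ via $\psi$; the underlying bookkeeping is identical.
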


\begin{proof}
Abstractly, the anti-invariance is clear since the invariant sublattice sits inside $\Triv(X)+\langle P\rangle$ (see below for the action of $\imath$), and since $\psi(Q)$ is orthogonal to that whole overlattice.
To see the lemma explicitly, we compute the action of $\tau$ on sections.
Recall that we distinguish addition and subtraction of sections with respect to the group law in $\MW(X)$,
denoted by $\boxplus, \boxminus$, and of divisors in $\NS(X)$, denoted as usual by $\pm$.
We use sections in $\MW(X)$ and the corresponding divisors in $\NS(X)$ interchangeably.
\[
\tau(O)= P,\;\;\; \tau(P)=O,\;\;\; \tau(Q) = \imath(Q\boxminus P) = (P\boxminus Q)\in\MW(X).
\]
Hence we see that
\begin{eqnarray}
\label{eq:psi-tau'}
\tau^*\psi(Q) = (P\boxminus Q) - P - NF \;\; \text{ in } \;\NS(X).
\end{eqnarray}
By orthogonality, $h(P\boxminus Q)=h(P)+h(Q)=2M+N$, so 
\begin{eqnarray}
\label{eq:P-Q}
\psi(P\boxminus Q) = (P\boxminus Q) - O - (2M+N)F
\end{eqnarray}
by \ref{ss:MWL}.
Since $\psi$ is a group homomorphism, we can transform \eqref{eq:psi-tau'} with the above relations \eqref{eq:P,Q}, \eqref{eq:P-Q} to obtain:
\begin{eqnarray*}
\label{eq:psi-tau-2}
\tau^*\psi(Q) & = & (\psi(P\boxminus Q) + O + 5F) - (\psi(P) + O + 2F) - 3F\\
& = & \psi(P\boxminus Q)-\psi(P) =  \psi(\boxminus Q) = -\psi(Q).
\end{eqnarray*}
This proves Lemma \ref{Lem:psi-tau} directly.
\end{proof}

\subsection{An explicit singular K3 surface}
\label{ss:-24}

We conclude by producing an explicit example over $\Q$.
We consider the special case 
\[
M=1, N=3.
\]
In this setting, 
$\NS(X)$ has discriminant $-24$, and  the transcendental lattice $T(X)$ has the same discriminant.
By class group theory, there are only two even positive-definite quadratic forms with this discriminant:
$T_{3,1}, T_{1,3}$.
Since they are distinguished by their discriminant forms,
the discriminant form of $\NS(X)$ asserts that
\[
T(X) = T_{1,3} = \langle 4\rangle + \langle 6\rangle.
\]
By the Torelli theorem for singular K3 surfaces \cite{SI},
this determines $X$ uniquely up to isomorphism.
By \cite{S-fields}, $X$ has a model over the Hilbert class field $H(-24)=\Q(\sqrt{2},\sqrt{-3})$.
We will establish a model with the following properties:

\begin{Lemma}
\label{Lem:XX}
The singular K3 surface $X$ admits an elliptic fibration over $\Q$ with two fibres of type $II^*$ 
as in \ref{ss:ell-M-N}
such that the section $P$ is also defined over $\Q$.
Thus the same holds for the Enriques involution $\tau=\imath\circ(\boxminus P)$ and the Enriques surface $Y=X/\tau$.
\end{Lemma}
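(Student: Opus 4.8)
The plan is to produce an explicit Weierstrass equation of the shape \eqref{eq:Inose},
\[
X:\;\;\; y^2 = x^3 + At^4x + t^5(t^2+Bt+1),
\]
with $A,B\in\Q$, carrying two fibres of type $II^*$ at $t=0,\infty$, together with the involution $\imath\colon(x,y,t)\mapsto(x/t^4,y/t^6,1/t)$ of \ref{sss:even}, and then to pin down $A,B$ by the requirement that the surface has the right transcendental lattice $T(X)=\langle 4\rangle+\langle 6\rangle$ and that the extra section $P$ (of height $h(P)=4$, coming from the $M=1$ family $\mathcal X$) is rational. Concretely, I would start from the rational elliptic surface $S$ with fibres $II^*,I_1,I_1$ and $\MW(S)=\{0\}$, write the quadratic twist $X'$ of $S$ along a base change ramified at the $II^*$ fibre, and look for the section $P'$ on $X'$ of height $h(P')=2$ meeting both $I_0^*$ fibres at non-identity components (as in \ref{sss:even}); pulling $P'$ back to $X$ gives $P$. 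Writing $P=(g(t),h(t))$ with $\deg g\le 2$, $\deg h\le 3$ (the condition $P\cdot O=0$ for $M=1$, cf.\ the Lemma in \ref{ss:Ex-M=1}) and substituting into the Weierstrass equation gives a system of polynomial equations in $A,B$ and the coefficients of $g,h$; I expect this to have a one-parameter family of solutions over $\Q$, corresponding to the moduli of the family $\mathcal X$.

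Next I would impose $\rho(X)=20$ by forcing a further section $Q$ of height $h(Q)=2N=6$, i.e.\ $Q\cdot O=N-2=1$, orthogonal to $\psi(P)$ (this is the section appearing in \ref{ss:ell-M-N} and Lemma \ref{Lem:psi-tau}). Demanding that such a $Q$ exist cuts the one-parameter family $\mathcal X$ down to finitely many points; among those I would select the one defined over $\Q$. To verify that the resulting surface is the correct $X$, I would compute $\NS(X)$: it is generated by $O$, $F$, the components of the two $II^*$ fibres, and $P,Q$, giving $\rho=20$ and, by the Shioda–Tate formula together with the heights $h(P)=4$, $h(Q)=6$ and the orthogonality of $\psi(P),\psi(Q)$, discriminant $-24$. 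The transcendental lattice then lies in the genus of $\langle 4\rangle+\langle 6\rangle$, and since—as noted in \ref{ss:-24}—the discriminant form of $\NS(X)$ distinguishes $T_{1,3}$ from $T_{3,1}$, we get $T(X)=\langle 4\rangle+\langle 6\rangle$, hence $X$ is the desired singular K3 surface by the Torelli theorem for singular K3 surfaces \cite{SI}. Finally, the Enriques involution $\tau=\imath\circ(\boxminus P)$ is defined over $\Q$ because $\imath$ and $P$ are, and \ref{ss:ell-M-N} (with the argument for the special member) guarantees $P\cap O\cap\mathrm{Fix}(\imath)=\emptyset$, so $\tau$ is fixed point free; therefore $Y=X/\tau$ is an Enriques surface over $\Q$.

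The main obstacle I anticipate is the explicit solution of the Diophantine system: guaranteeing that the two simultaneous conditions ``$P$ of height $4$ over $\Q$'' and ``$Q$ of height $6$ over $\Q$'' can be met over $\Q$ rather than merely over $H(-24)$. A priori the Torelli theorem only gives a model of $X$ over the Hilbert class field $H(-24)=\Q(\sqrt2,\sqrt{-3})$ (as recalled from \cite{S-fields}), so the content of the lemma is precisely that the Galois descent obstruction vanishes for this particular elliptic fibration and this section. I would handle this by exploiting that the fibration of type $[II^*,II^*]$ with its zero section is already rigid enough to be defined over $\Q$ (Tate's algorithm normal form), and then checking directly that the finitely many candidate surfaces include one for which the polynomials defining $P$ (and $Q$) have rational coefficients—this is the step where one genuinely has to ``grind'' a little, exhibiting the numbers $A,B$ and the coordinates of $P$ explicitly and checking the identities. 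Once such a model is written down, the remaining verifications (fibre types via the discriminant $\Delta(X)=16(4A^3t^{12}+27t^{10}(t^2+Bt+1)^2)$ and $j$-invariant, heights via \ref{ss:MWL}, the lattice computation, and fixed-point-freeness of $\tau$) are routine.
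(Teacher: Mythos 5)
Your overall strategy is the same as the paper's (explicit $[II^*,II^*]$ fibration, a rational section $P$ of height $4$, a further section $Q$ of height $6$ forcing $\rho=20$, then discriminant forms and the Torelli theorem for singular K3 surfaces to identify $X$), but the verification step you build it on contains a genuine error. You propose to locate the right member of the one-parameter family by demanding that \emph{both} $P$ and $Q$ ``have rational coefficients'', and you describe the key difficulty as meeting ``$P$ of height $4$ over $\Q$'' and ``$Q$ of height $6$ over $\Q$'' simultaneously. This is impossible: as the paper points out in the Remark following the Lemma, $\NS(X)$ must carry a non-trivial Galois action over $\Q$ (by \cite{S-NS}, since the class group of discriminant $-24$ is non-trivial), so $P$ and $Q$ cannot both be defined over $\Q$; in the actual example the $y$-coordinate of $Q$ involves $\sqrt{-3}$. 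Consequently the Diophantine system you set up (all coefficients of $P$ and $Q$ in $\Q$) has no solution, and your program as written terminates without producing the example. The statement survives because it only requires $P$ (hence $\imath$, $\tau$ and $Y$) to be rational; the correct fix is to require only the parameters and $P$ to lie in $\Q$ and to detect the $\rho=20$ specialisation differently --- the paper does this by recognising the parametrising curve as the Fricke modular curve $X_0(2)/w_+$, picking out its rational CM points from \cite{ES-19}, and then computing $Q$ over $\Q(\sqrt{-3})$ by a $p$-adic Newton iteration.

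Two further points of comparison. First, your Ansatz $P=(g(t),h(t))$ with $\deg g\le 2$, $\deg h\le 3$ is inconsistent with the Inose form: the right-hand side $g^3+At^4g+t^5(t^2+Bt+1)$ then has an uncancellable $t^7$ term, while $h^2$ has degree at most $6$. For a section with $P\cdot O=0$ on the K3 one needs $\deg g=4$ (with $g(0)\neq 0$) and $\deg h\le 6$; the degrees you quote belong to the rational elliptic surface setting of \ref{ss:Ex-M=1}. Second, the paper avoids solving for $P$ altogether: it starts from the Barth--Peters model with two $III^*$ fibres, a two-torsion section and an $I_4$ fibre, and extracts the $[II^*,II^*]$ fibration by a change of elliptic parameter, so that $P$ arises as a leftover component of the $I_4$ fibre and is manifestly $\Q(a,u)$-rational. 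That geometric derivation is what makes the rationality of $P$ transparent, whereas in your approach it has to be extracted from an explicit (and, as set up, unsolvable) polynomial system.
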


\begin{Remark}
For the above model of $X$ over $\Q$, 
the section $Q$ can only be defined over $\Q(\sqrt{-3})$.
In the present situation, it is impossible for the sections $P$ and $Q$ to be defined over $\Q$ simultaneously.
This follows from \cite{S-NS}
since 
$\NS(X)$ has to admit a non-trivial Galois action by some quadratic subfield 
of the Hilbert class field $H(-24)=\Q(\sqrt{2},\sqrt{-3})$.
\end{Remark}

We shall exhibit $X$ by locating it inside the family $\mathcal X$ from \ref{ss:M=1}.
The arguments are motivated by the results of the  paper \cite{ES-19},
so we will not give all the details here.
Recall the model related to the Barth-Peters family as an elliptic fibration with two fibres of type $III^*$ and two-torsion section $(0,0)$:
\[
\mathcal X:\;\; y^2 = x^3 + t^2 x^2 + t^3 (t-a)^2 x.
\]
Here we have merged the two other reducible fibres of type $I_2$ in the Barth-Peters family to one fibre of type $I_4$ by choosing $b=a$.
The following figure shows how the parameter $u=x/(t-a)$ extracts two fibres of type $II^*$ from the $III^*$ fibres extended by torsion sections and $I_4$ components:

\begin{figure}[ht!]
\setlength{\unitlength}{.45in}
\begin{picture}(5,5)(-0.5,-0.5)

\thinlines

\multiput(0,0)(1,0){5}{\circle*{.1}}
\multiput(0,4)(1,0){5}{\circle*{.1}}
\multiput(0,1)(0,1){3}{\circle*{.1}}
\multiput(4,1)(0,1){3}{\circle*{.1}}
\multiput(2,.5)(0,1){4}{\circle*{.1}}
\multiput(1.5,2)(1,0){2}{\circle*{.1}}

\multiput(2,1.5)(0,1){2}{\circle{.2}}

\put(0,0){\line(1,0){4}}
\put(0,4){\line(1,0){4}}
\put(0,0){\line(0,1){4}}
\put(4,0){\line(0,1){4}}

\multiput(0,2)(2.5,0){2}{\line(1,0){1.5}}
\multiput(2,0)(0,3.5){2}{\line(0,1){.5}}

\multiput(1.5,2)(.5,.5){2}{\line(1,-1){.5}}
\multiput(1.5,2)(.5,-.5){2}{\line(1,1){.5}}

\put(-.35,2){$O$}
\put(4.1,2){$(0,0)$}



\thinlines
\put(-.5,2.5){\line(1,0){2.25}}
\put(1.75,2.5){\line(0,-1){1.75}}
\put(1.75,.75){\line(1,0){2.5}}
\put(4.25,.75){\line(0,-1){1}}
\put(-.5,2.5){\line(0,-1){2.75}}
\put(-.5,-.25){\line(1,0){4.75}}

\put(2.25,1.5){\line(1,0){2.75}}
\put(2.25,3.25){\line(0,-1){1.75}}
\put(-.25,3.25){\line(1,0){2.5}}
\put(5,4.25){\line(0,-1){2.75}}
\put(-.25,4.25){\line(0,-1){1}}
\put(-.25,4.25){\line(1,0){5.25}}

\end{picture}
\caption{Two divisors of Kodaira type $II^*$ supported on fibre components and torsion sections}
\label{Fig:M=1}
\end{figure}
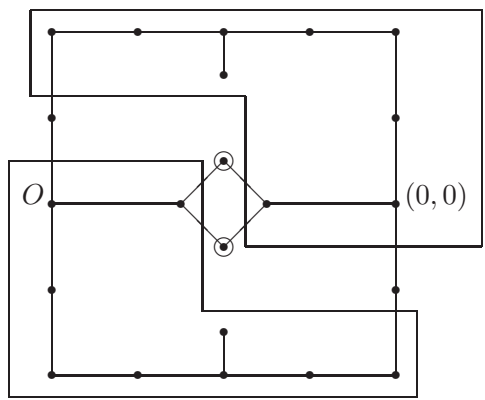

Here the remaining two components of the $I_4$ fibre
(indicated by circles in the figure) form disjoint sections for the new fibration -- zero section and a section $P$ of height $h(P)=4$.
Standard transformations lead to the following Weierstrass form in $t,y$:
\[
\mathcal X:\;\; 
y^2 = t^3+(9a-1)t/9+\left(27\left(u-\frac{a^3}u\right)+81a+2\right)/27.
\]
For the above model, the section $P$ is $\Q(a,u)$-rational with $t$-coordinate 
\[
P_t(u) = (3  u^4+12  u^3 a+6  u^2 a^3+4  u^2 a^2-12  u a^4+3 a^6)/(12 a^2u^2).
\]
One checks that $P\cdot O=0$.
We choose this non-integral model of $\mathcal X$ because the deck transformation $\imath$ is simply given by
\[
\imath: (t,y,u) \mapsto (t,y,-a^3/u).
\]
To determine the specialisations of $\mathcal X$ over $\Q$ with $\rho=20$,
it suffices to investigate the parametrising modular curve.
In the present situation, one can argue with Shioda-Inose structures to see 
that this curve is the Fricke modular curve $X_0(2)/w_+$;
for details, the reader is referred to \cite{ES-19} where also 
all CM-points over $\Q$ are listed.
One of them corresponds to discriminant $d=-20$.

It requires some extra work to exhibit the additional section $Q$ of height $h(Q)=6$ that accounts for the increase of the Picard number.
We know that $Q$ is induced from a section $Q'$ of height $h(Q')=3$ on the Nikulin quotient $X'=X/\jj, \jj=\imath\circ(-1)$.
The  section $Q'$ on $X'$ can be computed explicitly with the techniques of \cite{ES}
which combine a point counting sieve (if necessary) with a $p$-adic Newton iteration.
For the parameter $a$ and the $t$-coordinate $Q_t(u)$ of $Q$, we found
\[
a=-\frac 1{144},\;\;\; Q_t\left(\frac{u}{12^3}\right) = -\dfrac{u^6+222u^5+3039 u^4+36740 u^3+3039 u^2+222 u+1}{2^{18}3^{11}u^2(u-1)^2}.
\]
The $y$-coordinate of $Q$ involves a square root of $-3$,
and conjugation 
inverts $Q$ in $\MW(X)$.
The above representation indicates the invariance of $Q$ under the Nikulin involution $\jj(t,y,u)=(t,-y,12^6/u)$.
Note that $Q$ and $O$ intersect only in the fixed fibre at $u=12^3$,
while $P, Q$ intersect transversally in three points:
a non-zero two-torsion point of the fixed fibre at $u=-12^3$ and 
at the roots of the polynomial $20901888u^2-(255744\pm 31104\sqrt{-3})u+7$ (depending on the sign choices in the $y$-coordinates of $P$ and $Q$).



\end{document}